\documentclass{article}
\usepackage[a4paper, total={15cm, 8in}]{geometry}
\usepackage{indentfirst}
\usepackage{graphicx} 
\usepackage{amsmath,amssymb}
\usepackage{amsthm}
\usepackage{bm}
\usepackage{mathrsfs}
\usepackage{xcolor}
\usepackage{float}
\usepackage[utf8]{inputenc}
\usepackage[export]{adjustbox}
\usepackage{wrapfig}
\usepackage{authblk}
\usepackage{booktabs} 
\usepackage{hyperref}
\usepackage{multirow}
\usepackage{ulem}
\usepackage[title]{appendix}
\usepackage{subcaption}
\theoremstyle{remark}
\newtheorem*{remark}{Remark}
\theoremstyle{definition}
\newtheorem{example}{Example}[section]
\newcommand{\RR}{\mathbb{R}}
\newcommand{\Hh}{\mathcal{H}}
\newcommand{\lb}{\left(}
\newcommand{\rb}{\right)}

\def\XXint#1#2#3{{\setbox0=\hbox{$#1{#2#3}{\int}$}
     \vcenter{\hbox{$#2#3$}}\kern-.5\wd0}}

\newtheorem{theorem}{Theorem}[section]
\newtheorem{lemma}[theorem]{Lemma}
\theoremstyle{definition}
\numberwithin{equation}{section}
\newtheorem{definition}{Definition}
\newtheorem{corollary}[theorem]{Corollary}
\newcommand{\hlap}{(-\Delta)^{\frac{1}{2}}}

\title{A preconditioned boundary value method for advection-diffusion equations with half-Laplacian via spectrum doubling}
\author{P. Yuan$^{1,}$\footnote{\href{mailto:p.yuan@uu.nl}{p.yuan@uu.nl (Correspondence)}}}
\author{P.A. Zegeling$^{1,}$\footnote{\href{mailto:P.A.Zegeling@uu.nl}{P.A.Zegeling@uu.nl}}}
\author{Xian-Ming Gu$^{2,}$\footnote{\href{mailto:guxm@swufe.edu.cn}{guxm@swufe.edu.cn}}}
\affil{{\small $^1$ Mathematical Institute, Utrecht University, Budapestlaan 6, 3584 CD Utrecht, The Netherlands\\
$^2$ School of Mathematics, Southwestern University of Finance and Economics, Chengdu 611130, P.R. China}
}
\date{}

\begin{document}

\maketitle
 \begin{abstract}

In this paper, we study an evolution equation that involves a half-Laplacian operator derived from the Riesz fractional Laplacian, combined with a differential operator \(\mathcal{L}\). Using the identity $(-\Delta)^{1/2}=\mathcal H(\partial_x)$, we introduce a Spectrum Doubling (SD) reformulation that transforms the original half-diffusion equation into a first-order doubled system. The reformulated system exhibits stable and unstable spectral branches, and the original half-diffusion dynamics is recovered on a suitable stable invariant subspace characterized by a compatibility condition on the initial condition. The SD reformulation provides a practical numerical advantage: the half-Laplacian is applied only to the initial condition and source term, avoiding repeated evaluation of singular integrals during time marching. For the resulting integer-order system, we develop a Boundary Value Method (BVM) and study a second-order generalized midpoint scheme. We establish its stability and second-order temporal convergence. The fully discrete scheme leads to a large Kronecker-structured linear system, which is solved efficiently by GMRES with a block $\omega$-circulant preconditioner. Under simultaneous diagonalizability of the spatial discretization matrices, the preconditioner can be implemented efficiently through fast discrete transforms. Numerical experiments for three evelutionary models confirm the theoretical convergence results and demonstrate the robustness and efficiency of the proposed method, including in strongly advective regimes. The experiments also show that the approach remains effective when the Hilbert transform is evaluated numerically, and illustrate the applicability of the SD framework to a nonlocal Schr\"odinger-type example.
\end{abstract}
\qquad~\textbf{Keywords:} Half-Laplacian, Hilbert transform, spectrum doubling, Cauchy problem, 

\quad boundary value method

\vspace{1mm}
\quad\textbf{Mathematics Subject Classification } 35R11, 65R15, 65M12, 35Q41, 35Q84


\section{Introduction}
In this paper, we study an evolutionary partial differential equation (PDE) that involves the half-Laplacian operator $\hlap$,  along with an additional differential operator \(\mathcal{L}\). The half-Laplacian, also referred to as the half fractional diffusion operator, can be regarded as a nonlocal generalization of the classical Laplacian. It arises in various physical applications, such as the Peierls-Nabarro model describing dislocations in crystals \cite{crystals}, and in the Benjamin-Ono equation related to hydrodynamics \cite{Benjamin-Ono1, Benjamin-Ono2}. 

More precisely, we consider the initial value problem
\begin{equation}
\left\{
\begin{aligned}
&\partial_t u = -\epsilon (-\Delta)^{\frac12}u + \mathcal L(u) + f(x,t),
\qquad \epsilon>0,\quad (x,t)\in \mathbb R\times[0,T],\\
&u(x,0)=u_0(x),
\end{aligned}
\right.
\label{original_half}
\end{equation}
where we define the half-Laplacian $\hlap$ via the Riesz fractional Laplacian \cite{ten_laplacian}, which is given by the singular integral \cite{ten_laplacian}:
\begin{equation}
\left(-\Delta\right)^{\frac{\alpha}{2}} u(x) = C_1( \alpha)  \text{P.V.} \int_{\mathbb{R}} \frac{u(x)-u(y)}{|x-y|^{1+\alpha}}  dy,~~~\alpha\in(0,2),\label{def_Laplacian}
\end{equation}
with \(C_1(\alpha) = \frac{2^\alpha\Gamma\bigl(\tfrac{1+\alpha}{2}\bigr)}{\sqrt{\pi} \bigl|\Gamma\bigl(-\tfrac{\alpha}{2}\bigr)\bigr|}\) and P.V. indicates the Cauchy principal value of the integral. Throughout this paper, the definition \eqref{def_Laplacian} is used both in the whole-line setting, where $u:\mathbb R\to\mathbb R$, and in the periodic setting, where $u$ is taken to be an $2L$-periodic function on $\mathbb R$. In addition, $\mathcal{L}$ is a spatial differential operator defined as
\[
\mathcal{L} = \delta \partial_x^k, \quad k \in \mathbb{N},
\]
where $\delta$ is a drift or diffusion coefficient. The function \(f(x,t)\) represents a given source term. We focus on the case $k\in\{0,1\}$ in this work. When $k=1$, \eqref{original_half} becomes the Kolmogorov equation with L\'evy flights studied by Caffarelli and Vasseur \cite{caffarelli2010drift}, also known as the fractional Fokker-Planck equation and superimposes a systematic bias or flow onto the jump-driven diffusion, analogous to a prevailing wind carrying a pollutant plume as it disperses \cite{FFP1}. 

Due to the fact that $(-\Delta)^\frac{1}{2}$ is defined via a singular integral on $\RR$, a robust numerical scheme capable of efficiently capturing the behaviour of this nonlocal operator is a challenge. 
In practical numerical schemes, a common way is to truncate the integral \eqref{def_Laplacian}, see \cite{truncation1,truncation2}, where a rapidly decaying function $u$ is often assumed and the case $\alpha=1$ is either excluded or not well-behaved. To avoid truncation, Cuesta et al. \cite{mapping} proposed a coordinate transformation $x = l \cot(s)$ with $l>0$, which maps $\RR$ into the finite interval $s \in [0, \pi]$ and hence avoids truncation of the domain. Other related approaches for approximating the fractional Laplacian, under additional regularity assumptions on $u$, include a nonlocal finite element method that discretizes the weak formulation and assembles a singular double-integral bilinear form \cite{acosta2017fractional}, as well as a rational spectral method that constructs Galerkin and collocation schemes by deriving explicit formulas for the Fourier transforms and fractional Laplacians of the rational basis \cite{tang2020rational}.

In contrast to approaches that compute $\hlap$ directly through singular-integral quadrature, we use the Hilbert transform identity $\hlap=\mathcal H(\partial_x)$ to reformulate the original half-diffusion PDE. This avoids repeatedly evaluating the nonlocal operator during the time-marching process and places the problem into an integer-order framework that is more suitable for our subsequent analysis and numerical discretization. Specifically, we apply $-\epsilon(-\Delta)^{\frac12}$ to both sides of \eqref{original_half}. Using Hilbert transform identity $(-\Delta)^{\frac12}u=\mathcal H(\partial_x u)$,
we obtain
\[
\epsilon^2(-\Delta)^{\frac12}\bigl((-\Delta)^{\frac12}u\bigr)
=
\epsilon^2\mathcal H\partial_x\bigl(\mathcal H\partial_x u\bigr)
=
\epsilon^2\mathcal H^2 \partial_{xx}u
=
-\epsilon^2\Delta u.
\]
Thus, we rewrite the original half-diffusion PDE as a backward wave-type equation, which is also a second-order Cauchy problem \cite{acp_book}, and, equivalently, as a first-order system in $(u,v)$.
We refer to this procedure as \textit{Spectrum Doubling} (SD), because the reformulated operator exhibits two spectral branches, so that each spectral value of the original generator $-\epsilon\hlap+\mathcal{L}$ corresponds to a doubled spectral structure in the new system. We show that this doubled system splits naturally into stable and unstable branches, and that the original half-diffusion dynamics is recovered by restricting the flow to a stable invariant subspace determined by a compatibility condition on the initial condition. On this subspace, the induced semigroup coincides with the contraction semigroup generated by the original operator. Numerically, the half-Laplacian is then applied only to the initial condition $u_0$ and the source term $f$, while the time-marching itself no longer requires repeated evaluation of a singular integral. Regarding the initial condition, the half-Laplacian of some functions can be calculated explicitly via the Hilbert transform, as listed in Appendix \ref{App_a}. For more general functions, fast approximations are discussed in Section \ref{sec2}.

After implementing the SD reformulation, it is allowed to consider applying some existing numerical scheme aimed at computing the integer-order PDE. However, many common time-integration schemes (e.g., certain Linear Multistep methods (LMMs) and Runge–Kutta methods) can fail to maintain stability for this problem due to the backward term $-\Delta u$. We therefore introduce a Boundary Value Methods (BVMs) as an alternative that generalize LMMs by imposing boundary conditions at both the initial and final steps, thus providing unconditional stability and high-order accuracy, which is well suited to the doubled spectral structure of the semi-discrete system. After outlining the basic BVM framework, we focus on a second-order generalized midpoint scheme, establish its eigenvalue-based stability properties, and prove second-order temporal convergence. The resulting fully discrete problem takes the form of a large Kronecker-structured linear system. To solve it efficiently, we employ a block $\omega$-circulant preconditioner within GMRES, so that each preconditioning step reduces to FFT-based transforms together with a set of independent auxiliary systems, making the overall solver naturally parallelizable. Moreover, when the spatial operators are simultaneously diagonalizable, these systems can be further decoupled in an appropriate transformed spatial basis.

In the numerical section, we test the proposed preconditioned BVM on three representative half-diffusion models: a pure half-diffusion equation, a half-diffusion reaction equation, and an advection-dominated half-diffusion equation. The results confirm the expected second-order convergence and show that the method gives accurate solutions in all cases. Compared with the standard BVM, the preconditioned version has clear advantages for large-scale systems while keeping the same accuracy. By using the simultaneous diagonalization of the spatial matrices, the preconditioner can be applied efficiently through fast discrete transforms. Even in the advection-dominated case, where the discrete system is highly ill-conditioned, the method remains stable, accurate, and efficient. Numerical tests also show that the method still performs well when the Hilbert transforms are computed numerically rather than given in closed form. 

The article is organized as follows. Section \ref{sec2} recalls the definition and basic properties of the half-Laplacian, together with its relation to the Hilbert transform. Section \ref{sec3} introduces the SD reformulation, which converts the original half-diffusion equation into a backward wave-type equation, and establishes the theoretical connection between the reformulated problem and the original dynamics. Section \ref{sec4} presents the numerical framework, including the BVM time discretization, its stability and convergence analysis, and the solution of the resulting Kronecker-structured linear systems with suitable preconditioning strategies. Section \ref{sec5} reports numerical experiments for several representative half-diffusion models. Section \ref{sec6} further illustrates the SD framework through a nonlocal Schr\"odinger example. Finally, Section \ref{sec7} concludes the paper and discusses possible directions for future research.

\section{Preliminaries}
\label{sec2}
In this section, we introduce some fundamental properties of operators and notations which will be used in our study.

\subsection{ The half-Laplacian $\hlap$ }
Among the different ways of defining the fractional Laplacian \cite{ten_laplacian}, we use the singular integral definition \eqref{def_Laplacian} by taking $\alpha=1$, i.e.,

\begin{equation}
    -(-\Delta)^{\frac{1}{2}}u(x) 
    = \frac{1}{\pi}   P.V.\int_{-\infty}^{\infty} \frac{u(y) - u(x)}{(x-y)^2}  dy,\label{def_half_lap}
\end{equation}
If we assume that $u$ is differentiable and bounded (i.e., $u \in C^1_b(\mathbb{R})$) and $\lim_{|x|\to\infty}\frac{du}{dx}(x)=0$, then \eqref{def_half_lap} can also be written as
\[\begin{aligned}
        -(-\Delta)^\frac{1}{2}u(x) 
    &=\frac{1}{\pi}   P.V.\int_{-\infty}^{\infty} \frac{u(y) - u(x)}{(x-y)^2}  dy\\
     &= \lim_{y\to\infty}\frac{u(y)-u(x)}{x-y} + \lim_{y\to-\infty}\frac{u(y)-u(x)}{x-y} + \frac{1}{\pi} P.V.\int_{-\infty}^{\infty} \frac{\frac{d u}{d x}(y)}{y - x}  dy\\
     &=\frac{1}{\pi} P.V.\int_{-\infty}^{\infty} \textcolor{red}{\frac{\frac{d u}{d x}(y)}{y - x}}  dy.
\end{aligned}
\]
which shows that the half-Laplacian is the \textit{Hilbert transform} of the derivative of $u(x)$, namely
\[
    -(-\Delta)^\frac{1}{2}u(x)\equiv -\mathcal{H}\!\Bigl(\tfrac{du}{dx}\Bigr)(x).
\]

\subsection{The Hilbert Transform \texorpdfstring{$\mathcal{H}$}{H}}
The Hilbert transform is an important operator in harmonic analysis and the theory of singular integrals. It is used to study the boundedness of operators on $L^p$ spaces and to investigate the regularity of solutions to partial differential equations. For a function $f(x)\in L^p(\mathbb{R})$ with $1 < p < \infty$, it is defined by
\[
    \mathcal{H}(f)(x)
    = \frac{1}{\pi} P.V.\int_{-\infty}^{\infty} \frac{f(y)}{x - y}   dy, \quad x \in \mathbb{R}.
\]
Its key properties include:
\begin{itemize}
    \item \textbf{Linearity:} For functions $f$ and $g$ and scalars $\alpha, \beta \in \mathbb{R}$,
    \[
      \mathcal{H}(\alpha f + \beta g) = \alpha \mathcal{H}(f) + \beta \mathcal{H}(g).
    \]
    \item \textbf{Inversion Property:} The Hilbert transform is a skew-involution,
    \[
      \mathcal{H}^2 = -\mathcal{I},
    \]
    where $\mathcal{I}$ is the identity operator.
    \item \textbf{Differentiation:} The Hilbert transform commutes with differentiation, so
    \[
      \mathcal{H}\lb\frac{d^nf}{dx^n}\rb(x) = \frac{d^n}{dx^n}\mathcal{H}\bigl(f\bigr)(x), \quad n \in \mathbb{N}.
    \]
\end{itemize}

We list some common functions and their Hilbert transforms in Appendix A. For more general functions without an explicit formula, Weideman \cite{compute_hilberrt} used expansions in rational eigenfunctions of the Hilbert transform combined with the Fast Fourier Transform (FFT) to derive the approximation
\begin{equation}
        \mathcal{H}(f)(x)\approx \frac{1}{1-\mathrm{i}x}\sum_{n=-N}^{N-1}\mathrm{i} \mathrm{sgn}(n) a_n \Bigl(\tfrac{1+\mathrm{i}x}{1-\mathrm{i}x}\Bigr)^n,\label{compute_H}
\end{equation}
where
\[
    a_n=\frac{1}{N}\sum_{j=-N+1}^{N-1}\bigl(1-\mathrm{i} \tan\tfrac{1}{2}\theta_j\bigr) f\bigl(\tan\tfrac{1}{2}\theta_j\bigr) \exp\bigl(\mathrm{i}n \theta_j\bigr),
    \quad \theta_j=\frac{j\pi}{N}, \; |j| < N.
\]
Other approaches include the rectangle rule by Kree \cite{compute_hilbert_2} and a piecewise–linear interpolation method proposed by Bilato et al. \cite{compute_hilbert_3}, which yields a simple $O(N \log N)$ algorithm for the Hilbert transform by exploiting an antisymmetric Toeplitz matrix representation and the discrete trigonometric transform. In the special case where $f$ is periodic with period $2L$, the pseudospectral method with FFT can be directly employed to evaluate $\mathcal{H}(f)(x)$ \cite{compute_hilbert_4}, namely:
\begin{equation}
    \mathcal{H}( f)(x) \approx \sum_{k=-N/2}^{N/2-1} -\mathrm{i}\mathrm{sgn}(k)\hat{f}_k\exp\Bigl(\mathrm{i}\frac{k\pi}{L}x\Bigr),\label{compute_H_periodic}
\end{equation}
where $\hat{f}_k = \frac{1}{N}\sum_{j=0}^{N-1} f(x_j) \exp\Bigl(-\mathrm{i}\frac{k\pi}{L}x\Bigr)$ with $x_j=\frac{jL}{N}$.
\section{Spectrum Doubling Reformulation}
\label{sec3}

Let $\mathbb{T}_{2L}:=\RR/(2L\mathbb{Z})$ and denote $D(\mathcal{A})$ as the domain of an operator $\mathcal{A}$.  We work with the half-Laplacian $(-\Delta)^{\frac12}$ defined on 
\[D((-\Delta)^{\frac12})=\left\{\begin{aligned}
   & H^1(\RR),\ \text{for}\ u\in L^2(\RR),\\
   & H^1(\mathbb{T}_{2L}):= L^2(\mathbb{T}_{2L})\cap H^1(\Omega),\ \text{for}\ u\in L^2(\mathbb{T}_{2L}).
\end{aligned}\right.\]
Assume $u_0\in H^2(\RR)\cap D(\mathcal L)$ and
\[
f\in C\big([0,T]; H^1(\RR)\cap D(\mathcal L)\big)\cap C^1\big([0,T]; L^2(\RR)\big).
\]
We also assume $u$ is a classical solution  of \eqref{original_half} such that
\[
u\in C\big([0,T]; H^2(\RR)\cap D(\mathcal L)\big)\cap C^1\big([0,T]; H^1(\RR)\big)\cap C^2\big([0,T]; L^2(\RR)\big).
\]
It gives that $\hlap u\in C([0,T];H^1(\RR))$, and for every $t\in[0,T]$,
\[
u(\cdot,t),\ \widetilde u(\cdot,t)\in D(\hlap)\cap D(\mathcal L),\qquad
\partial_t u(\cdot,t)\in D(\hlap).
\]
Then consider the original problem
\[    \partial_t u -\mathcal{L}(u) -f  = -\epsilon(-\Delta)^{\frac{1}{2}}u,~~u(x,0) = u_0(x).\]
Applying $-\epsilon(-\Delta)^\frac{1}{2}$  to both sides yields
\[-\epsilon\hlap\lb \partial_t   -\mathcal{L} \rb(u) +\epsilon(-\Delta)^{\frac{1}{2}}f = \epsilon^2(-\Delta)^{\frac{1}{2}}\lb(-\Delta)^{\frac{1}{2}}\rb (u).\]
Under the above assumptions, $-\epsilon(-\Delta)^\frac{1}{2}$ commutes with $\partial_t - \mathcal{L}$, as can be seen by expanding the left-hand side:
\[\partial^2_t u=\epsilon^2(-\Delta)^{\frac{1}{2}}\lb(-\Delta)^{\frac{1}{2}}\rb (u) - \mathcal{L}^2(u)+2\mathcal{L}\lb\partial_t u \rb - \epsilon \mathcal{H}\left(\partial_x f\right)- \mathcal{L}(f) + \partial_t f\]
Following the properties of Hilbert transform $\mathcal{H}$ in section 2.2, we have
\begin{equation}
    \partial^2_t u=-\epsilon^2 \Delta u - \mathcal{L}^2(u)+ 2 \mathcal{L}\left(\partial_t u \right) - \mathcal{L}(f) - \epsilon \mathcal{H}(\partial_x f) + \partial_t f,\label{ds_eq}
\end{equation}
with initial conditions
\begin{equation}
    u(x,0) = u_0(x),\ \partial_t u (x,0) = -\epsilon\mathcal{H}(\partial_x u_0)(x) + \mathcal{L} u_0(x) + f(x,0).\label{ds_eq_initial}
\end{equation}

There is a special case occurs  when $\epsilon$ is a pure imaginary coefficient, for example, the Schrödinger-type equation with the half-diffusion:
\begin{equation}
    \mathrm{i}\partial_t u  = (-\Delta)^{\frac{1}{2}}u + \mathrm{i}\mathcal{L}(u) + \mathrm{i}f(x,t).\label{schrodinger_ds}
\end{equation}
Under the similar regularity assumption of $u(x,t)$ and $f(x,t)$, we have its corresponding doubled equation given by
\begin{equation}
    \partial^2_t u= \Delta u - \mathcal{L}^2(u) + 2 \mathcal{L}\left(\partial_t u \right) - \mathcal{L}(f) - \mathrm{i}\mathcal{H}(\partial_x f) + \partial_t f,\label{ds_schrodinger}
\end{equation}
which is reduced to a standard wave equation if $\mathcal{L}=0$ for the homogeneous case $f=0$.

Following the equation \eqref{ds_eq}, we introduce $v = \partial_t u  - f$, then (\ref{ds_eq}) is split in time, and (\ref{original_half}) is reformulated as a first-order system: 
\begin{equation}
   \left\{\begin{aligned} 
\partial_t u &=v + f, \\
\partial_t v&=-\epsilon^2 \Delta u+2 \mathcal{L}(v)- \mathcal{L}^2(u) +  \mathcal{L}(f) - \epsilon \mathcal{H}(\partial_x f),
\end{aligned}\right.\label{ds_2eqs}
\end{equation}
with 
\begin{equation}
    \left\{\begin{aligned}
   u(x,0)&=  u_0(x),\\
   v(x,0) &= -\epsilon\mathcal{H}(\partial_x u_0)(x) + \mathcal{L} u_0(x).
\end{aligned}\right.\label{ds_2eqs_initial}
\end{equation}
The reformulation of \eqref{original_half} into the first‐order system \eqref{ds_2eqs} is referred to as \textit{spectrum doubling} because it symmetrises the spectrum of the evolution operator \(-\epsilon(-\Delta)^{\frac12}\), as will be analysed in section \ref{sec31}. The idea behind this procedure is natural since we do not want the unknown function \( u \) to be involved in the Hilbert transform. Converting the nonlocal PDE into an integer-order system could simplify the analysis and calculations. 
\begin{remark}
To justify the operator manipulations used in deriving the spectrum-doubling equation, we assume the solutions $(u, v)$ possess sufficient regularity. Specifically, we consider $(u, v) \in H^2(\mathbb{R}) \times H^1(\mathbb{R})$, ensuring that all intermediate steps are well-defined in a classical sense. The numerical discretization does not require this high regularity, as all that is required is to have well-defined initial conditions on the grid and to evaluate the forcing term. This accurately represents the mild evolution of the solution in the natural energy space, $H^1(\mathbb R)\times L^2(\mathbb R)$. 
\end{remark}

\subsection{Doubled spectral system and the stable invariant subspace}

\subsubsection{Second-order abstract Cauchy problem}
\label{sec31}
Suppose $X$ is a Banach space. Let $u: [0, T] \to X$, and denote the time derivative by $' = \frac{d}{dt}$. We define $g = f' - \mathcal{L}(f) - \epsilon\mathcal{H}(\partial_x f)$ and $u_1 = u'(0)$. Then equation \eqref{ds_eq} can be rewritten as
\begin{equation}
u''(t) - 2\mathcal{L}u'(t) - (-\epsilon^2\Delta - \mathcal{L}^2)u(t) = g(t). \label{acp}
\end{equation}
This is a second-order \textit{abstract Cauchy problem} (ACP), which is well-posed $[0,T]$ if it has unique solutions $u$ for a dense set of initial conditions $(u_0,u_1)\in D(-\epsilon^2\Delta - \mathcal{L}^2)\times X$ which depend continuously on $(u_0,u_1)$ \cite{cosine_family_general}. We first consider a special case in which $\epsilon$ is purely imaginary, for example

\begin{equation}
        u''(t) -2\mathcal{L}u'(t) - (\Delta - \mathcal{L}^2)u(t) = g(t).\label{acp_schrodinger}
\end{equation}
This is a damped ACP for (\ref{ds_schrodinger}) and has been widely studied, see  \cite{acp_book,acp2_wellposed_b2a,cosine_family_general,theory_ds_cosine_family} with the following classical conclusions. For $\mathcal{L} = 0$, (\ref{acp_schrodinger}) is a standard second order Cauchy problem. Since $\Delta$ is a generator of a cosine function, Arendt et al. \cite{theory_ds_cosine_family} show that there exists a unique mild solution to (\ref{acp_schrodinger}) for $f\in L^2(\RR)$, expressed as:
\begin{equation} u(t) = C(t)u_0 + S(t)u_1 + \int^t_0 S(t-s)g(s)ds,\ t \geq 0, \label{L0_mild} \end{equation}
where $S(t):=\int_0^t C(s)ds$ is a sine function and $C(t)$ is a cosine function defined by
\begin{definition}\cite{theory_ds_cosine_family}
    A one parameter family $\{C(t)\}_{t\in\RR^+}$ on a Banach space $X$ is called a cosine family if 
    \begin{itemize}
        \item \(C(0) = \mathcal{I}\);
        \item \(2C(t)C(s) = C(t+s)+C(t-s)~\text{for}~t\ge s\ge 0\);\
    \end{itemize}
Moreover, if \(C(\cdot)u:\RR^+\to X\) is continuous for each \(u\in X\), then $C(t)$ is strongly continuous.
\end{definition}

For the nontrivial case $\mathcal{L}\ne 0$, we observe that $\mathcal{L}^2 + (\Delta - \mathcal{L}^2) = \Delta$ generates a strongly cosine family, then Lightbourne \cite{cosine_family_general} states that if $\mathcal{L}$ generates a group $T_s(t)$ and $g(t)$ is Lipschitz in $\RR$ (or in $\mathbb{T}_{2L}$ for the periodic case), then (\ref{acp}) has a unique mild solution:
\begin{equation}
    u(t) = T_s(t)[C(t)u_0 +S(t)(u_1-\mathcal{L}u_0 )] +\int^t_0T_s(t-s)S(t-s)g(s)ds,~t\ge0.\label{mild_general}
\end{equation}
Under the regularities of $u$ in previous, this mild solution is in fact classical and satisfies the original equation (\ref{ds_eq}), see \cite{wellposed_acp2,cosine_family_general}.
 
We will continue the pure imaginary case in section 6. In this study, we mainly focus on the existence of solutions to (\ref{acp}) with $\epsilon \in \mathbb{R}^+$ and follow the above ideas and results to construct the explicit form of solutions. Similarly, we set $\mathcal{L}=0$ and begin with the homogeneous case, i.e., the backward wave equation (BWE):
\[u''(t) = -\epsilon^2\Delta u,\]
where we assume that $u\in H^2(\RR)$ so that $\Delta u\in L^2(\mathbb R)$ and the Fourier transform is well-defined. On the real line $\mathbb R$, the Laplacian has continuous spectrum. Its generalized eigenfunctions are the plane waves
\begin{equation}
    e_\xi(x)=e^{i\xi x},\qquad \xi\in\mathbb R,\label{generalized_eig}
\end{equation}
 satisfying $-\Delta e^{i\xi x}=\xi^2 e^{i\xi x}.$
Taking the Fourier transform in $x$ of the BWE gives
\begin{equation}\label{eq:mode_ode}
\widehat u''(\xi,t)=
\epsilon^2\xi^2 \widehat u(\xi,t),
\end{equation}
for each $\xi\in\mathbb R$. Then, the characteristic equation of \eqref{eq:mode_ode} is
\[r^2-\epsilon^2\xi^2=0
\quad\Longrightarrow\quad
r=\pm \epsilon |\xi|.\]
Therefore the general solution for each Fourier mode is
\begin{equation}\label{eq:mode_general}
\widehat u(\xi,t)=A(\xi)e^{\epsilon |\xi|t}+B(\xi)e^{-\epsilon |\xi|t}.
\end{equation}
Taking the Fourier transforms of the initial condition \eqref{ds_eq_initial} and substituting them into \eqref{eq:mode_ode} gives, for $\xi\neq0$,
\[A(\xi)=\frac12\Bigl(\widehat u_0(\xi)+\frac{-\epsilon |\xi|\widehat u_0(\xi)}{\epsilon |\xi|}\Bigr)=0,
\qquad
B(\xi)=\frac12\Bigl(\widehat u_0(\xi)-\frac{-\epsilon |\xi|\widehat u_0(\xi)}{\epsilon |\xi|}\Bigr)=\widehat u_0(\xi).\]
Hence the exponentially growing term $e^{\epsilon|\xi|t}$ vanishes identically and
\[\widehat u(\xi,t)=\widehat u_0(\xi)e^{-\epsilon |\xi|t}.\]
The resulting solution exactly matches the solution to the half-diffusion equation
\begin{equation}
    \partial_t u = -\epsilon\bigl(-\Delta\bigr)^\frac{1}{2} u,\label{half_diffusion_eq}
\end{equation}
i.e.,
\begin{equation}\label{example_sol}
u(x,t)=\frac1{2\pi}\int_{\mathbb R}\widehat u_0(\xi) e^{i\xi x} e^{-\epsilon |\xi|t} d\xi,
\end{equation}
where \(\widehat u_0(\xi)=\int_{\mathbb R}u_0(x)e^{-i\xi x} dx.\)  Actually,  since the operator $-\epsilon\hlap$ is self-adjoint and dissipative on $L^2(\RR)$, hence it generates a contraction $C_0$-semigroup $e^{-t\epsilon\hlap}$ on $L^2(\RR)$, and \eqref{half_diffusion_eq} has the unique mild solution
\begin{equation}
    u(t)=e^{-t\epsilon\hlap}u_0,\qquad t\ge0.\label{sol_mild_ori}
\end{equation}
Moreover, if $u_0\in H^1(\RR)$, then $u\in C([0,\infty);H^1(\RR))\cap C^1([0,\infty);L^2(\RR))$, see \cite{existence_mild}.

\subsubsection{Stable invariant subspace and induced contraction semigroup}
We observed that the solution \eqref{eq:mode_general} includes two components $A(\xi)e^{\epsilon|\xi|t}$ and $B(\xi)e^{-\epsilon|\xi|t}$, therefore, the BWE defined in $H^1(\RR)\times L^2(\RR)$ is not globally well-posed. However, the decaying branch $e^{-\epsilon|\xi|t}$only remains when we introduce the initial condition $u'(0) = -\epsilon\hlap u(0)$, and this will give an invariant subspace in $H^2(\RR)\times H^1(\RR)\subset H^1(\RR)\times L^2(\RR)$. 

Suppose that $u_0\in H^2(\RR)$ and $u\in C([0,T];H^2(\RR))\cap C^1([0,T];H^1(\RR))$ in the following context. We rewrite the BWE as a first order system as a special homogeneous case in \eqref{ds_2eqs}.  By setting $v(t) = u'(t)$, we split the BWE in time as:
\begin{equation}
    \left\{\begin{aligned}
    u'(t) &= v, \\
    v'(t) &= -\epsilon^2\Delta u.
\end{aligned}\right.\label{int_semigp}
\end{equation}
 Let $\mathscr{A}=\begin{pmatrix}
       0 & \mathcal{I}\\
       -\epsilon^2\Delta & 0
    \end{pmatrix}$, we note that the spectrum of $\mathscr A$ lies on the real axis because the dynamics contains both the
``$\epsilon\hlap$'' and ``$-\epsilon\hlap$'' directions, and $\sigma(\epsilon\hlap)=[0,\infty)$. Then, define 
\[q^+ = v - \epsilon\hlap u,\qquad q^- = v + \epsilon\hlap u,\]
we directly have
\[(q^+)' = -\epsilon\hlap q^+,\qquad (q^-)' = \epsilon\hlap q^-.\]
We have two spectral branches $\pm\epsilon|\xi|$ in $H^1(\RR)\times L^2(\RR)$, which reveals the spectrum doubled compare to the original problem \eqref{original_half}.
\begin{definition}
    Define a set corresponding to $q^-$ by
    \[\mathbb{M}_s:=\{(u,v)\in H^{1}(\mathbb R)\times L^{2}(\mathbb R):\ v=-\epsilon\hlap u\}.\]
\end{definition}
We call this is a stable invariant subspace, which is a closed linear subspace in $H^1(\RR)\times L^2(\RR)$.

\begin{lemma}\label{equivalence}
    Let the initial state $(u(0),v(0))=(u_0,v_0)$. Under the regular assumption of $u$, if the initial state $(u_0,v_0)\in\mathbb{M}_s$, then we have $(u,v)\in \mathbb{M}_s$ for $t\in[0,T]$.
\end{lemma}
\begin{proof}
    Taking the Fourier transform on $(q^-)'$ yields $\widehat {q^-} = e^{\epsilon|\xi|t}\widehat{q^-}(0)$. Since $\widehat{q^-}(0)=0$ then $q^-(t)\equiv0$, and thus, $v=-\epsilon\hlap u$ for all $t$.
\end{proof}

From this Lemma, we directly implies the following theorem.

\begin{theorem}
\label{thm:stable_branch_equivalence}

Let $u$ satisfy the BWE on $[0,T]$ with regularity 
\[u\in C([0,T];H^2(\RR))\cap C^1([0,T];H^1(\RR)),\]
and assume the compatibility constraint $v_0=-\epsilon\hlap u_0$. Then $u$ satisfies the half-diffusion equation \eqref{original_half} on $[0,T]$ in $L^2(\RR)$.
Furthermore, define $\Phi(t)\colon\mathbb{M}_s\to\mathbb{M}_s$ by
\[
\Phi(t)(u_0,v_0):=(e^{-t\epsilon\hlap}u_0, -\epsilon\hlap e^{-t\epsilon\hlap}u_0),
\qquad (u_0,v_0)\in\mathbb{M}_s.
\]
Then $\{\Phi(t)\}_{t\ge0}$ is a $C_0$-semigroup on the closed subspace $\mathbb{M}_s\subset H^1(\RR)\times L^2(\RR)$.
\end{theorem}

\begin{proof}

The representation $u(t)= e^{-t\epsilon\hlap} u(0)$ follows from uniqueness for \eqref{original_half}.

Immediate from $u'=-\epsilon\hlap u$ and strong continuity of $e^{-t\epsilon\hlap}$ on $H^1(\RR)$, we have $\Phi(t)$ preserves $\mathbb{M}_s$ by construction.

\end{proof}
\begin{remark}
We obtain similar results under the assumption $u\in H^2(\mathbb{T}_{2L})\cap H^1(\mathbb{T}_{2L})$ but with eigenvalues since $-\Delta$ has the discrete spectrum on $L^2(\mathbb{T}_{2L})$.
\end{remark}

This theorem equivalently shows that the dynamics of BWE restricted to $\mathbb{M}_s$ coincides with the half-diffusion \eqref{original_half}.  That is, $\mathscr{A}$ induce a contraction semigroup on the stable invariant subspace $\mathbb{M}_s$ via the restricted flow $\Phi(t)$.

\subsection{Consistency between (\ref{original_half}) and (\ref{ds_eq})}
Having established the equivalence between the BWE and the half-diffusion formulation, we now extend this connection to the complete system. More precisely, we show that the original problem \eqref{original_half} and the system \eqref{ds_eq} are equivalent, in the sense that they admit the same solution. For simplicity, we present the argument on $\mathbb R$, and the periodic case on $\mathbb T_{2L}$ follows analogously by replacing the Fourier transform with the Fourier series, so that the continuous frequency $\xi\in\mathbb R$ is replaced by the discrete modes $\xi_n=\frac{n\pi}{L}$$,~n\in\mathbb Z$. In this sense, the periodic problem is the discrete spectral analogue of the whole-line problem. The fundamental solution of \eqref{original_half} is given in Appendix~B.

\subsubsection{Explicit Solution of (\ref{ds_eq})}
Following the idea in \cite{acp2_wellposed, acp2_wellposed_b2a_2}, we assume the solution has the form \eqref{mild_general} since the cosine family \( C(t) \), generated by \( -\epsilon^2\Delta \) and subject to the initial conditions (\ref{ds_eq_initial}), does not lead to exponential growth for \( t > 0 \) from (\ref{example_sol}).  Since $-\epsilon^2\Delta$ is a self-adjoint and densely defined closed operator with $D(-\Delta)=H^{2}(\RR)$, it follows that $\mathscr{A}$ is closed and densely defined on $H^{2}(\RR)\times H^{1}(\RR)$ and generates once integrated semigroup $\mathscr{S}(t)$, see \cite{integrated}. To explicitly express $C(t)$, we define $\mathscr{S}(t)$ in a formal way:
    \[\mathscr{S}(t)=\frac{\exp(t\mathscr{A})-\mathscr{I}}{\mathscr{A}},\]
where $\mathscr{I}=\begin{pmatrix}
    \mathcal{I} & 0\\
    0 & \mathcal{I}
\end{pmatrix}$ and $\mathscr{S}$ is understood by continuous extension at the spectral point 0 (e.g. $\frac{\exp(z)-1}{z}\to 1$ as $z\to 0$). Formally separating the series of \(\exp(t\mathscr{A})\) into its even and odd powers:
\[
\begin{aligned}\exp(t\mathscr{A}) =& \sum_{n=0}^\infty \frac{(t\mathscr{A})^n}{n!} 
    =\sum_{k=0}^\infty\frac{t^{2k}}{(2k)!} \mathscr{A}^{2k} +  \sum_{k=0}^\infty\frac{t^{2k+1}}{(2k+1)!} \mathscr{A}^{2k+1}\\
   =&\sum_{k=0}^\infty\frac{t^{2k}}{(2k)!}
    \begin{pmatrix}
    (\sqrt{-\epsilon^2\Delta})^{2k} & 0 \\
    0 & (\sqrt{-\epsilon^2\Delta})^{2k}
    \end{pmatrix} + \sum_{k=0}^\infty\frac{t^{2k+1}}{(2k+1)!}
    \begin{pmatrix}
    0 & (\sqrt{-\epsilon^2\Delta})^{2k} \\
    (\sqrt{-\epsilon^2\Delta})^{2k+2} & 0
    \end{pmatrix}.
\end{aligned}
\]
Using the fact that $\cos(i\beta X) = \sum^\infty_{k =0}\frac{1}{(2k)!}(\beta X)^{2k}$ and $\sin(i\beta X) = i\sum^\infty_{k =0}\frac{1}{(2k+1)!}(\beta X)^{2k+1}$ we conclude that
\[\mathscr{S}(t)=\frac{\exp(t\mathscr{A})-\mathscr{I}}{\mathscr{A}} = \begin{pmatrix}
    \frac{\sin(i\sqrt{-\epsilon^2\Delta}t)}{i\sqrt{-\epsilon^2\Delta}} & \frac{\cos(i\sqrt{-\epsilon^2\Delta}t)-\mathcal{I}}{{-\epsilon^2\Delta}}\\ \cos(i\sqrt{-\epsilon^2\Delta}t)-\mathcal{I} & \frac{\sin(i\sqrt{-\epsilon^2\Delta}t)}{i\sqrt{-\epsilon^2\Delta}}
\end{pmatrix}.\]  
\begin{theorem}\cite{theory_ds_cosine_family,integrated}
    The operator \(\mathscr{A}\) generates a cosine function $C(t)$ on a Banach space \(X\) if and only if \(\begin{pmatrix}
       0 & \mathcal{I}\\
       \mathcal{A} & 0
    \end{pmatrix}\) generates a once integrated semigroup \(\mathscr{S}\) on \(X \times X\). In that case, \(\mathscr{S}\) is given by
\[
\mathscr{S}(t) =
\begin{pmatrix}
S(t) & \int_0^t S(s)   ds \\
C(t) - \mathcal{I} & S(t)
\end{pmatrix},
\]
where \(S(t) = \int_0^t C(s)   ds. \)
\end{theorem}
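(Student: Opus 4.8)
The plan is to prove the equivalence through the Laplace-transform (resolvent) characterisation of generators of exponentially bounded once-integrated semigroups, because the explicit block form essentially writes itself once the resolvent of the block operator is computed. Throughout, write $\mathcal{A}$ for the operator generating the cosine family (what the statement calls $\mathscr{A}$, and what equals $-\varepsilon^2\Delta$ in our application) and $\mathscr{B} = \begin{pmatrix} 0 & \mathcal{I} \\ \mathcal{A} & 0 \end{pmatrix}$ for the block operator on $X\times X$, with natural domain $D(\mathscr{B}) = D(\mathcal{A})\times X$. I would rely on the standard resolvent identities for a cosine family $C(t)$ with sine family $S(t) = \int_0^t C(s)\,ds$: for $\lambda$ in a suitable right half-plane, $\int_0^\infty e^{-\lambda t}C(t)\,dt = \lambda(\lambda^2 - \mathcal{A})^{-1}$ and $\int_0^\infty e^{-\lambda t}S(t)\,dt = (\lambda^2 - \mathcal{A})^{-1}$.

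For the forward direction, I would define the candidate $\mathscr{S}(t)$ by the stated block matrix and first check the formal prerequisites: $\mathscr{S}(0) = 0$ (since $S(0) = 0$ and $C(0) = \mathcal{I}$), strong continuity in $t$, and exponential boundedness, all inherited directly from $C$ and $S$. The identity that makes the $(2,1)$-entry work is $\int_0^t \mathcal{A}S(s)\,ds = C(t) - \mathcal{I}$; this is immediate on $D(\mathcal{A})$ from $C'(s)x = \mathcal{A}S(s)x$ and extends to all of $X$ because $C(t)-\mathcal{I}$ is bounded and $\mathcal{A}$ is closed. Then I would compute the resolvent of $\mathscr{B}$ by block inversion: solving $\begin{pmatrix} \lambda & -\mathcal{I} \\ -\mathcal{A} & \lambda \end{pmatrix}\begin{pmatrix} a & b \\ c & d\end{pmatrix} = \mathscr{I}$ yields
\[
(\lambda - \mathscr{B})^{-1} = \begin{pmatrix} \lambda(\lambda^2-\mathcal{A})^{-1} & (\lambda^2-\mathcal{A})^{-1} \\ \mathcal{A}(\lambda^2-\mathcal{A})^{-1} & \lambda(\lambda^2-\mathcal{A})^{-1} \end{pmatrix}.
\]
Finally I would Laplace-transform $\mathscr{S}$ entrywise, using $\int_0^\infty e^{-\lambda t}S(t)\,dt = (\lambda^2-\mathcal{A})^{-1}$, the fact that the transform of $\int_0^t S(s)\,ds$ is $\lambda^{-1}(\lambda^2-\mathcal{A})^{-1}$, and the transform of $C(t)-\mathcal{I}$, namely $\lambda(\lambda^2-\mathcal{A})^{-1} - \lambda^{-1}\mathcal{I}$, so that $\lambda\int_0^\infty e^{-\lambda t}(C(t)-\mathcal{I})\,dt = \lambda^2(\lambda^2-\mathcal{A})^{-1} - \mathcal{I} = \mathcal{A}(\lambda^2-\mathcal{A})^{-1}$. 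Matching entries shows $\lambda\int_0^\infty e^{-\lambda t}\mathscr{S}(t)\,dt = (\lambda-\mathscr{B})^{-1}$, which by injectivity of the Laplace transform identifies $\mathscr{B}$ as the generator of the once-integrated semigroup $\mathscr{S}$ and pins down its block form.

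For the converse, I would run the same resolvent computation backwards: if $\mathscr{B}$ generates a once-integrated semigroup, then $(\lambda-\mathscr{B})^{-1}$ exists on a right half-plane, and reading off the $(1,1)$-block forces $(\lambda^2-\mathcal{A})^{-1}$ to exist and to satisfy the Hille--Yosida-type resolvent estimates inherited from those of $\mathscr{B}$ through the block structure. These are exactly the hypotheses of the generation theorem for cosine families, so $\mathcal{A}$ generates a cosine family $C$, and uniqueness then recovers the stated form of $\mathscr{S}$.

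The main obstacle I anticipate is not the algebra but the analytic bookkeeping: one must secure a common right half-plane of exponential convergence for all the Laplace integrals, and, more delicately, verify that the scalar resolvent estimates for $(\lambda^2-\mathcal{A})^{-1}$ and the block estimates for $(\lambda-\mathscr{B})^{-1}$ are genuinely equivalent (not merely that the resolvents agree pointwise), together with the domain and closedness checks ensuring $\mathscr{B}$ is exactly the generator on $D(\mathcal{A})\times X$ rather than a proper restriction or extension. As a cleaner alternative for the forward direction, one could instead verify the integrated-semigroup functional equation $\mathscr{S}(s)\mathscr{S}(t) = \int_0^s\bigl(\mathscr{S}(t+r)-\mathscr{S}(r)\bigr)\,dr$ directly from the cosine addition formula $2C(t)C(s) = C(t+s)+C(t-s)$ and its integrated analogues, avoiding transforms at the cost of heavier block computations.
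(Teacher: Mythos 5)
Your proposal is correct, but note that the paper does not actually prove this statement: the theorem is imported from the cited references, and what surrounds it in the text is only a concrete verification for the specific operator at hand. There, $\mathcal{A}=-\varepsilon^{2}\Delta$ is self-adjoint with strictly positive spectrum, the exponential series $\exp(t\mathscr{A})$ of the block operator is split into even and odd powers to produce the cosine and sine blocks, and the integrated semigroup is then read off as $\mathscr{S}(t)=\mathscr{A}^{-1}\left(\exp(t\mathscr{A})-\mathscr{I}\right)$ --- a manipulation that presupposes $0\in\rho(\mathcal{A})$ and the self-adjoint functional calculus, and which only verifies one direction in one concrete case. Your route is the theorem-level argument, essentially the proof in the cited literature: your block inversion of $\lambda-\mathscr{B}$ is correct, all four entrywise Laplace transforms match (in particular the $(2,1)$-entry computation $\lambda\widehat{(C-\mathcal{I})}(\lambda)=\lambda^{2}(\lambda^{2}-\mathcal{A})^{-1}-\mathcal{I}=\mathcal{A}(\lambda^{2}-\mathcal{A})^{-1}$, and the identity $\mathcal{A}\int_{0}^{t}S(s)\,ds=C(t)-\mathcal{I}$ extended by closedness and density of $D(\mathcal{A})$), so injectivity of the Laplace transform pins down both the generation property and the block form. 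What each approach buys: the paper's series computation produces the explicit operator formulas $\cos(i\sqrt{-\varepsilon^{2}\Delta}\,t)$ and $\sin(i\sqrt{-\varepsilon^{2}\Delta}\,t)/(i\sqrt{-\varepsilon^{2}\Delta})$ that it needs downstream, and makes the ``spectrum doubling'' visible, but it is not a proof of the if-and-only-if statement; your argument works on an arbitrary Banach space, requires no invertibility of $\mathcal{A}$, and handles the converse. Two small cautions on your sketch: the Laplace-transform characterization you invoke requires the once-integrated semigroup to be exponentially bounded (which is the setting of the cited result, and automatic for the forward direction since cosine families are exponentially bounded), and in the converse the transfer of Widder-type derivative estimates from $(\lambda-\mathscr{B})^{-1}$ to its $(1,1)$-block $\lambda(\lambda^{2}-\mathcal{A})^{-1}$ goes through the norm-one coordinate injection and projection --- exactly the bookkeeping you flagged, and it is genuinely where the work lies.
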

Therefore, we derive a cosine function $C(t)=\cos(i\sqrt{-\epsilon^2\Delta}t) = \cosh(\sqrt{-\epsilon^2\Delta}t)$ in form generated by $-\epsilon^2\Delta$ defined on $H^2(\RR)$. It can be checked that the solution of the BWE indeed can be represented in the form \eqref{L0_mild}. 
Furthermore, by applying the variation-of-constants formula for cosine families, we write the solution of \eqref{acp} the form of (\ref{mild_general}), provided that \( T_s(t) \) is a group generated by \(\mathcal{L} \). We will then verify that the mild solution to \(\eqref{ds_eq}\), constructed by the semigroup approach, yields the same evolution as the classical solution to \(\eqref{original_half}\).  We will show this coincidence of the two solutions in specific cases, namely $\mathcal{L} = 0$, $\mathcal{L} = \delta \mathcal{I}$, and $\mathcal{L} = \delta \partial_x $, in the following section.

\subsubsection{Equivalence of Solutions}

\paragraph{Case 1: $\mathcal{L}=0$} We have $T_s(t)=\mathcal{I}$ on $L^2(\mathbb{R})$. Using the standard cosine-family formula, the mild solution can be written as
\begin{equation}\label{R_mild_start}
u(t)
=\cos\!\bigl(i\sqrt{-\epsilon^2\Delta} t\bigr)u_0
+\frac{\sin\!\bigl(i\sqrt{-\epsilon^2\Delta} t\bigr)}{i\sqrt{-\epsilon^2\Delta}}u_1
+\int_0^t
\frac{\sin\!\bigl(i\sqrt{-\epsilon^2\Delta}(t-s)\bigr)}{i\sqrt{-\epsilon^2\Delta}} g(s) ds,
\quad t\in[0,T].
\end{equation}
Using the generalized eigenfunctions $e_\xi$, we have
\[
\cosh(t\sqrt{-\epsilon^2\Delta})e_\xi=\cosh(\epsilon|\xi|t)e_\xi,
\qquad
\sinh(t\sqrt{-\epsilon^2\Delta})e_\xi=\sinh(\epsilon|\xi|t)e_\xi,\qquad \xi\in \RR.
\]
Recalling that \(u_1 = -\epsilon \mathcal{H}(\partial_x u_0) + \mathcal{L}(u_0) + f_0\) with \(f_0 = f(x, 0)\)
we rewrite $u_1,g$ in Fourier variables,
\[
\widehat u_1(\xi)=-\epsilon|\xi|\widehat u_0(\xi)+\widehat f_0(\xi) ~\text{ and }~
\widehat g(\xi,t)=\widehat f'(\xi,t)-\epsilon|\xi| \widehat f(\xi,t),
\]
where 
\(\epsilon \mathcal H(\mathcal D e_\xi)=\epsilon|\xi| e_\xi
\) 
for the mode $e_\xi$, see \cite{ten_laplacian,hilbert_book}.
 By applying \eqref{R_mild_start} on each $e_\xi$ and substituting $\widehat u_1,\widehat g$, we obtain
\[\begin{aligned}
\widehat u(\xi,t)
&=\cosh(\epsilon|\xi|t)\widehat u_0(\xi)
+\frac{\sinh(\epsilon|\xi|t)}{\epsilon|\xi|}\widehat u_1(\xi) 
+\int_0^t
\frac{\sinh(\epsilon|\xi|(t-s))}{\epsilon|\xi|}\widehat g(\xi,s) ds\\
&=\widehat u_0(\xi)\Bigl[\cosh(\epsilon|\xi|t)-\sinh(\epsilon|\xi|t)\Bigr]
+\frac{\sinh(\epsilon|\xi|t)}{\epsilon|\xi|}\widehat f_0(\xi)\\
&\quad
+\int_0^t
\frac{\sinh(\epsilon|\xi|(t-s))}{\epsilon|\xi|}
\Bigl[\widehat f'(\xi,s)-\epsilon|\xi| \widehat f(\xi,s)\Bigr] ds \\
&=\widehat u_0(\xi)e^{-\epsilon|\xi|t}
+\int_0^t e^{-\epsilon|\xi|(t-s)}\widehat f(\xi,s) ds
\end{aligned}\]
where we applied the integration by parts at the last step. Therefore, the explicit solution on $\mathbb R$ is given by 
\begin{equation}
    u(x,t)=\frac{1}{2\pi}\int_{\mathbb R}
\left[
\widehat u_0(\xi)e^{-\epsilon|\xi|t}
+\int_0^t e^{-\epsilon|\xi|(t-s)}\widehat f(\xi,s) ds
\right]e^{i\xi x} d\xi.\label{sol_bwe_heat}
\end{equation}
 Since the multiplier $e^{-\epsilon|\xi|t}$ is exactly the Fourier transform of the Poisson kernel
\[
P_a(x):=\frac{1}{\pi}\frac{a}{a^2+x^2},\qquad a>0,
\]
it satisfies
\[
\widehat{P_a}(\xi)=e^{-a|\xi|}.
\]
Hence \(e^{-t\sqrt{-\epsilon^2\Delta}}\) is precisely the Poisson semigroup on \(\mathbb R\), and thus
\begin{equation}
    u(x,t) = \int_{\mathbb R}P_{\epsilon t}(x-y) u_0(y) dy
      +\int_0^t\int_{\mathbb R}P_{\epsilon(t-s)}(x-y) f(y,s) dy ds.\label{Poisson}
\end{equation}

\paragraph{Case 2: $\mathcal{L} = \delta\mathcal{I}$} A similar analysis to Case 1 applies, but with a different semigroup $\{T_s(t)\}_{t\ge0}$ defined by $T_s(t)u:=e^{\delta t}u$. Therefore, based on \eqref{Poisson}, we have
\[    u(x,t) = e^{\delta t}\int_{\mathbb R}P_{\epsilon t}(x-y) u_0(y) dy
      +\int_0^t e^{\delta(t-s)}\int_{\mathbb R}P_{\epsilon(t-s)}(x-y) f(y,s) dy ds.\]
In the Fourier domain, by applying integration by parts to  $\int^t_0 T_s(t-s)S(t-s)\widehat f'(s)ds$ we obtain
\[T_s(t)S(t)\widehat f_0 + \int^t_0 T_s(t-s)S(t-s)\widehat g(s)ds = \int^t_0e^{(\delta-\epsilon|\xi|)(t-s)}\widehat f(s)ds.\] 
This gives 
\begin{equation}\label{sol_bwe_trans}
    u(x,t) = \frac{1}{2\pi}\int_\RR\left[ \widehat u_0(\xi)e^{(\delta-\epsilon|\xi|)t} + \int^t_0e^{(\delta-\epsilon|\xi|)(t-s)}\widehat f(\xi,s)ds\right] e^{i\xi x} d\xi.
\end{equation}

\paragraph{Case 3:  $\mathcal{L} = \delta\partial_x $} We proceed as in the previous cases, but now we work with the shifted semigroup \(\{T_s(t)\}_{t\ge 0}\), which is defined by  
   \[(T_s(t)u)(x) = u(x + \delta t).\]
 Consequently, the solution in the convolution form is given by
\[    u(x,t) = \int_{\mathbb R}P_{\epsilon t}(x+\delta t-y) u_0(y) dy
      +\int_0^t \int_{\mathbb R}P_{\epsilon(t-s)}(x+\delta (t-s)-y) f(y,s) dy ds.\]
In the Fourier domain, $T_s(t)$ acts as the multiplier $e^{i\delta\xi t}$. The solution then is expressed as
\begin{equation}\label{sol_bwe_adv}
    u(x,t) = \frac{1}{2\pi}\int_\RR\left[\widehat u_0(\xi)e^{(i\delta\xi-\epsilon|\xi|)t} + \int_0^t e^{(i\delta\xi - \epsilon|\xi|)(t-s)} \hat{f}(\xi,s) ds\right]e^{i\xi x}d\xi.
\end{equation}

\section{The time-integration method}
\label{sec4}
Regarding the semi-discretized scheme of the first-order system \eqref{ds_2eqs}, most LMMs fail to maintain stability because the eigenvalues are symmetrically distributed on the real axis. To address this, we introduce a class of numerical methods called BVMs, and provide the corresponding stability conditions along with error estimations. 

\subsection{Quadratic eigenvalue Problem}
We denote by $P$ the discretized form of $-\epsilon^2\Delta-\mathcal{L}^2$ and by $Q$ the discretized form of $2\mathcal{L}$. In this section, $\mathcal{L}$ is discretized by a direct $n$-th order central finite-difference stencil, which we denote by $D_n$.

We first consider the whole-line discretization. Let $x_j = jh$ for $j \in \mathbb{Z}$ define an infinite uniform grid with step size $h > 0$, and let $u(t)=\{u_j(t)\}_{j\in\mathbb{Z}}$ be the vector of grid values. Since $D_n$ is a central stencil of finite width, it can be written in the form
\begin{equation}\label{Dn-stencil}
    (D_n u)_j(t)=\frac{1}{h^n}\sum_{k=-r}^{r} c_k u_{j+k}(t),\qquad j\in\mathbb{Z},
\end{equation}
for some half-width $r\ge1$, where the coefficients satisfy $c_{-k}=(-1)^n c_k$ together with the usual consistency conditions for the $n$-th derivative \cite{bvm_book}. Consequently, on the grid, we have  
\[P = -\epsilon^2 D_2 - \delta D_n ~\text{ and }~ Q = 2\delta D_n.\]
By defining $U=[u,v]^\top$ and $G(t)=\left(\{f(x_j,t)\}_{j\in\mathbb Z}, 
\{(\mathcal L(f)-\epsilon \mathcal H(\partial_x f))(x_j,t)\}_{j\in\mathbb Z}\right)^\top$ as the vector of sampled forcing terms, we obtain the following semi-discrete system of ODEs equivalent to \eqref{ds_2eqs}:
\begin{equation}
    U’(t)=\mathbf{D}U(t)+G(t),\qquad 
    \mathbf{D}=
    \begin{bmatrix}
        0 & I\\
        P & Q
    \end{bmatrix},
    \label{ode_original}
\end{equation}
where $\mathbf{D}$ is an operator on bi-infinite grid functions.

Since $D_n$ is translation-invariant on $\mathbb{Z}$, the spectral distribution of $\mathbf{D}$ is determined by the discrete Fourier symbol. For the Fourier mode $e^{ij\theta}$, $\theta\in[-\pi,\pi]$, we define
\begin{equation}\label{symbols}
    d_n(\theta)=\frac{1}{h^n}\sum_{k=-r}^{r} c_k e^{ik\theta}.
\end{equation}
Then the symbols of $P$ and $Q$ are
\(
p(\theta)=-\epsilon^2d_2(\theta)-\delta d_n(\theta)\) and \(q(\theta)=2\delta d_n(\theta),
\)
and the $2\times2$ symbol matrix of $\mathbf{D}$ is
\[
\widehat{\mathbf{D}}(\theta)=
\begin{bmatrix}
0 & 1\\
p(\theta) & q(\theta)
\end{bmatrix}.
\]
Therefore, the corresponding characteristic equation is
\begin{equation}\label{QEP-symbol}
    \lambda^2-q(\theta)\lambda-p(\theta)=0.
\end{equation}
Hence the two spectral branches are
\begin{equation}\label{lambda-branches}
    \lambda_\pm(\theta)
    =
    \frac{q(\theta)\pm\sqrt{q(\theta)^2+4p(\theta)}}{2}
    =
   \delta d_n(\theta)\pm \sqrt{\delta^2d_n(\theta)^2-\delta d_n(\theta)-\epsilon^2d_2(\theta)},
    \qquad \theta\in[-\pi,\pi].
\end{equation}
Accordingly, the whole-line discrete spectrum is described by the two branch curves
\[
\{\lambda_+(\theta):\theta\in[-\pi,\pi]\}
\cup
\{\lambda_-(\theta):\theta\in[-\pi,\pi]\},
\]
which displays the expected "spectrum-doubling" property.
The central symmetry $c_{-k}=(-1)^n c_k$ implies
$d_n(-\theta)=(-1)^n d_n(\theta) $ and $ \overline{d_n(\theta)}=d_n(-\theta).$
Thus the symmetry of the symbol $d_n(\theta)$ dictates the structural topology of the spectrum in the complex plane: (i). If $n$ is even, then $d_{n}(\theta)$ is a real-valued even function of $\theta$. Consequently,
$\delta^2d_{n}(\theta)^2-\delta d_{n}(\theta)-\epsilon^2d_2(\theta)\in\mathbb{R},$
so the whole-line spectrum is symmetric with respect to the real axis.
(ii). If $n$ is odd, then $d_{n}(\theta)$ is purely imaginary and odd in $\theta$, it follows that $d_n(-\theta) = \overline{d_n(\theta)} = -d_n(\theta)$.  Therefore, the whole-line discrete spectrum is again symmetric with respect to the real axis. Geometrically, the pair of roots $\lambda_\pm(\theta)$ typically traces out two origin-symmetric curves in the complex plane, like a twisted loops ($\infty$-shaped) that closes at the origin as $\theta$ varies continuously over $[-\pi, \pi]$. 
\paragraph{Periodic boundary conditions as a special case.}
The periodic case is obtained by restricting the infinite grid to one period and identifying indices modulo $m$. Then $D_n$ becomes a circulant matrix, so the finite-dimensional block matrix $\mathbf{D}$ is exactly diagonalized by the discrete Fourier basis. Its eigenvalues are therefore obtained by sampling the whole-line branch curves \eqref{lambda-branches} at the discrete Fourier frequencies $\theta_k=\frac{2\pi k}{m}, k=0,1,\dots,m-1.$
That is,
\begin{equation}\label{periodic-eigs}
    \lambda_{k,\pm}
    =
    \delta d_n(\theta_k)\pm\sqrt{\delta^2d_n(\theta_k)^2-\delta d_n(\theta_k)-\epsilon^2d_2(\theta_k)},
    \qquad k=0,1,\dots,m-1.
\end{equation}
Hence the periodic problem is a finite-dimensional exact sampling of the whole-line discrete spectral distribution. 

\paragraph{Large-domain Dirichlet truncation for $u(\cdot,t)\in C_0(\mathbb{R})$.}
In actual computations, one cannot work directly on the infinite grid. Besides extended-periodic approximation, another standard approach is to truncate $\mathbb R$ to a sufficiently large bounded interval and impose homogeneous Dirichlet conditions at the artificial boundaries. 
Such a large-domain Dirichlet truncation should be understood as a finite-dimensional approximation of the original problem on $\mathbb R$.

Let $\Omega=(0,L)$ be a large computational interval with mesh
\(
\overline{\Omega}:0=x_0\le \cdots \le x_m=L.
\)
Using the same central stencil as in \eqref{Dn-stencil}, together with homogeneous Dirichlet closure at the endpoints, we obtain a $2(m-1)\times 2(m-1)$ block matrix $\mathbf D$. This truncation is justified by the locality of the stencil. If the half-width of $D_n$ is $r$, then at a node $x_j$ the discrete operators involve only the values $u_{j-r},\dots,u_{j+r}$. Hence, at every interior node whose stencil is entirely contained in $\Omega$, the truncated discrete operator coincides exactly with the whole-line discrete operator. Moreover, if $u(\cdot,t)\in C_0(\mathbb R)$, then the solution values near the artificial boundaries become arbitrarily small as $L\to\infty$. Therefore, over a fixed finite time interval, the large-domain Dirichlet problem provides a consistent approximation of the whole-line dynamics on the interior region. At the spectral level, the Dirichlet-truncated problem has only finitely many eigenvalues. Since the interior stencil is translation-invariant and the boundary modification affects only finitely many rows, the truncated matrix can be viewed as a finite-section, boundary-perturbed approximation of the whole-line operator. Hence, for large $L$, its eigenvalues are expected to reflect the same leading spectral geometry as the whole-line branches \eqref{lambda-branches}, up to boundary effects and possible outliers. 
\begin{remark}
    If the sampled solution is compactly supported, then on that active region the truncated and whole-line schemes act identically. In this sense, the interior numerical evolution is an exact inheritance of the whole-line discretization. Under the stronger assumption $u(\cdot,t)\in\mathcal S(\mathbb R)$, the approximation is even more accurate because of the rapid decay at infinity.
\end{remark}

As discussed above, the periodic problem provides an exact finite-dimensional sampling of the whole-line spectral branches, whereas for sufficiently large computational domains the Dirichlet-truncated matrix inherits the same leading spectral pattern up to boundary effects. To make this connection explicit, we now consider the simplest finite-dimensional example. It already captures the essential difficulty for time integration: the semi-discrete spectrum is not contained in the left half-plane, and therefore standard IVP solvers based on classical absolute stability regions do not provide unconditional stability.

\begin{example}
    Taking $\epsilon = 1$ with a central discretization for $\Delta$ and setting $\mathcal{L}= 0$, this yields
\[\mathbf{D} = \left(\begin{matrix}
    0 & I  \\
    P & 0
\end{matrix}\right),\ P= -\frac{1}{h^2}\begin{pmatrix}
-2 & 1 & \phantom{0} & \phantom{0} & \phantom{0} \\
1 & -2 & 1 & \phantom{0} & \phantom{0} \\
\phantom{0} & \ddots & \ddots & \ddots & \phantom{0} \\
\phantom{0} & \phantom{0} & 1 & -2 & 1 \\
\phantom{0} & \phantom{0} & \phantom{0} & 1 & -2
\end{pmatrix}, \]
Denote the eigenvalues of $P$ by $\lambda_i^P$. The eigenvalues of $\mathbf{D}$ can be found to be $\lambda_i=\pm\sqrt{\lambda_i^P},\ i = 1,2,...,m$, which are symmetrically located on the real axis. A similar conclusion holds for the periodic discretization, where $P$ is replaced by the corresponding circulant discrete negative Laplacian. Commonly used methods, such as the Backward Differentiation Formula and Runge-Kutta methods, do not yield unconditional stability for this problem \cite{bvm_book}, see Figure \ref{stability_region}. Therefore, we introduce an alternative integration method, that is the \textit{Boundary Value method}. 

\begin{figure}[htbp]
    \centering
    \includegraphics[width=0.4\linewidth]{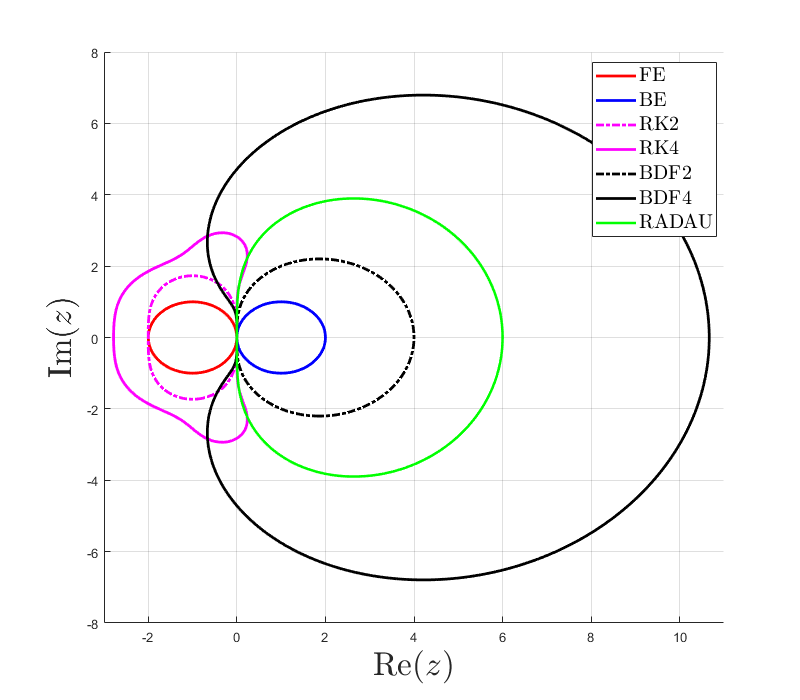}
    \caption{Stability (boundary) locus of the method of explicit$\slash$implicit Euler, RK2, RK4, BDF2, BDF4 and RADAU IIA.}
    \label{stability_region}
\end{figure}
\end{example}

\subsection{Boundary Value method}
BVMs belong to a class of ordinary differential equation (ODE) solvers which can be interpreted as a generalization of LMMs \cite{bvm_book,brugnano1998boundary}. Compared to traditional methods for solving initial value problems (IVPs), BVMs offer the advantages of unconditional stability and high-order accuracy.
Consider the following IVP:
\begin{equation}
    u'(t)=f(u(t),t), \quad u(0)=u_0, \quad t\in[0,T] .\label{ivp}
\end{equation}
\noindent
After discretizing (\ref{ivp}) with a $k$-step LMM, the $k$-conditions are needed to obtain the discrete solution. In BVM, these conditions are imposed to initial and final values of the boundaries, i.e. in this method the continuous IVP (\ref{ivp}) is approximated by means of a discrete boundary value problem. To illustrate this, we discretize the time interval \([0, T]\) into \(N\) equal steps with step size \(\tau = \frac{T}{N}\), defining discrete time points \(u_r \approx u(t_r), t_r=r \tau\) and $f_r=f( u_r,t_r)$, for \(n = 0,1,\dots,N\). Then, the $k$-step formulation of a BVM used to approximate (\ref{ivp}) can be expressed as:
\begin{equation}
   \sum_{r=0}^k \alpha_r u_{r+j}= \tau \sum_{r=0}^k \beta_r f_{r+j}, \quad j=0,1, \ldots, N-k,\label{LMM}
\end{equation}
 The BVM (\ref{LMM}) requires $k_1$ initial conditions and $k_2 = k-k_1$ final conditions, i.e., we need the values of $u_0, u_1, \ldots, u_{k_1-1}$ and $u_{N-k_2+1}, u_{N-k_2+2}, \ldots, u_N$. The initial condition in (\ref{ivp}) provides the value $u_0$. The extra $k_1-1$ initial and $k_2$ final conditions are of the form
\begin{equation}
    \sum_{r=0}^k \alpha_r^{(j)} u_r= \tau \sum_{r=0}^k \beta_r^{(j)} f_r, \quad j=0,1, \ldots, k_1-1,\label{bvm_initial}
\end{equation}
and
\begin{equation}
    \sum_{r=0}^k \alpha_r^{(j)} u_{N-k+r}= \tau \sum_{r=0}^k \beta_r^{(j)} f_{N-k+r} \quad j=N-k_2+1, \ldots, N,\label{bvm_final}
\end{equation}
where the coefficients $\alpha_r^{(j)}$ and $\beta_r^{(j)}$  are chosen such that the truncation errors for the initial and final conditions maintain the same order as that of the primary formula (\ref{LMM}).

Now we derive $N$ equations (\ref{LMM})-(\ref{bvm_final}), which can be represented as 
\[A_e u_e= \tau B_e f_e\left(t_e, u_e\right),\]
where $t_e, u_e \in \RR^{N+1}, A_e, B_e \in \RR^{N \times(N+1)}$ and $f_e=$ $\left(f_0, f_1, \ldots, f_N\right)^\top$.  By partitioning $A_e=\left[a_0, A\right]$ and $B_e=\left[b_0, B\right]$, we isolate the initial condition, allowing us to rewrite the system as: $A_e u_e= \tau B_e f_e\left(t_e, u_e\right)$ as a system for the unknown $u \in \RR^N$ and get
\begin{equation}
    A u= \tau B f(u,t)+F,\label{mat_ivp}
\end{equation}
where $F=-a_0 u_0+ \tau b_0 f( u_0,t_0)$ contains the initial condition. 

In this paper, we consider the following linear system of ODEs, given by:
\begin{equation}
    u'(t)=D u(t)+f(t), \quad u(0)=u_0, \quad t\in[0,T],\label{nonsingular_ode}
\end{equation}
where $u(t)=\left[u_1(t), u_2(t), \ldots, u_m(t)\right]^\top, f(t)=\left[f_1(t), f_2(t), \ldots, f_m(t)\right]^\top$, $D$ is a $m \times m$ matrix, then the BVM for (\ref{nonsingular_ode}) using (\ref{mat_ivp}) can be written as
\begin{equation}
    \left(A \otimes I_m- \tau B \otimes D\right) u= \tau\left(B \otimes I_m\right) f+ \tau \left(b_0 \otimes\left(D u_0+f_0\right)\right)-a_0 \otimes u_0.\label{kron_ode_source}
\end{equation}
where $f_0=f\left(t_0\right)$ and
\[
\begin{aligned}
& u \approx\left[u_1\left(t_1\right), u_2\left(t_1\right), \ldots, u_m\left(t_1\right), u_1\left(t_2\right), u_2\left(t_2\right), \ldots, u_m\left(t_2\right), \ldots, u_1\left(t_N\right), u_2\left(t_N\right), \ldots, u_m\left(t_N\right)\right]^\top, \\
& f=\left[f_1\left(t_1\right), f_2\left(t_1\right), \ldots, f_m\left(t_1\right), f_1\left(t_2\right), f_2\left(t_2\right), \ldots, f_m\left(t_2\right), \ldots, f_1\left(t_N\right), f_2\left(t_N\right), \ldots, f_m\left(t_N\right)\right]^\top.
\end{aligned}
\]

A second-order BVM approximation of (\ref{ivp}) is considered in this paper, which can be obtained by taking $k=2,k_1=1$ \cite{bvm_book}. Since there is only one initial condition and has been given, we choose the midpoint formula as the main method and select another LMM for the final step. We call this method as generalized midpoint method (GMM) which is a symmetric method that has been thoroughly studied in \cite{bvm_book, zegeling_midpoint}. For the initial value problem (\ref{ivp}), the main formulation of the GMM is given by
\begin{equation}
    \frac{1}{2}(u_{j+1}-u_{j-1})= \tau f_{j},\quad j=1,\dots,N-1.\label{mid_main}
\end{equation}
For the choice of the final-step formula, it is common practice to employ a scheme of the same order as the main method. Nevertheless, such a choice is not mandatory. In the BVM setting, the last-point equation affects only one endpoint and therefore enters the discrete problem as a localized perturbation. Provided that the resulting scheme satisfies the required stability conditions, the perturbation generated by a lower-order terminal formula does not compromise the global accuracy of the principal method, see \cite{bvm_book,zegeling_midpoint}. For this reason, we use Backward Euler as the last-point formula:
\begin{equation}
    u_{N}-u_{N-1}= \tau f_{N}.\label{mid_final}
\end{equation}
\noindent
To represent the above second order GMM in matrix form, as in (\ref{mat_ivp}), we specify \( A \), \( B \), \( a_0 \), and \( b_0 \) as follows:
\begin{equation}
    \begin{aligned}
 A=\left(\begin{array}{ccccc}
0 & 1/2 &  & & \\
-1/2 & 0 & 1/2 &  & \\
 & \ddots & \ddots &  \ddots & \\
 & & -1/2 &0 & 1/2 \\
 & &  & -1& 1
\end{array}\right)_{N\times N}
\end{aligned},\ a_0=\left[-\frac{1}{2},0, 0, \ldots, 0\right]^T,\label{A_a}
\end{equation}
\begin{equation}
    B=I_N,\  b_0=0_{N\times1} .\label{B_b}
\end{equation}
\subsubsection{Stability properties}
\begin{definition}
    We say that a polynomial $\rho(z)$ of degree $k = k_1 + k_2$ is a $S_{k_1,k_2}$-polynomial if its roots are such that 
    \[|z_1| \le |z_2| \le \cdots \le |z_{k_1}| < 1 < |z_{k_1+1}|\le |z_k|,\]
    whereas it is a $N_{k_1,k_2}$-polynomial if
    \[|z_1| \le |z_2| \le \cdots \le |z_{k_1}| \le 1 < |z_{k_1+1}|\le |z_k|,\]
    with simple roots of unit modulus.
\end{definition}
By introducing the polynomial
\[\rho(z) = \sum_{j=0}^k\alpha_j z^j,\ \sigma(z) = \sum_{j=0}^k\beta_j z^j,\]
and the shift operator $\mathscr{E}$
\[\mathscr{E} y_n = y_{n+1},\]
We can write (\ref{LMM}) as 
\begin{equation}
    \rho(\mathscr{E})u_j - \tau\sigma(\mathscr{E})f_j = 0.\label{charc}
\end{equation}
Suppose that $f(t, u)$ is sufficiently smooth, by substituting the values of the solution $u(t)$ and expanding (\ref{charc}) at $t = t_j$, we derive that the truncation error is at least $O(\tau^2)$, given the necessary \textit{consistency conditions} \cite{bvm_book} $\rho(1) = 0, \rho'(1) =\sigma(1)$.  Thus, the second-order GMM is clearly consistent.
\begin{definition}
    For a given $q \in \mathbb{C}$, a BVM with $(k_1, k_2)$-boundary conditions is $(k_1, k_2)$-absolutely
stable if $\pi(z, q)= \rho(z)-q\sigma(z)$ is a $S_{k_1, k_2}$~-polynomial. And the region 
\[R_{k_1,k_2} = \{q\in\mathbb{C}: \pi(z,q) \text{ is }S_{k_1,k_2}\},\]
is called $(k_1,k_2)$-absolute region of the $k$-step BVM with the boundary locus, denoted as $\Gamma$, given by
\[\Gamma = \{q(e^{\textbf{i}\theta})\in\mathbb{C}:\theta\in[0,2\pi)\},\text{ with }q(z) = \frac{\rho(z)}{\sigma(z)}.\]
\end{definition}
Then we have the following theorem for the stability of the second order GMM (\ref{mid_main}-\ref{mid_final}):

\begin{theorem}
    (Absolute Stability of the 2nd order GMM) Consider the two-step midpoint method for (\ref{ivp}) given by the difference formula (\ref{mid_main}). This defines a two-step linear multistep method with $(k_1,k_2)=(1,1)$. Furthermore, the $(1,1)$–absolute stability region of this method is
\[R_{1,1} \;=\; \{q\in\mathbb{C} : q \notin [-\mathrm{i},~\mathrm{i}]\}.\]
\end{theorem}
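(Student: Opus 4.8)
The plan is to turn the stability requirement into an explicit root-location problem for a single quadratic, where the special algebraic structure of the midpoint formula makes everything transparent. First I would read off the characteristic polynomials from the main formula (\ref{mid_main}). Reindexing $j\mapsto j-1$ so that the lowest index is zero rewrites (\ref{mid_main}) in the standard form (\ref{LMM}) with coefficients $\alpha_0=-\tfrac{1}{2},\ \alpha_1=0,\ \alpha_2=\tfrac{1}{2}$ and $\beta_0=\beta_2=0,\ \beta_1=1$, so that
\[
\rho(z)=\tfrac{1}{2}(z^2-1),\qquad \sigma(z)=z.
\]
This confirms that the scheme is a genuine two-step method, and together with the single supplied initial condition the natural split is $(k_1,k_2)=(1,1)$.

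Next I would form the stability polynomial $\pi(z,q)=\rho(z)-q\,\sigma(z)=\tfrac{1}{2}(z^2-1)-qz$, equivalently $z^2-2qz-1=0$, whose roots are $z_\pm=q\pm\sqrt{q^2+1}$. The crucial observation is that the product of the roots is $z_+z_-=-1$, so that $|z_+|\,|z_-|=1$ for every $q\in\mathbb{C}$. Being an $S_{1,1}$-polynomial means precisely that one root lies strictly inside and one strictly outside the unit circle; since the product of the two moduli is pinned to $1$, this dichotomy holds if and only if neither root lies on the unit circle. The stability question therefore collapses to identifying the set of $q$ for which $\pi(\cdot,q)$ admits a unimodular root, which is exactly the boundary locus $\Gamma$.

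Finally I would compute $\Gamma$ directly. Setting $z=e^{\mathrm{i}\theta}$ and using $q(z)=\rho(z)/\sigma(z)=\tfrac{1}{2}(z-z^{-1})$ gives
\[
q(e^{\mathrm{i}\theta})=\tfrac{1}{2}\bigl(e^{\mathrm{i}\theta}-e^{-\mathrm{i}\theta}\bigr)=\mathrm{i}\sin\theta .
\]
As $\theta$ sweeps $[0,2\pi)$ this traces exactly the closed segment $[-\mathrm{i},\mathrm{i}]$ on the imaginary axis, so $\pi(\cdot,q)$ has a unit-modulus root if and only if $q\in[-\mathrm{i},\mathrm{i}]$. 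Combined with the previous paragraph this yields $R_{1,1}=\{q\in\mathbb{C}:q\notin[-\mathrm{i},\mathrm{i}]\}$, as claimed.

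The quadratic-root bookkeeping is routine; the only point deserving care is the degenerate endpoints $q=\pm\mathrm{i}$, where the discriminant $q^2+1$ vanishes and a double root lands on the unit circle. I would dispatch these by noting that there $\pi(\cdot,q)$ factors as $(z\mp\mathrm{i})^2$, a repeated unimodular root, which violates the strict inequality $|z_1|<1<|z_2|$ in the $S_{1,1}$ definition; since $\pm\mathrm{i}$ are exactly the endpoints of $[-\mathrm{i},\mathrm{i}]$, they fall into the excluded segment and no separate case analysis is required.
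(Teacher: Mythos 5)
Your proposal is correct and takes essentially the same route as the paper's own proof: both work with the stability polynomial $\pi(z,q)=z^{2}-2qz-1$, use the root-product identity $z_{1}z_{2}=-1$ to force the strict inside/outside dichotomy whenever no root is unimodular, and compute the boundary locus $q(e^{\mathrm{i}\theta})=\mathrm{i}\sin\theta$ to identify $\Gamma=[-\mathrm{i},\mathrm{i}]$. Your explicit derivation of $\rho(z)=\tfrac{1}{2}(z^{2}-1)$, $\sigma(z)=z$ from the difference formula and your separate treatment of the double-root endpoints $q=\pm\mathrm{i}$ are slightly more thorough than the paper's write-up, but they refine rather than alter the argument.
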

\begin{proof}
    Explicitly, $\pi(z,q) = z^2 - 2q z - 1$ is a quadratic in $z$ whose roots $z=z(q)$ determine the amplification factors of the numerical solution. Solving $\pi(z,q)=0$ gives: 
$$z \;=\; \frac{2q \pm \sqrt{4q^2 + 4}}{2} \;=\; q \pm \sqrt{ q^2 + 1 } . $$
Observe that the product of the two roots is $|z_1| |z_2| = |z_1 z_2| = |-1| = 1$ is a constant. This shows that for each fixed \( q \), exactly one root lies strictly inside the unit circle and the other strictly outside, unless both roots have modulus 1, i.e., two roots lie on the unit circle \( |z| = 1 \). To determine those borderline values of \( q \), we set \( z = e^{i\theta} \) in the polynomial \( \pi(z, q) \). Substituting \( z = e^{i\theta} \) into \( \pi(z, q) = 0 \) then yields:
\[e^{2i\theta} - 1 - 2q e^{i\theta} = 0.\]
Solving for $q$ gives 
\[q \;=\; \frac{e^{2i\theta} - 1}{2 e^{i\theta}} \;=\; \frac{e^{i\theta} - e^{-i\theta}}{2} \;=\; i \sin\theta .\]
This shows that the boundary locus $\Gamma=[\mathrm{-i},~\mathrm{i}]$ as $\theta$ ranges over $[0,2\pi)$.
Equivalently, we have shown:  $\pi(z,q)$ has a root with $|z|=1$ if and only if $q$ lies on the line segment $\Gamma$. Therefore, if $q\notin\Gamma$, because $z_1 z_2 = -1$ we indeed have that $\pi(z,q)$ is an $S_{1,1}$-polynomial, and then the method is $(1,1)$–absolutely stable with
\[\displaystyle R_{1,1} \;=\; \{ q \in \mathbb{C} : q \notin [-i, i] \} . \]
\end{proof}

\subsubsection{Error estimation}
Let $\Omega=(0,L)$ be a sufficiently large computational domain. For solutions that are not compactly supported, we introduce a lifting so that the transformed problem satisfies the homogeneous boundary condition $u|_{\partial\Omega}=0$. Applying the second-order GMM to equation \eqref{ds_eq} then yields the following semi-discrete numerical scheme:

For \( n = 1, 2, \dots, N-1 \) (midpoint method):
\begin{equation}
    \left\{
\begin{aligned}
\frac{u^{n+1} - u^{n-1}}{2 \tau} &= v^n + f^n, \\
\frac{v^{n+1} - v^{n-1}}{2 \tau} &= (-\epsilon^2\Delta - \mathcal{L}^2) u^n + 2\mathcal{L} v^n + \mathcal{L}(f^n) - \epsilon \mathcal{H}(\partial_x f^n),
\end{aligned}
\right.\label{numeric_1}
\end{equation}

At \( n = N \) (backward Euler method):
\begin{equation}
    \left\{
\begin{aligned}
\frac{u^{N} - u^{N-1}}{\tau} &= v^{N} + f^{N}, \\
\frac{v^{N} - v^{N-1}}{\tau} &= (-\epsilon^2\Delta - \mathcal{L}^2)u^{N} + 2\mathcal{L} v^{N} + \mathcal{L}(f^N) - \epsilon \mathcal{H}(\partial_x f^N),
\end{aligned}
\right.\label{numeric_2}
\end{equation}

The initial values \((u^{0},v^{0})=(u_{0}(x), v_{0}(x))\) are prescribed with  
\(v_{0}(x)=-\epsilon\mathcal{H}\!\bigl(\partial_x u_{0}\bigr)(x)+\mathcal{L}u_{0}(x)\).
A standard Taylor expansion shows that the interior stencil~\eqref{numeric_1} possesses a local truncation error \(O(\tau^{3})\), whereas the single backward step~\eqref{numeric_2} is \(O(\tau^{2})\) (see, e.g.,\ \cite{bvm_book}).  
Setting \(\mathcal{P}:=-\epsilon^{2}\Delta-\mathcal{L}^{2}\) and defining the nodal errors
\(e_{u}^{ n}=u(t^{n})-u^{ n}\) and \(e_{v}^{ n}=v(t^{n})-v^{ n}\), we obtain
\begin{equation}
\left\{
\begin{aligned}
e_{u}^{ n+1}-e_{u}^{ n-1} &= 2\tau e_{v}^{ n}+r_{u}^{ n},\\
e_{v}^{ n+1}-e_{v}^{ n-1} &= 2\tau\bigl[\mathcal{P}e_{u}^{ n}+2\mathcal{L}e_{v}^{ n}\bigr]+r_{v}^{ n},
\end{aligned}\quad 1\le n\le N-1\right.
\label{error-int}
\end{equation}
together with
\begin{equation}
e_{u}^{ N}-e_{u}^{ N-1}= \tau e_{v}^{ N}+r_{u}^{ N}, 
\qquad
e_{v}^{ N}-e_{v}^{ N-1}= \tau\bigl[\mathcal{P}e_{u}^{ N}+2\mathcal{L}e_{v}^{ N}\bigr]+r_{v}^{ N},
\label{error-end}
\end{equation}
where \(\|r_{u}^{ n}\|+\|r_{v}^{ n}\|\le C\tau^{3}\) for \(1\le n\le N-1\) and  
\(\|r_{u}^{ N}\|+\|r_{v}^{ N}\|\le C\tau^{2}\).  
Homogeneous Dirichlet boundary conditions are inherited by the error functions,
\(e_{u}^{ n}|_{\partial\Omega}=e_{v}^{ n}|_{\partial\Omega}=0\).

\begin{theorem}[Second-order convergence]
Let $\{(u^{n},v^{n})\}_{n=0}^{N}$ be the solution produced by
\eqref{numeric_1}–\eqref{numeric_2} with time step $\tau=T/N$ and
applied with $(u_{0}(x), v_{0}(x))$.
Then, for sufficiently small $\tau$, the scheme is second-order
convergent, i.e., there exists a constant $C>0$, independent of $N$ and
$\tau$, such that at the final time $t_{N}=T$
\[
\|e_u^{N}\|^{2}+\|e_v^{N}\|^{2}\le C \tau^{4}.
\]
\end{theorem}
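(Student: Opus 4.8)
The plan is to recast the pair of error recursions \eqref{error-int}–\eqref{error-end} as a single global linear system and then exploit the block structure together with the absolute–stability result just established. Writing $\mathbf e^{\,n}=(e_u^{\,n},e_v^{\,n})^{\top}$ and collecting all time levels $n=1,\dots,N$, the relations \eqref{error-int}–\eqref{error-end} take precisely the BVM matrix form
\[
\mathcal M\,\mathbf E=\mathbf R,\qquad \mathcal M=A\otimes I-\tau\,B\otimes D,
\]
where $A,B,a_0,b_0$ are the GMM matrices \eqref{A_a}–\eqref{B_b}, $D$ is the block matrix of Section~\ref{sec4}, $\mathbf E$ stacks the nodal errors and $\mathbf R$ stacks the residuals. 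By the truncation bounds recorded above, the interior blocks of $\mathbf R$ are $O(\tau^{3})$ while the single final block is $O(\tau^{2})$. The theorem then reduces to the bound $\|\mathbf E\|\le C\tau^{2}$ in the relevant discrete norm, i.e.\ to controlling $\mathcal M^{-1}$.

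Next I would diagonalise. Under the commuting hypothesis $PQ=QP$ used in \eqref{eig_eq}–\eqref{eig_D}, $D$ is diagonalisable, $D=V\Lambda V^{-1}$ with $\Lambda=\operatorname{diag}(\lambda_i)$ and the $\lambda_i$ given explicitly by \eqref{eig_D}. Applying $I\otimes V^{-1}$ decouples $\mathcal M\mathbf E=\mathbf R$ into $2(m-1)$ independent scalar boundary–value systems
\[
(A-q_i B)\,\hat{\mathbf e}_i=\hat{\mathbf r}_i,\qquad q_i:=\tau\lambda_i,
\]
one per eigenvalue. For the cases of interest the $\lambda_i$ are real (as in the Example with $\mathcal L=0$, where $\lambda_i=\pm\sqrt{\lambda_i^{P}}$) or otherwise lie off the imaginary segment $[-\mathrm i,\mathrm i]$, so $q_i\in R_{1,1}=\mathbb C\setminus[-\mathrm i,\mathrm i]$ and the preceding stability theorem applies. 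The characteristic polynomial $\pi(z,q_i)=z^{2}-2q_i z-1$ then has one root $z_-(q_i)$ strictly inside and one root $z_+(q_i)$ strictly outside the unit circle, with $z_+z_-=-1$; this exponential dichotomy makes $A-q_iB$ invertible and identifies its inverse with a discrete Green's function $g^{(i)}_{pq}$ obeying a geometric decay estimate $|g^{(i)}_{pq}|\le C\,\rho_i^{\,|p-q|}$, $\rho_i=|z_-(q_i)|<1$.

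With the Green's function in hand I would estimate the final–time error mode by mode. The $O(\tau^{2})$ endpoint residual is multiplied only by the $O(1)$ boundary entry $g^{(i)}_{N,N}$ and hence stays $O(\tau^{2})$ at $t_N$, whereas the $N-1$ interior residuals of size $O(\tau^{3})$ contribute $\sum_{q}|g^{(i)}_{N,q}|\,O(\tau^{3})\le C\,\tau^{3}/(1-\rho_i)$. The delicate point is the slowest mode: for the smallest eigenvalue one has $1-\rho_i\gtrsim |q_i|\gtrsim \tau\,\lambda_{\min}$ with $\lambda_{\min}=\varepsilon\pi/L$ fixed, so that this apparently $O(1/\tau)$ stability constant is exactly cancelled by the extra power of $\tau$ carried by the interior truncation error, leaving an $O(\tau^{2})$ bound. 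Summing the squared modal errors and transforming back with $V$ — using orthonormality of the eigenbasis (Parseval) in the symmetric/normal cases, or $\operatorname{cond}(V)$ otherwise — yields $\|e_u^{N}\|^{2}+\|e_v^{N}\|^{2}\le C\,\tau^{4}$.

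The hard part is thus the uniform (in $N$ and across the spectrum of $D$) bound on $\mathcal M^{-1}$, and in particular the bookkeeping showing that the $1/\tau$–type growth of the stability constant for the near–critical slow mode is compensated by the gain of one power of $\tau$ in the interior local truncation error, while the lower–order ($O(\tau^{2})$) endpoint residual does not pollute the interior because its influence decays geometrically away from the final node. A less explicit alternative is to invoke directly the general BVM convergence theorem of \cite{bvm_book}, which packages exactly this Green's–function estimate into the statement that a $(k_1,k_2)$–stable, order-$p$ consistent method with $\tau\lambda_i$ in the region of absolute stability converges with order $p$; for $p=2$ this gives the claim at once, at the cost of verifying its hypotheses, namely the spectral location $q_i\notin[-\mathrm i,\mathrm i]$ established above.
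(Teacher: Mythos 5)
Your proposal takes a genuinely different route from the paper. The paper never forms the global BVM matrix nor diagonalises anything: it tests the error recursions \eqref{error-int}--\eqref{error-end} against $e_u^n$ and $e_v^n$ in $L^2(\Omega)$, controls the operator terms case by case ($\mathcal{L}=0$, $\mathcal{L}=\delta\mathcal{I}$, $\mathcal{L}=\delta\mathcal{D}$) through the bilinear form of $(-\Delta)^{1/2}$, sign and Poincar\'e-type inequalities, and closes with the discrete Gr\"onwall inequality. Your stability-times-consistency bookkeeping is conceptually sound where it is spelled out: the cancellation of the $1/(\tau\lambda_{\min})$ stability constant of the near-critical slow mode against the extra power of $\tau$ in the interior residual, and the fact that the $O(\tau^{2})$ endpoint residual couples only through an $O(1)$ Green's-function entry, is exactly the right mechanism and would yield an alternative (fully discrete, constant-coefficient) proof.

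However, there is a genuine gap at the back-transformation step, and it sits precisely where you defer the work. Your claim of ``orthonormality of the eigenbasis (Parseval) in the symmetric/normal cases'' is based on a false premise: $D$ is \emph{never} normal here. Even for $\mathcal{L}=0$ with $P$ symmetric positive definite, $D^{\top}D=\operatorname{diag}(P^{2},I)\neq\operatorname{diag}(I,P^{2})=DD^{\top}$. The eigenvector matrix is
$V=\begin{pmatrix} W & W\\ W\Lambda & -W\Lambda\end{pmatrix}$
with $W$ orthogonal and $\Lambda=\operatorname{diag}(\lambda_i)>0$, whose singular values are $\sqrt2$ and $\sqrt2\,\lambda_i$, so $\operatorname{cond}(V)\sim\lambda_{\max}/\min(1,\lambda_{\min})\sim \varepsilon/h$; the fallback ``or $\operatorname{cond}(V)$ otherwise'' therefore gives at best $\|e^N\|\lesssim \tau^{2}/h$, not the claimed bound, and in the semi-discrete setting in which the paper actually states and proves the theorem ($\mathcal{P}$, $\mathcal{L}$ unbounded operators, $L^{2}(\Omega)$ norms) the spectrum is unbounded and this constant is infinite. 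The gap is repairable, but only by exploiting structure you do not use: $V^{-1}$ weights the $r_v$-residual by $\Lambda^{-1}$, and the $\Lambda$ reappearing in $e_v=W\Lambda(\hat e_{+}-\hat e_{-})$ must be cancelled against the $1/(\tau\lambda_i)$ Green's-function factor mode by mode; even then, summing the squared modal bounds requires exchanging a maximum over time levels with the sum over modes, which needs time-regularity of the residual (effectively $\partial_t^{4}u\in L^{2}$-type hypotheses). Two smaller points: in the periodic advection case the constant spatial mode gives $\lambda=0$, hence $q=0\in[-\mathrm{i},\mathrm{i}]$ lies \emph{outside} $R_{1,1}$, so that mode needs a separate zero-stability argument (Green's-function row sums grow like $N$, which still yields $N\tau^{3}=T\tau^{2}$); and your decoupling requires $PQ=QP$ and diagonalisability, hypotheses the paper's energy argument never needs.
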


\begin{proof}
Denote the norm $\|\phi\|$ in a Hilbert space defined by $\|\phi\|^2=\langle\phi,\phi\rangle=\int_\Omega \phi^2 dx$. For \( n = 1, 2, \dots, N-1 \), multiplying the first and third equations of system \eqref{error-int} by \( e_u^n \) and \( e_v^n \) respectively, and integrating over $\Omega$, we obtain:
\[
\begin{aligned}
\langle e_u^{n+1} - e_u^{n-1}, e_u^n\rangle &= 2 \tau \langle e_v^n, e_u^n\rangle + \langle r_u^n, e_u^n\rangle, \\
\langle e_v^{n+1} - e_v^{n-1}, e_v^n) &= 2 \tau \left[\langle\mathcal{P}(e_u^n), e_v^n\rangle + \langle2\mathcal{L}(e_v^n), e_v^n\rangle\right] + \langle r_v^n, e_v^n\rangle.
\end{aligned}
\]

At the final step \eqref{error-end}, multiplying the first and second equations of the same system by \( e_u^N \) and \( e_v^N \) respectively, and integrating over $\Omega$, yields:
\[
\begin{aligned}
\langle e_u^{N} - e_u^{N-1}, e_u^{N}\rangle &= \tau \langle e_v^{N},e^N_u\rangle + \langle r_u^N, e_u^{N}\rangle, \\
\langle e_v^{N} - e_v^{N-1}, e_v^{N}\rangle &= \tau \langle\mathcal{P}(e_u^{N}) + 2\mathcal{L}\langle e_v^{N}),e^N_v\rangle + \langle r_v^N, e_v^{N}\rangle.
\end{aligned}
\]

Then, summing the equations from \( n = 1 \) to \( N - 1 \) and including the terms corresponding to \( n = N \), we obtain:
\begin{equation}
    \langle e_u^{N}, e_u^{N}\rangle - \langle e_u^{0}, e_u^{0}\rangle
    = 2 \tau \sum_{n=1}^{N-1} \langle e_v^n, e_u^n\rangle 
    + \tau \langle e_v^{N}, e_u^{N}\rangle 
    + 2 \sum_{n=1}^{N-1}\langle r_u^n, e_u^n\rangle 
    + \langle r_u^N, e_u^{N}\rangle.\label{error1}
\end{equation}

Similarly,
\begin{equation}
    \begin{aligned}
    \langle e_v^{N}, e_v^{N}\rangle - \langle e_v^{0}, e_v^{0}\rangle 
    &= 2 \tau \sum_{n=1}^{N-1} \left( \langle\mathcal{P}(e_u^n), e_v^n\rangle + 2\langle\mathcal{L}(e_v^n), e_v^n\rangle\right) 
    + \tau \langle\mathcal{P}(e_u^{N}), e_v^{N}\rangle + 2\tau \langle\mathcal{L}(e_v^{N}), e_v^{N}\rangle \\
    &\quad + 2 \sum_{n=1}^{N-1}\langle r_v^n, e_v^n\rangle 
    +  \langle r_v^N, e_v^{N}\rangle.
    \end{aligned}\label{error2}
\end{equation}

Given the initial conditions \( e_u^{0} = 0 \) and \( e_v^{0} = 0 \), the left-hand sides of equations (\ref{error1}) and (\ref{error2}) simplify to \(\| e_u^{N} \|^2\) and \( \| e_v^{N} \|^2\), respectively.

On the right-hand side, applying the Cauchy–Schwarz inequality, we have
\[
\langle e_v^n, e_u^n\rangle \leq \frac{1}{2} \left( \| e_u^n \|^2 + \| e_v^n \|^2 \right).
\]

Consequently, we derive the inequality
\begin{equation}
    2 \tau \sum_{n=1}^{N-1} \langle e_v^n, e_u^n\rangle \leq \tau \sum_{n=1}^{N-1} \left( \| e_u^n \|^2 + \| e_v^n \|^2 \right).\label{ieq}
\end{equation}
Similar for \(\langle e_v^{N}, e_u^{N}\rangle\).

For terms involving \( (\mathcal{P}(e_u^n), e_v^n) +(2\mathcal{L}(e^n_v),e^n_v)\), we use the fact that \( e^n_v=-\epsilon(-\Delta)^{\frac{1}{2}}e^n_u + \mathcal{L}e^n_u\) from (\ref{ds_eq}) and derive
\[
(\mathcal{P}(e_u^n), e_v^n) +(2\mathcal{L}(e^n_v),e^n_v) \]
\[= ((-\epsilon^2\Delta-\mathcal{L}^2) e^n_u,(-\epsilon(-\Delta)^{\frac{1}{2}} + \mathcal{L})e^n_u) + 2(\mathcal{L}(-\epsilon(-\Delta)^{\frac{1}{2}} + \mathcal{L})e^n_u,(-\epsilon(-\Delta)^{\frac{1}{2}} + \mathcal{L})e^n_u)
\]
\[= \underbrace{-\epsilon^3\langle-\Delta e^n_u,(-\Delta)^\frac{1}{2}e^n_u\rangle}_{\text{[I]}} +
 \underbrace{\langle \mathcal{L}^2,(\epsilon(-\Delta)^{\frac{1}{2}} - \mathcal{L})e^n_u \rangle + \epsilon^2\langle-\Delta e^n_u, \mathcal{L}e^n_u\rangle+ 2\langle\mathcal{L}(-\epsilon(-\Delta)^{\frac{1}{2}} + \mathcal{L})e^n_u,(-\epsilon(-\Delta)^{\frac{1}{2}} + \mathcal{L})e^n_u\rangle}_{\text{[II]}}.\]
For the part [I], since we have 
\[(-\Delta)^\frac{1}{2}(-\Delta)^\frac{1}{2}u = \mathcal{H}(\partial_x (\mathcal{H}(\partial_x )))u = \mathcal{H}^2(\partial_x ^2)u = -\Delta u \]
for $u, ~(-\Delta)^\frac{1}{2}u\in D((-\Delta)^\frac{1}{2})$, then we conclude that
\[[\text{I}]=-\epsilon^3(-\Delta e^n_u,(-\Delta)^\frac{1}{2}e^n_u)=-\epsilon^3 \mathcal{B}((-\Delta)^\frac{1}{2}e^n_u,(-\Delta)^\frac{1}{2}e^n_u)\le0,\]
where we used the definition of $-(\Delta)^\frac{1}{2}$ in the weak sense via the bilinear form \cite{ten_laplacian}
\begin{equation}
    \langle (-\Delta)^{\frac{1}{2}} u, v \rangle=\mathcal{B}(u, v) = \frac{1}{\pi} \int_{\mathbb{R}} \int_{\mathbb{R}} \frac{(u(x) - u(y))(v(x) - v(y))}{|x - y|^2}  dx dy, \quad \forall v \in H^1_0(\Omega).
\end{equation}  
For the part [II] involving the operator $\mathcal{L}$ we have
\[\langle \mathcal{L}^2,(\epsilon(-\Delta)^{\frac{1}{2}} - \mathcal{L})e^n_u \rangle + \epsilon^2(-\Delta e^n_u, \mathcal{L}e^n_u)+ 2(\mathcal{L}(-\epsilon(-\Delta)^{\frac{1}{2}} + \mathcal{L})e^n_u,(-\epsilon(-\Delta)^{\frac{1}{2}} + \mathcal{L})e^n_u)\]
\[\begin{aligned}
    =&-\epsilon^2\langle \Delta e^n_u,\mathcal{L}e^n_u \rangle + 2\epsilon^2\langle \mathcal{L}(-\Delta)^\frac{1}{2}e^n_u,(-\Delta)^\frac{1}{2}e^n_u \rangle -2\epsilon\langle \mathcal{L}(-\Delta)^\frac{1}{2}e^n_u,\mathcal{L}e^n_u \rangle \\
    &- 2\epsilon\langle\mathcal{L}^2e^n_u,(-\Delta)^\frac{1}{2}e^n_u\rangle + 2\langle\mathcal{L}^2e^n_u,\mathcal{L}e^n_u\rangle + \epsilon\langle\mathcal{L}^2e^n_u,(-\Delta)^\frac{1}{2}e^n_u\rangle- \langle \mathcal{L}^2e^n_u,\mathcal{L}e^n_u\rangle
\end{aligned}\]
\[\le -\epsilon^2\langle \Delta e^n_u,\mathcal{L}e^n_u \rangle+ 2\epsilon^2\langle \mathcal{L}(-\Delta)^\frac{1}{2}e^n_u,(-\Delta)^\frac{1}{2}e^n_u \rangle-\epsilon\mathcal{B}(\mathcal{L}e^n_u,\mathcal{L}e^n_u) + \langle \mathcal{L}^2e^n_u,\mathcal{L}e^n_u\rangle.\]
It is clearly that $[\text{II}]$ vanished if $\mathcal{L}=0$. As for the case $\mathcal{L} = -\delta\mathcal{I}$, we conclude that
\[
    [\text{II}]\le -3\epsilon^2\delta\| \partial_x e^n_u\|^2 -\epsilon\delta^2\mathcal{B}(e^n_u,e^n_u)-\delta^3\|e^n_u\|^2
    \le -\left(\frac{3\epsilon^2\delta\pi}{L}+\delta^3\right)\|e^n_u\|^2-\epsilon\delta^2\mathcal{B}(e^n_u,e^n_u),\]
where Poincáre inequality and $\langle \partial_x (-\Delta)^\frac{1}{2}e^n_u,(-\Delta)^\frac{1}{2}e^n_u \rangle = \|e^n_u\|^2$ are used, see \cite{hilbert_book},\cite{evans}.
For the case \(\mathcal{L} = \delta\partial_x \), we perform a similar analysis under the assumption that there is a stable flux across the boundary, i.e., \(\partial_x u \equiv \text{constant}\) to ensure [II] is bounded. In what follows, we simply consider the homogeneous case where \(\partial_x u \in H^1_0(\Omega)\): 
\[\text{[II]}\le -\epsilon\mathcal{B}(e^n_u,e^n_u)\le0.\]
Moreover, 
\[\text{[I]}=-\epsilon^3 \mathcal{B}((-\Delta)^\frac{1}{2}e^n_u,(-\Delta)^\frac{1}{2}e^n_u) = -\epsilon^3\mathcal{B}(\partial_x e^n_u,\partial_x e^n_u)\le0.\]
Therefore, summing (\ref{error1}) and (\ref{error2}), we obtain the following rough estimate:
\[
\| e_u^{N} \|^2 + \| e_v^{N} \|^2 \leq \tau \sum_{n=1}^{N} \left( \| e_u^n \|^2 + \| e_v^n \|^2 \right) + 2  \sum_{n=1}^{N-1}(\langle r^n_u,e^n_u\rangle + \langle r^n_v,e^n_v\rangle) + \langle r^N_u,e^N_u\rangle+ \langle r^N_v,e^N_v\rangle.
\]

Applying inequality (\ref{ieq}) once again, we arrive at the following estimate:
\[
\begin{aligned}
    \lb\frac{1}{2}-\tau\rb E^{N} \leq &2\tau \sum_{n=1}^{N-1} E^n +  \frac{1}{\tau}\sum_{n=1}^{N-1} \left( \|r_u^n\|^2 + \|r_v^n\|^2 \right) + \frac{1}{2}(\|r_u^N\|^2 + \|r_v^N\|^2)\\
    \le &2\tau\sum_{n=1}^{N-1} E^n + c\tau^4 
\end{aligned}
\]
where $c\in \RR^+$ is a constant and the total error $E^n$ is defined as \(E^n = \| e_u^n \|^2 + \| e_v^n \|^2 \). Assume a relative small time-step $\tau$, for example $\tau<\frac{1}{4}$, then we use the discrete Grönwall inequality conclude that
\[E^N\le 4\tau \sum_{n=1}^{N-1} E^n + c\tau^4\le c\tau^4 e^{4T}.\] 
\end{proof}
It is clear that the numerical scheme has second-order convergence. In practical applications, as suggested by L.~Brugnano \cite{bvm_book}, it is preferable to choose a final step with the same truncation order as the main scheme for stiff problems, to prevent the error from the final step from propagating through the system and degrading the overall convergence order.

\subsection{Parallel preconditioning}
Usually the resulting linear system (\ref{kron_ode_source}) is large and ill-conditioned, and solving it is a core problem in the application of BVMs. If a direct method is employed to solve such a linear system, the operation cost can be very high for practical application. Therefore interest has been turned to iterative solvers, such as the GMRES method. As we know that a clustered spectrum often translates in rapid convergence of GMRES method \cite{Saad86}, so we use the GMRES method for solving the resulting linear system (\ref{kron_ode_source}). To accelerate the convergence, we introduce the block $\omega$-circulant preconditioner
\begin{equation}
\mathbf{P} = \omega(A)\otimes I_m - \tau I_N\otimes D,
\end{equation}
where $\omega(A)$ is the $\omega$-circulant approximation of $A$ introduced for linear multistep methods in boundary value form; see \cite{Bertaccini03}. When $\omega = \exp(\mathrm{i}\theta)$ with $\theta\in(-\pi,\pi]$, the matrix $\omega(A)$ admits the decomposition
\begin{equation}
\omega(A) = \Theta^{*}F^{*}\Lambda_\omega F\Theta,
\end{equation}
where
\[
\Theta = {\rm diag}\left(1,\omega^{-1/N},\ldots,\omega^{-(N-1)/N}\right),
\]
$\Lambda_\omega$ is the diagonal matrix containing the eigenvalues of $\omega(A)$, and $F$ is the discrete Fourier matrix. By the properties of the Kronecker product, the preconditioner can be factorized as
\begin{equation*}
\mathbf{P}
=
\left(\Theta^{*}F^{*}\otimes I_m\right)
\left(\Lambda_\omega\otimes I_m - \tau I_N\otimes D\right)
\left(F\Theta\otimes I_m\right).
\end{equation*}
Therefore, the application ${\bm z} = \mathbf{P}^{-1}\widetilde{{\bm r}}$ required in preconditioned GMRES can be carried out in three steps:
\begin{itemize}
    \item[(a)] Compute
    \(
    \widetilde{{\bm v}}_1
    =
    \left(\Theta^{*}F^{*}\otimes I_m\right)^{-1}\widetilde{{\bm r}},
    \)
    which only involves diagonal scaling and FFTs;

    \item[(b)] Solve the block diagonal system
    \(
    \left(\Lambda_\omega\otimes I_m - \tau I_N\otimes D\right)\widetilde{{\bm v}}_2
    =
    \widetilde{{\bm v}}_1,
    \)
    which is equivalent to solving the $N$ auxiliary systems
    \begin{equation}
    \left(\lambda_j^\omega I_m - \tau D\right)\widetilde{{\bm v}}_{2,j}
    =
    \widetilde{{\bm v}}_{1,j},
    \qquad
    j=1,2,\ldots,N,
    \label{sec4_19}
    \end{equation}
    where
    \(
    \Lambda_\omega = \operatorname{diag}\left(\lambda_1^\omega,\ldots,\lambda_N^\omega\right).
    \)
    
    These systems are independent and can therefore be solved in parallel. For example, if the spatial matrices $P$ and $Q$ in \eqref{ode_original} are simultaneously diagonalizable, then the same auxiliary systems admit a fast transform-based implementation, see Appendix C.

    \item[(c)] Compute
    \(
    \widetilde{{\bm z}}
    =
    \left(F\Theta\otimes I_m\right)^{-1}\widetilde{{\bm v}}_2,
    \)
    which again only involves inverse FFTs and diagonal scaling.
\end{itemize}

\begin{remark}
    The transform-based acceleration described in Appendix C does not introduce a new preconditioner, it is only a fast implementation of the same block $\omega$-circulant preconditioner $\mathbf{P}$.
\end{remark}

The proposed $\omega$-circulant preconditioner can be implemented efficiently in parallel. Its application consists of FFT-based temporal transforms and the solution of $N$ independent shifted systems \((\lambda_j^\omega I_m-\tau D)\). Furthermore, if the spatial operators are simultaneously diagonalizable, each shifted system can be further decoupled in the transformed spatial basis. We also note that the present approach is different from other parallel-in-time strategies, including iterative methods such as Parareal, MGRIT, and PFASST, as well as direct diagonalization-based all-at-once solvers such as that of Liu et al. \cite{liu_pint}. Compare to these methods, the present method follows an all-at-once formulation and uses GMRES together with a block $\omega$-circulant preconditioner for the global space-time system. In this sense, it is more closely related to all-at-once and ParaDiag-type solvers \cite{pint_book}, while retaining the flexibility of an iterative preconditioning framework.

From the algebraic point of view, classical results on block circulant preconditioners for
BVM-type systems show that the preconditioned matrix can be interpreted as a low-rank perturbation of the identity. This structure implies that the spectrum of the preconditioned matrix is clustered around $1\in\mathbb C$, which explains the significant improvement in GMRES convergence when $\mathbf P$ is used. Moreover, under the additional assumption that $\mathbf P^{-1}\mathcal M$ is diagonalizable, one obtains in exact arithmetic the bound that preconditioned GMRES terminates in at most $2mk+1$ iterations, see \cite{pre_1}. Since matrices of the form $I+K+E$ with $\operatorname{rank}(K)\ll n$ and $\|E\|$ small have recently been analysed in detail in \cite{Carr23}, that framework may also be used to further investigate the convergence behaviour of GMRES for the present problem. We leave this analysis for future work.


\section{Numerical examples}
\label{sec5}
In this section, we present numerical experiments for three representative nonlocal PDE models to illustrate the performance of the proposed GMM time integrator. The models considered are: (i) a half-diffusion equation, (ii) a half-diffusion reaction equation, and (iii) an advection-dominated model with the half-diffusion. Unless stated otherwise, all simulations are carried out in one spatial dimension on the domain $\Omega=(0,20)$ with uniform grid spacing $h$.
Spatial operators are discretised with second-order central differences.
For case (iii) periodic boundary conditions are imposed, giving the circulant matrices
\[-\Delta \;\approx\; -\frac{1}{h^{2}}
\begin{pmatrix}
-2 & 1 & &  & 1\\
1 & -2 & 1 &  & \\
& \ddots& \ddots & \ddots &  \\
 & & 1 & -2 & 1\\
1 &  & & 1 & -2
\end{pmatrix},
\qquad
\partial_x  \;\approx\; \frac{1}{2h}
\begin{pmatrix}
0 & 1 & & & -1\\
-1 & 0& 1 & & \\
 & \ddots& \ddots & \ddots &  \\
 & &-1 & 0 & 1 \\
1 & & & -1& 0 
\end{pmatrix}.\]

Time integration is performed over the interval $[0,20]$ by the second-order GMM \eqref{A_a}--\eqref{B_b} presented in Section~\ref{sec4}. A circulant preconditioner is used for the larger linear systems, especially in the advection-dominated case, to improve efficiency. Moreover, in all three models the spatial matrices $P$ and $Q$ are simultaneously diagonalizable, which allows the preconditioning operator to be applied efficiently in transform space: by DST-based acceleration in the first two cases and by FFT-based acceleration in the periodic case. 

\subsection{The half-diffusion equation ($\mathcal{L} = 0$)}
We first consider the pure half-diffusion model, in which transport is driven solely by the half-Laplacian:
\begin{equation}
   \left\{ \begin{aligned}
        &\partial_t u= -\epsilon(-\Delta)^{\frac{1}{2}}u+f(x,t),~~(x,t)\in\RR\times(0,T],\\
        &u(x,0) = u_0(x).
    \end{aligned}\right.\label{heat}
\end{equation}

Here $\epsilon>0$ is the anomalous diffusivity and $\hlap$ captures long-range jumps or sub-diffusive spreading typical of Lévy-flight processes, highly heterogeneous media, or visco-elastic trapping.  The source term $f(x,t)$ represents external injection or removal of mass. We choose the diffusion coefficient $\epsilon = 0.1$ and test the homogeneous case with the initial condition $u_0(x) = \frac{1}{(1+x^2)^2}$, the numerical diffusion process is presented in Figure \ref{heat_sol}. By choosing $f(x,t) = -\epsilon\cos(t)  \frac{  x^4 + 6x^2 - 3 }{2 \left( x^2 + 1 \right)^3} - \frac{\sin(t)}{\left( x^2 + 1 \right)^2}$ with the same initial condition $u_0(x)$ and same $\epsilon$ we have the exact solution $u(x,t) = \frac{\cos(t)}{(1+x^2)^2}$. In Figure \ref{converge_0}, we compare the convergence behaviour of the GMM using the direct solver and the preconditioned GMRES solver. The numerical results show that both solvers recover essentially the same convergence order. For the large linear systems considered here, the preconditioned GMRES solver is significantly faster than the direct solver. To quantify this computational advantage, we define the speedup factor by
\[
\mathrm{speedup}=\frac{t_{\mathrm{std}}}{t_{\mathrm{pre}}},
\]
where \(t_{\mathrm{std}}\) and \(t_{\mathrm{pre}}\) denote the CPU times of the standard BVM and the preconditioned BVM, respectively.  Moreover, for some test cases, GMRES yields higher numerical accuracy, which is attributed to its minimal-residual property and the relatively large condition number of the linear system. In most tests, the preconditioned GMRES converges in about 18 iterations at a relative tolerance of $10^{-5}$. To isolate the effect of the proposed preconditioner, we also compared the preconditioned GMRES solver with the corresponding unpreconditioned GMRES solver for this test case. The unpreconditioned GMRES converges much more slowly, with average iteration counts exceeding 200. As shown in Figure~\ref{cpu_compare}, the proposed preconditioner leads to a substantial reduction in CPU time.

\begin{figure}[htbp]
    \centering
    \begin{minipage}{0.45\linewidth}
         \includegraphics[width=\linewidth]{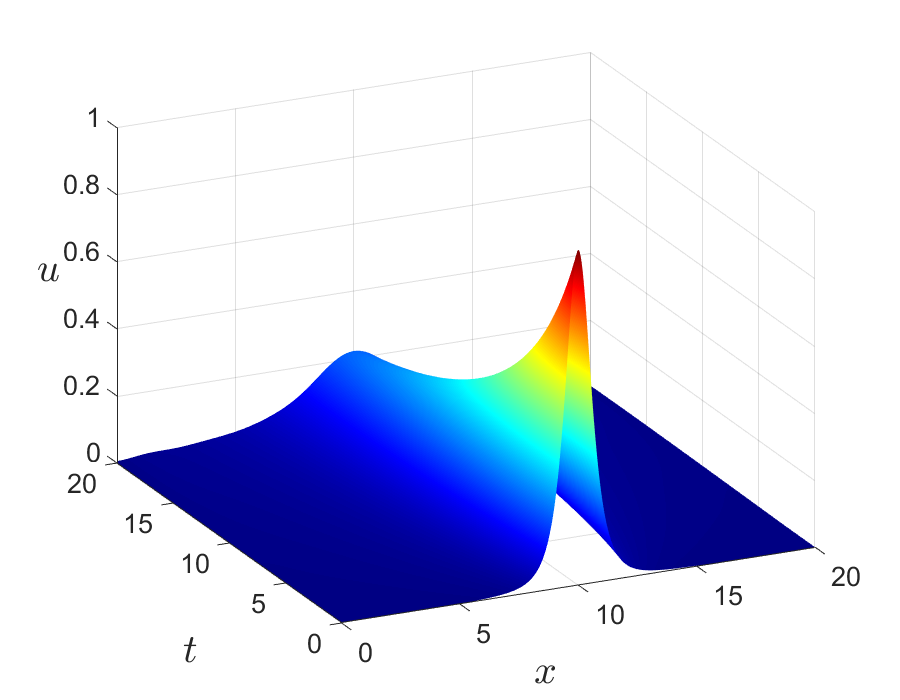}
    \end{minipage}
        \begin{minipage}{0.45\linewidth}
         \includegraphics[width=\linewidth]{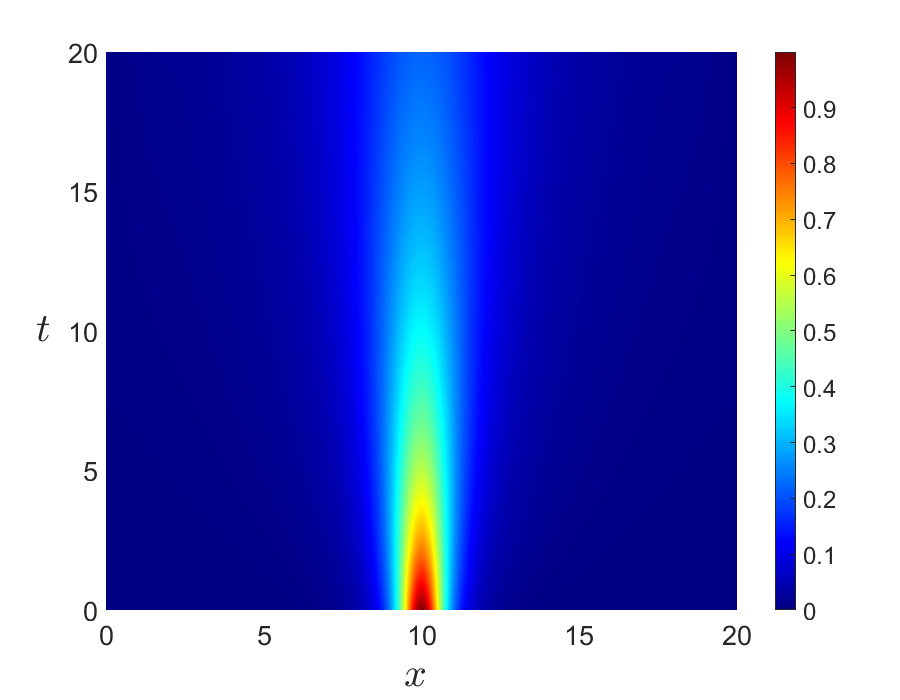}
    \end{minipage}
   
    \caption{Numerical solution of the half-diffusion heat model (\ref{heat}) with $f(x,t) = 0$, initialized with $u_0(x) = \frac{1}{(1+x^2)^2}$ and homogeneous Dirichlet boundary condition, with $\tau=0.0391$ and $h=0.0195$ at time $T=20$.}
    \label{heat_sol}
\end{figure}

\begin{figure}[t]
    \centering
    \begin{minipage}{0.45\linewidth}
        \includegraphics[width=\linewidth]{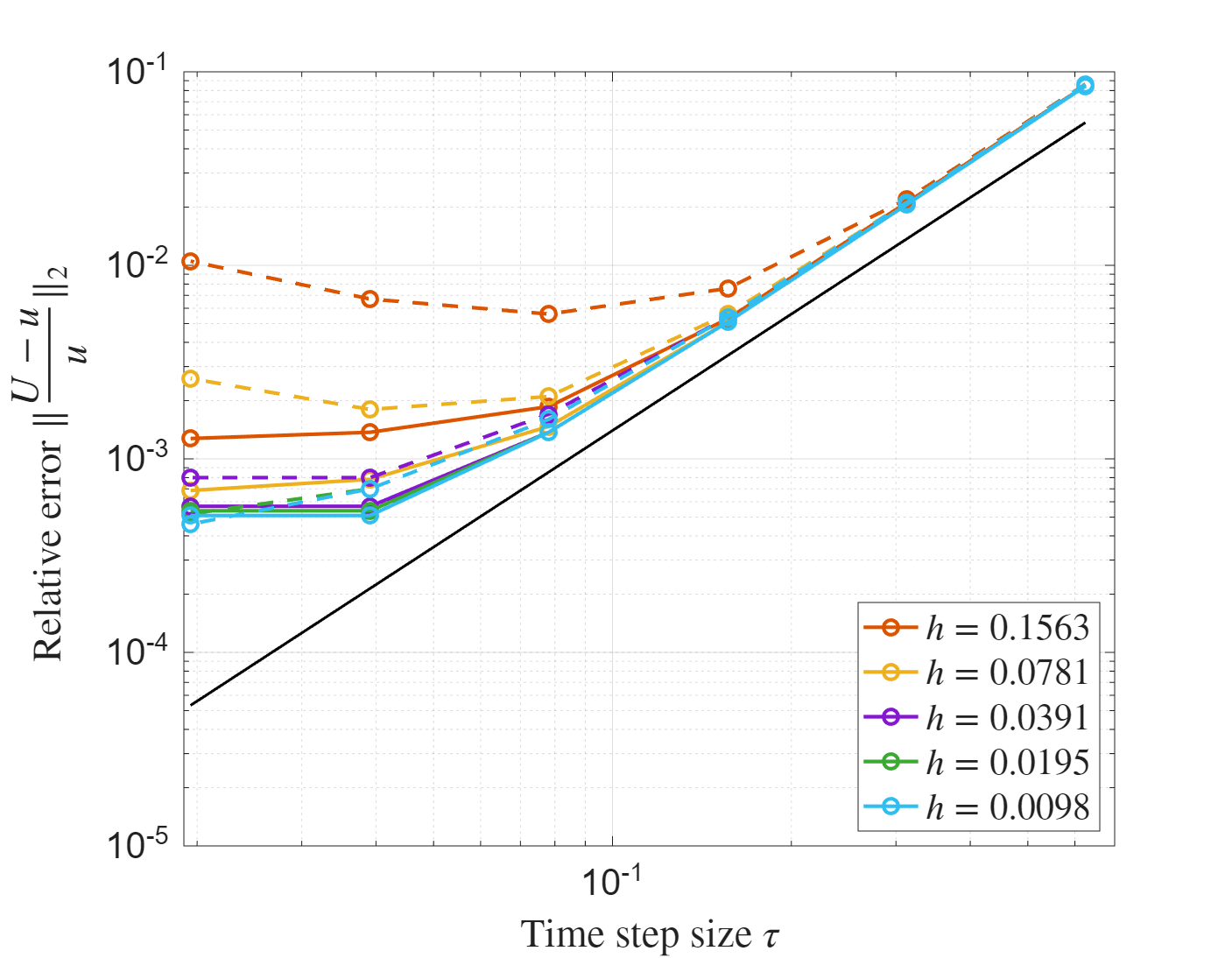}
    \end{minipage}
    \begin{minipage}{0.45\linewidth}
        \includegraphics[width=\linewidth]{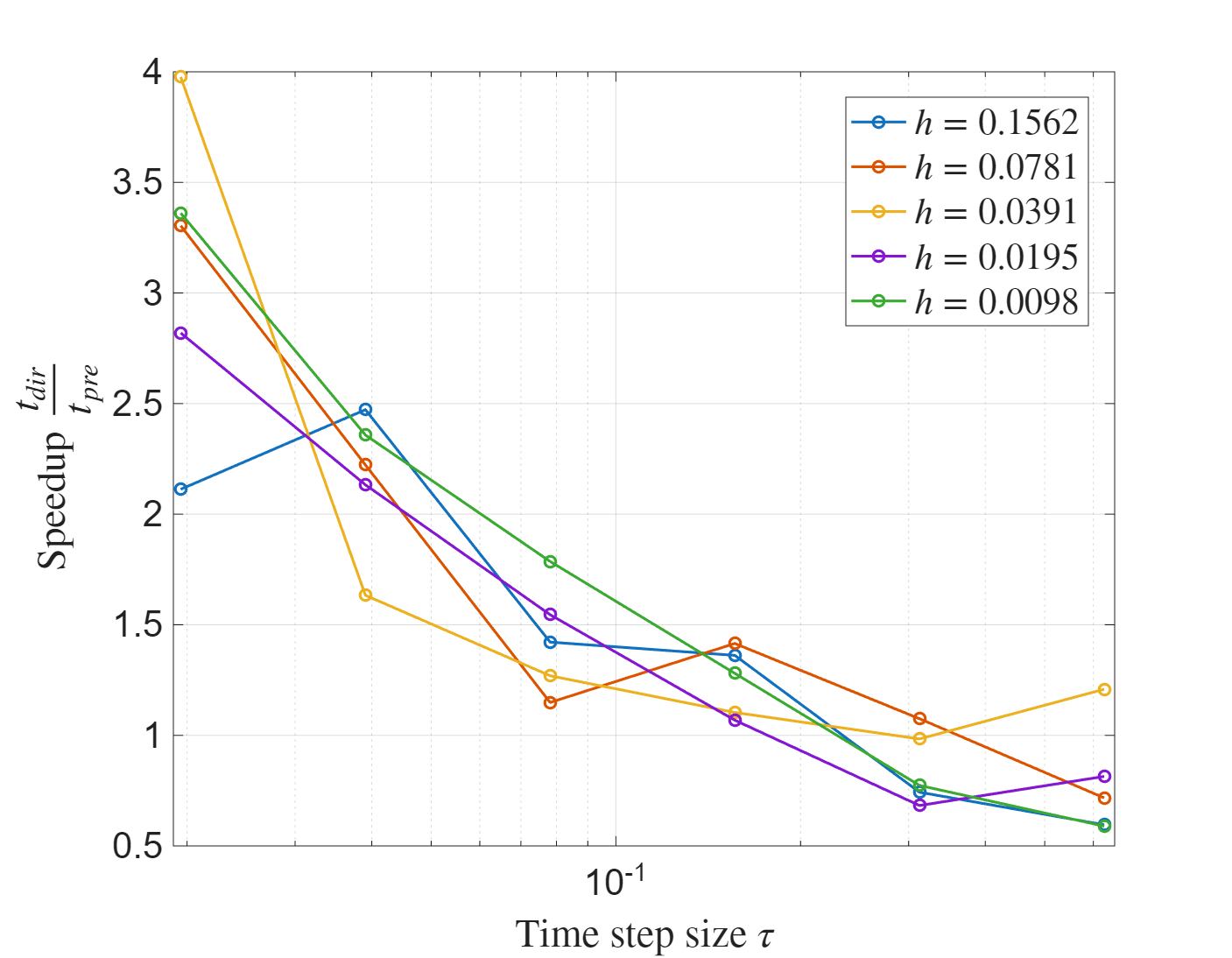}
    \end{minipage}
    \caption{Comparison at \(T=20\) for (\ref{heat}).
    Left: relative \(L^2\)-errors versus \(\tau\) for the preconditioned BVM (solid lines) and the standard BVM (dashed lines) with different $h=\frac{L}{128},\frac{L}{256},\frac{L}{512},\frac{L}{1024},\frac{L}{2048}$. The black line denotes the reference second-order convergence rate.
    Right: speedup ratio for the same spatial step sizes \(h\).}
    \label{converge_0}
\end{figure}

\begin{figure}
    \centering
    \includegraphics[width=0.5\linewidth]{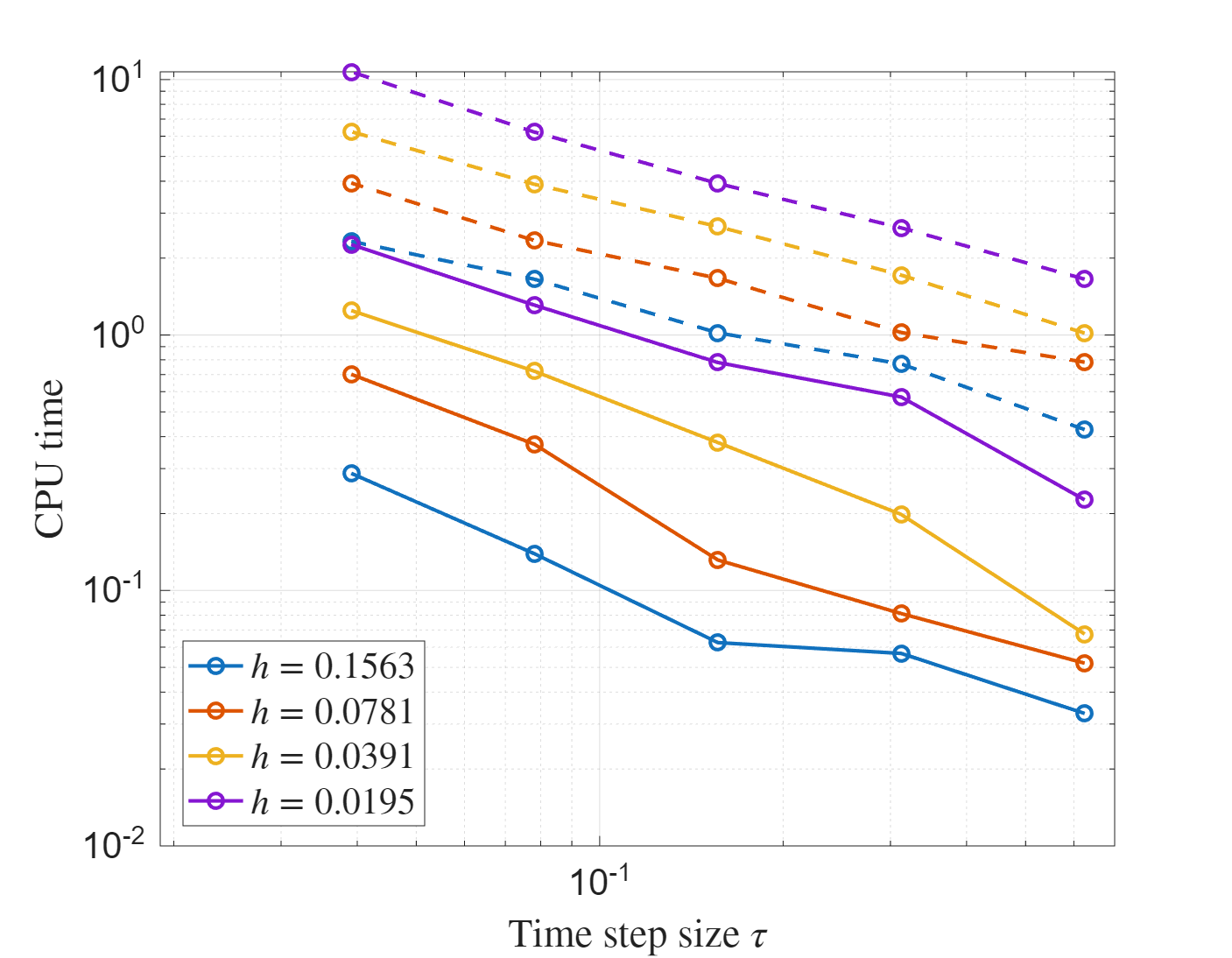}
    \caption{CPU time of GMRES with the preconditioner (solid lines) and without the preconditioner (dashed lines) for different $h=\frac{L}{128},\frac{L}{256},\frac{L}{512},\frac{L}{1024}$.}
    \label{cpu_compare}
\end{figure}

\subsection{The half-diffusion reaction equation ($\mathcal{L}=\delta\mathcal{I}$)}
Next, we examine a mass-transfer model that incorporates both diffusion and a linear reaction term. It describes unsteady transport in a quiescent medium, where the substance spreads while simultaneously undergoing a first-order reaction at rate $\delta$; negative values of $\delta$ correspond to loss or absorption. This framework is relevant, for instance, to solute transport with thermal or radioactive decay, and also to one-dimensional heat conduction with internal heat loss proportional to the temperature. In its fractional version, the model accounts for anomalously slow transport, often associated with trapping effects or viscoelasticity, while preserving the same linear reaction mechanism. The governing equation we consider is: 
\begin{equation}
   \left\{ \begin{aligned}
           & \partial_t u= -\epsilon(-\Delta)^\frac{1}{2}u + \delta u + f(x,t),~~(x,t)\in\RR\times(0,T],\\
        &u(x,0) = u_0(x).
    \end{aligned}\right.\label{diffusion_u}
\end{equation}

In this section, we choose $\epsilon = 0.1, \delta =-0.02$ and test the homogeneous case with the initial condition $u_0(x) = \frac{1}{(1+x^2)^2}$, the numerical diffusion process is presented in Figure \ref{u_sol}.  By choosing $f(x,t) = -\epsilon\cos(t)  \frac{  x^4 + 6x^2 - 3 }{2 \left( x^2 + 1 \right)^3} -\frac{\sin (t) +\delta \cos (t)}{(1+x^2)^2}$ with the same initial condition and same parameters, the exact solution is given by $u(x,t) = \frac{\cos(t)}{(1+x^2)^2}$. In Figure \ref{converge_u}, we compare the convergence behaviour of the GMM using the standard direct solver and the preconditioned GMRES solver. As in the case \(\mathcal{L}=0\), the preconditioned BVM shows better numerical performance than the standard BVM. In most tests, the preconditioned GMRES converges in about 17 iterations at a relative tolerance of $10^{-5}$.

\begin{figure}[htbp]
    \centering
    \begin{minipage}{0.45\linewidth}
        \includegraphics[width=\linewidth]{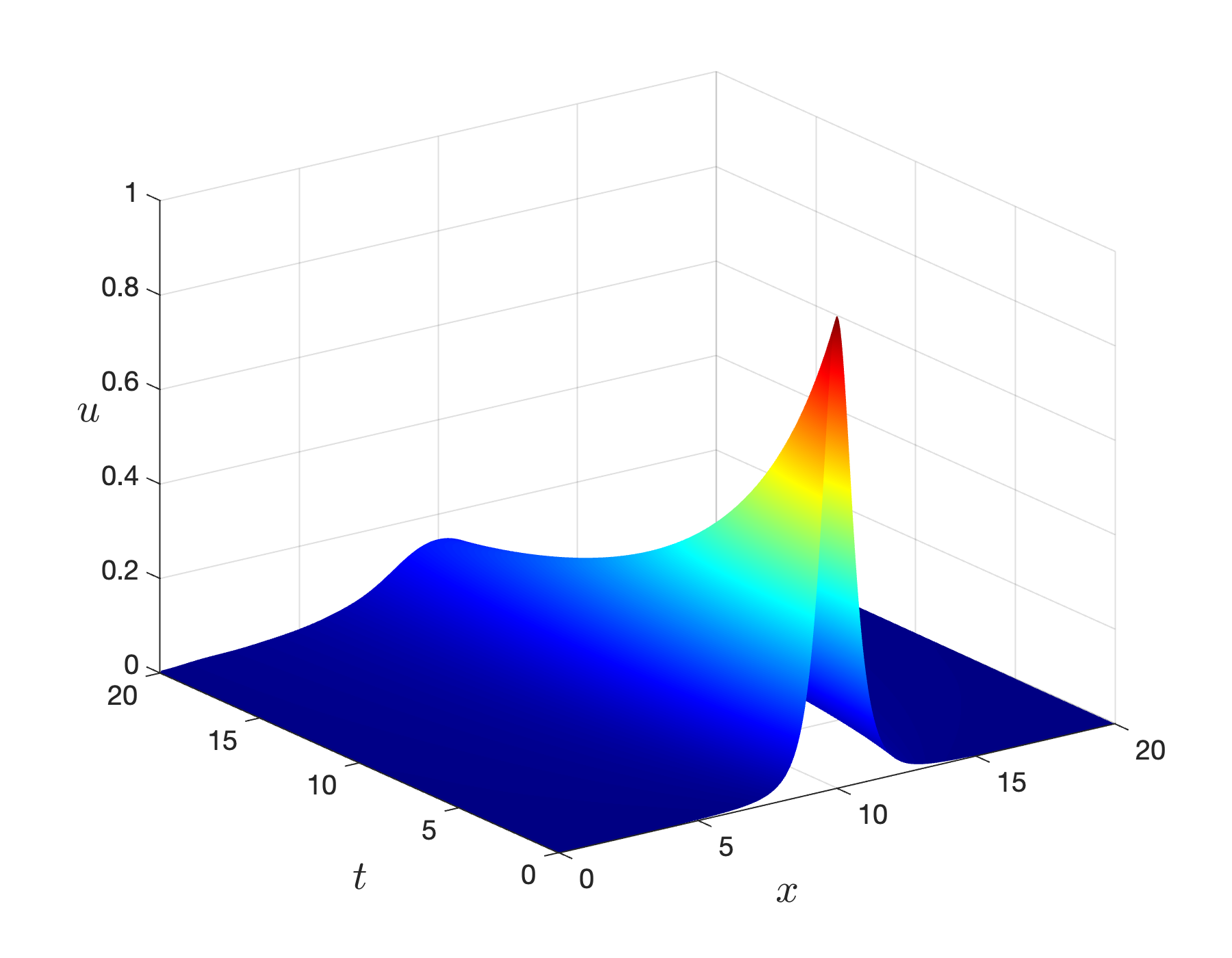}
    \end{minipage}
    \begin{minipage}{0.45\linewidth}
        \includegraphics[width=\linewidth]{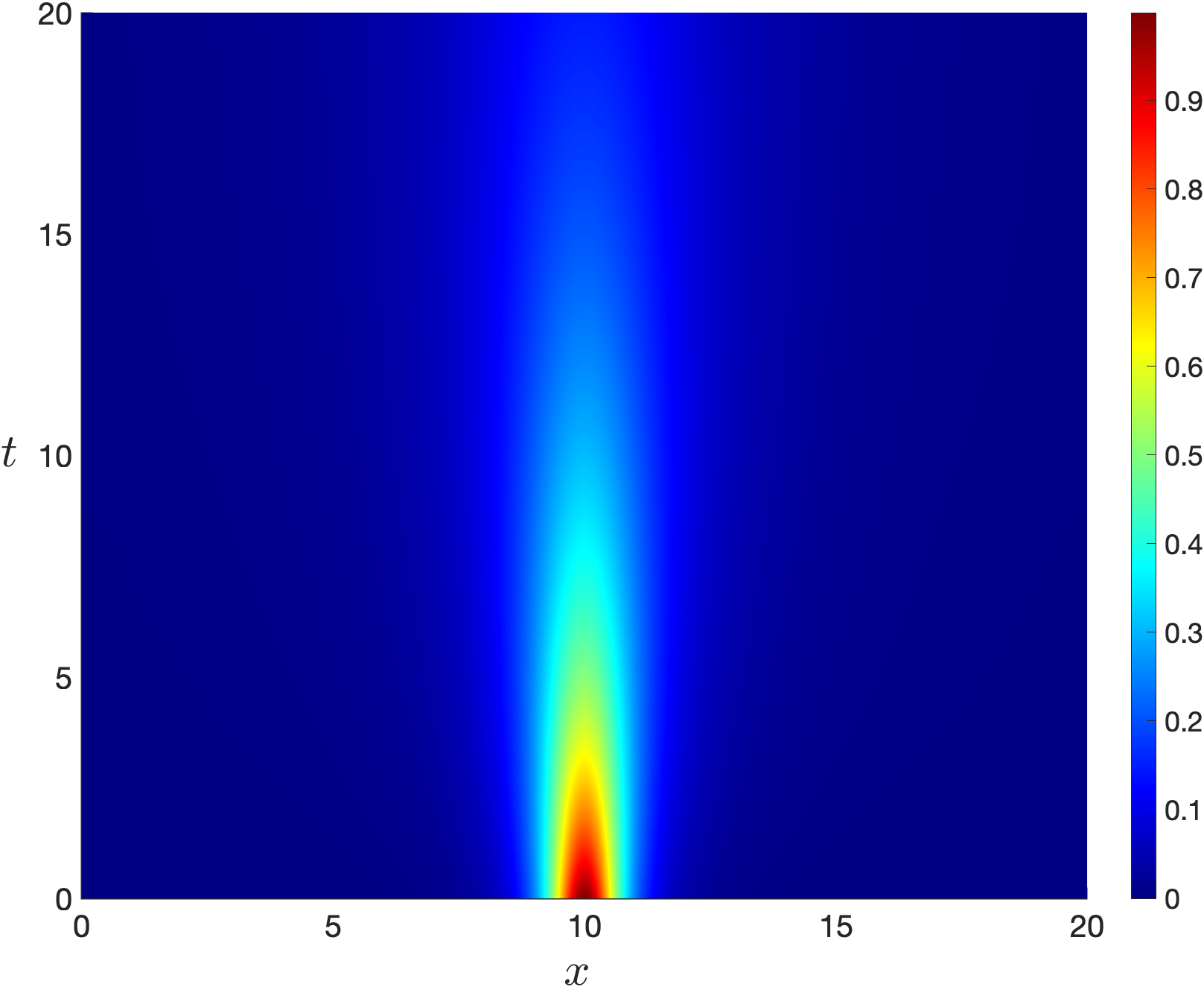}
    \end{minipage}
    \caption{The numerical solution of (\ref{diffusion_u}) with $\epsilon=0.1,\delta=0.02$, initialized with $u_0(x) = \frac{1}{(1+x^2)^2}$ and homogeneous Dirichlet boundary condition, with $\tau=0.0391$ and $h=0.0195$ at time $T=20$.}
    \label{u_sol}
\end{figure}

\begin{figure}[htb]
    \centering    
    \begin{minipage}{0.45\linewidth}
        \includegraphics[width=\linewidth]{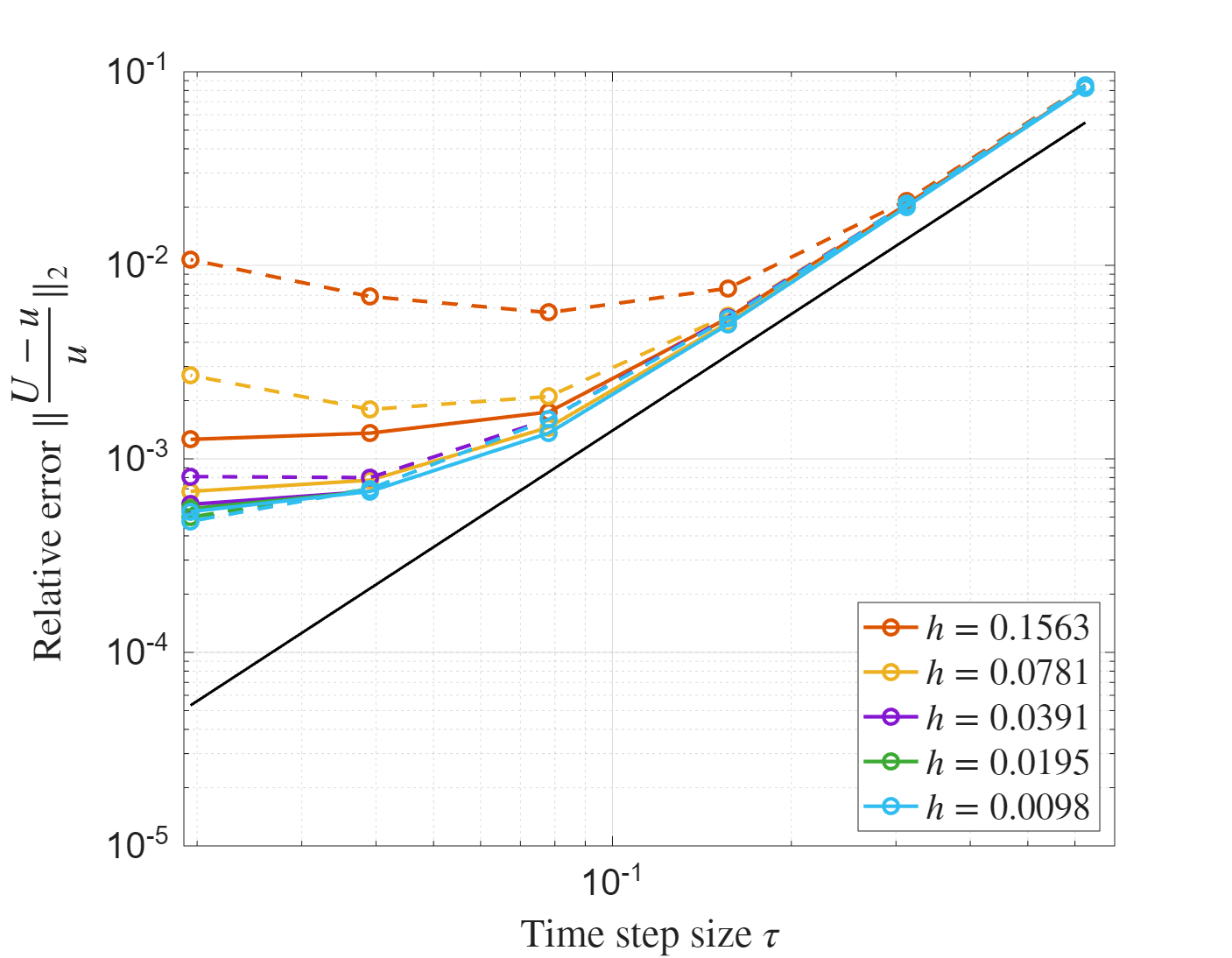}
    \end{minipage}
    \begin{minipage}{0.45\linewidth}
        \includegraphics[width=\linewidth]{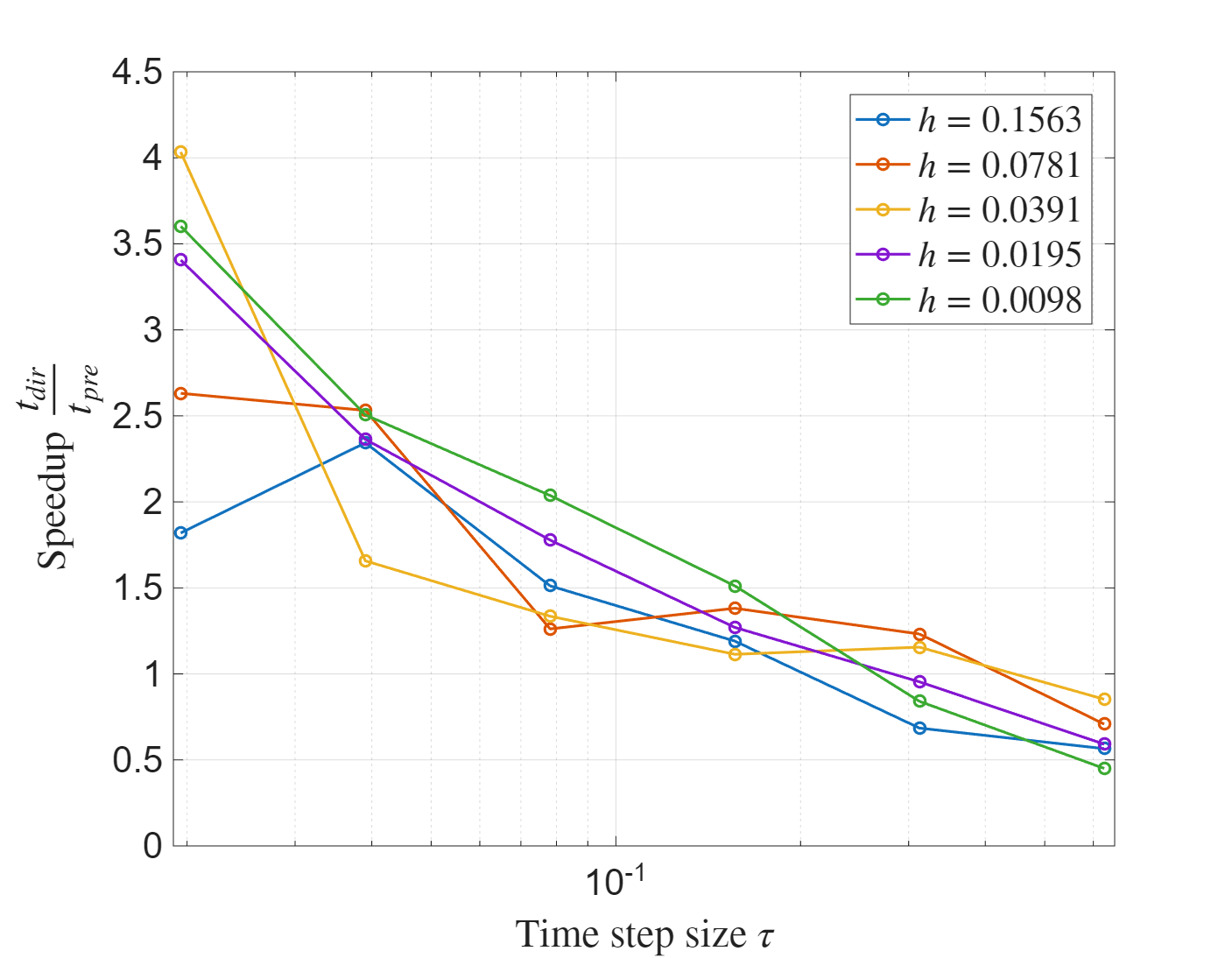}
    \end{minipage}
       \caption{Comparison at \(T=20\) for (\ref{diffusion_u}).
Left: relative \(L^2\)-errors versus \(\tau\) for the preconditioned BVM (solid lines) and the standard BVM (dashed lines) with different $h=\frac{L}{128},\frac{L}{256},\frac{L}{512},\frac{L}{1024},\frac{L}{2048}$. 
Right: speedup ratio for the same spatial step sizes \(h\).}
    \label{converge_u}
\end{figure}
\subsection{An advection dominated equation with half-diffusion ($\mathcal{L} = \delta \partial_x $)}
Finally, we turn to a half-diffusion model with advection, which describes one-dimensional, nonstationary mass transfer in which a systematic bias or flow is superimposed on jump-driven diffusion. Compared with the standard diffusion  $\Delta$, the operator $-(-\Delta)^{\frac12}$ provides much weaker diffusion. The advection term retains its classical first-order time dependence, while the absence of a source term ($f=0$) indicates that there is no absorption or release of substance during transport. The equation is given by: 
\begin{equation}
\left\{\begin{aligned}
    &\partial_t u= -\epsilon(-\Delta)^\frac{1}{2}u + \delta \partial_x u+ f(x,t),~~(x,t)\in \mathbb{T}_{2L}\times(0,T],\\
    &u(x,0) = u_0(x).
\end{aligned}\right.\label{diffusion_ux}
\end{equation}

To show the robustness of the preconditioned BVM, we take $\delta=0.2$ and $\epsilon=0.01$. In this setting, convection dominates the transport dynamics, while the half-diffusion term plays only a secondary role and provides weaker smoothing than the classical Laplacian $\Delta$. In advection-dominated regimes, standard central schemes may produce nonphysical oscillations near sharp fronts or discontinuities \cite{peclet}. In our computations, such oscillations appeared near the boundary as the final time increased, but they disappeared quickly as the spatial grid was refined. As in the previous two simulations, we use the same initial condition $u_0(x)$. Figure~\ref{sol_ux} shows the computed evolution of \eqref{diffusion_ux} without the source term $f(x,t)$; the numerical solution remains smooth and free of visible oscillations. In contrast to the two previous models, whose eigenvalues lie on the real axis, the eigenvalues of the matrix $\mathbf{D}$ associated with \eqref{diffusion_ux} spread along an $\infty$-shaped pattern, which is consistent with the analysis in Section~4.1, see Figure~\ref{eig_ux}. This spectral distribution further helps explain why many LMMs do not provide unconditional stability for this problem.
 
 Setting $f(x,t)=-\frac{\sin t}{(1+x^2)^2}-
\epsilon\cos t \frac{6x^2+x^4-3}{2(1+x^2)^3}$, $u(x,t) = \frac{\cos(t)}{(1+x^2)^2}$ is an exact solution. We provide the numerical convergence of the standard and the preconditioned BVM in the Figure \ref{converge_ux}, the performance of preconditioned and standard BVM are compared as well. Since the coefficient matrix 
$\mathbf{D}$ is highly ill-conditioned (for example, the condition number of $\mathbf{D}$ is $1.25\times10^{18}$ approximately when $(h,\tau)=(L/127,T/62)$), both the standard BVM and the preconditioned BVM would suffer from numerical deterioration. Under such circumstances, a smaller residual norm obtained by GMRES does not necessarily imply a smaller forward error; it merely reflects a better residual reduction for the discretized system. Overall, the preconditioned BVM tends to produce slightly more accurate solutions, but this improvement is accompanied by a higher computational cost, since the corresponding GMRES iteration does not converge rapidly in a small number of steps. Even so, for large-scale systems, the preconditioned BVM still exhibits better computational efficiency than the direct method, without any significant loss of accuracy.
 

\begin{figure}[htb] 
    \centering
    \includegraphics[width=0.45\linewidth]{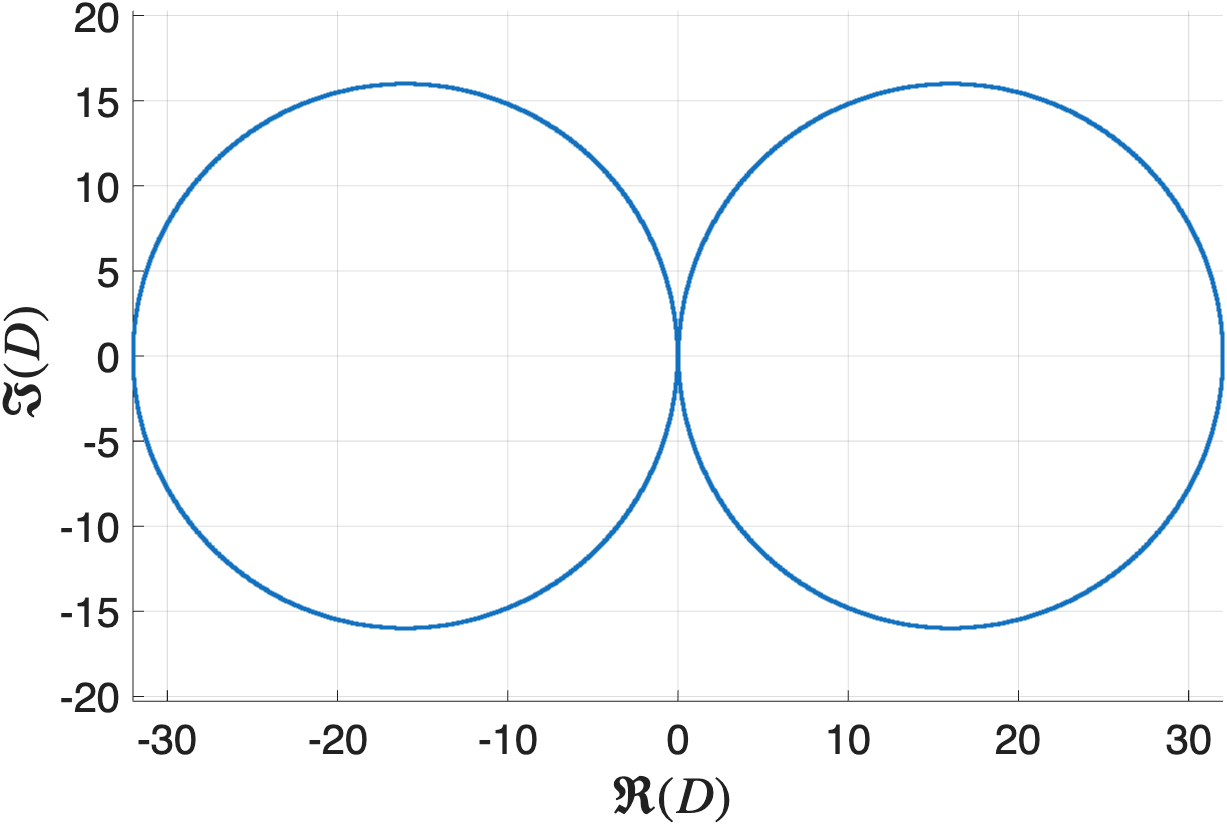}
    \caption{Distribution of the eigenvalues of the matrix $\mathbf{D}$, which is derived from the advection model (\ref{diffusion_ux}) with $\delta = 0.2$ and $\epsilon=0.01$.}
    \label{eig_ux}
\end{figure}

\begin{figure}[htb]
    \centering
    \begin{minipage}{0.45\linewidth}
        \includegraphics[width=\linewidth]{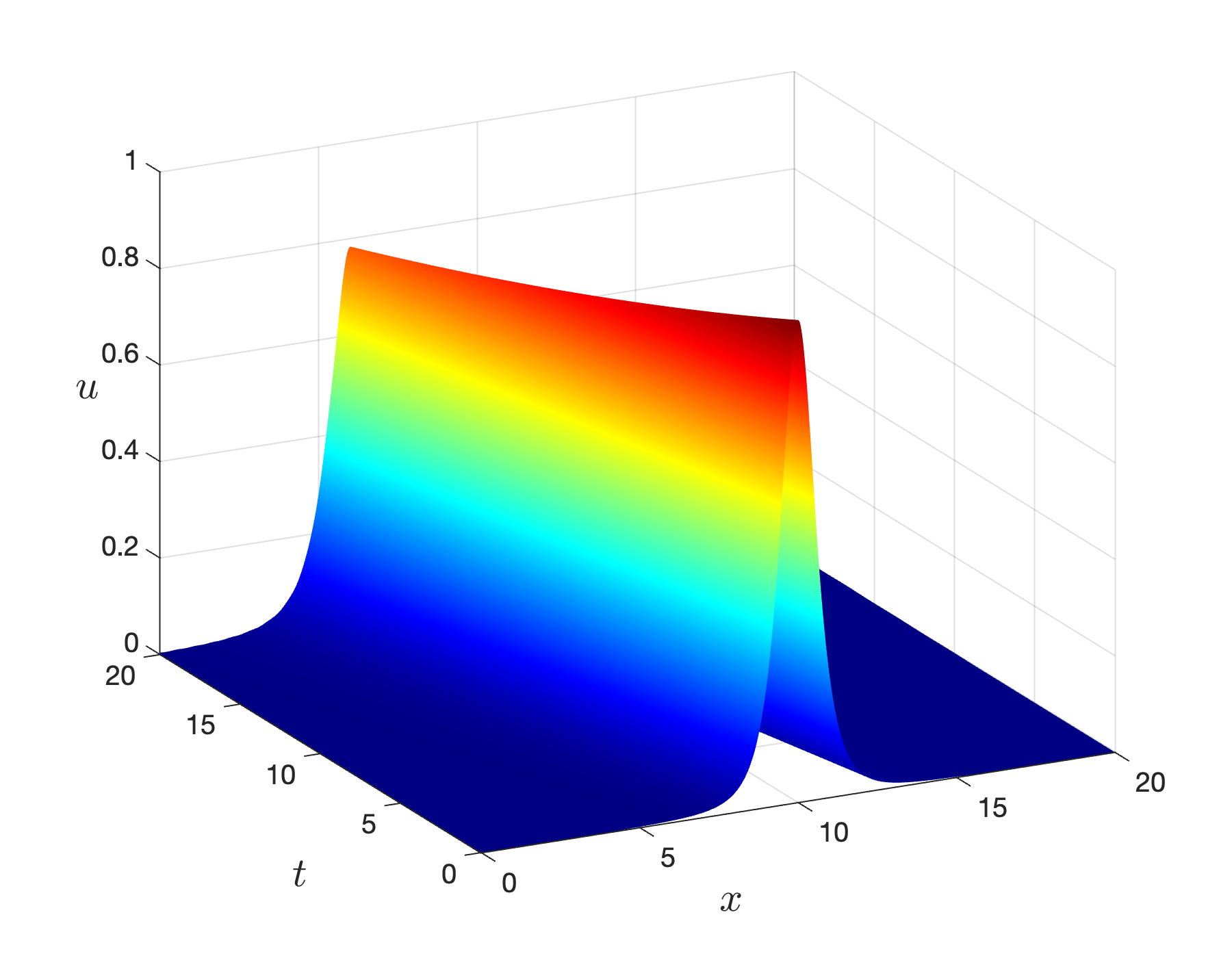}
    \end{minipage}
    \begin{minipage}{0.45\linewidth}
        \includegraphics[width=\linewidth]{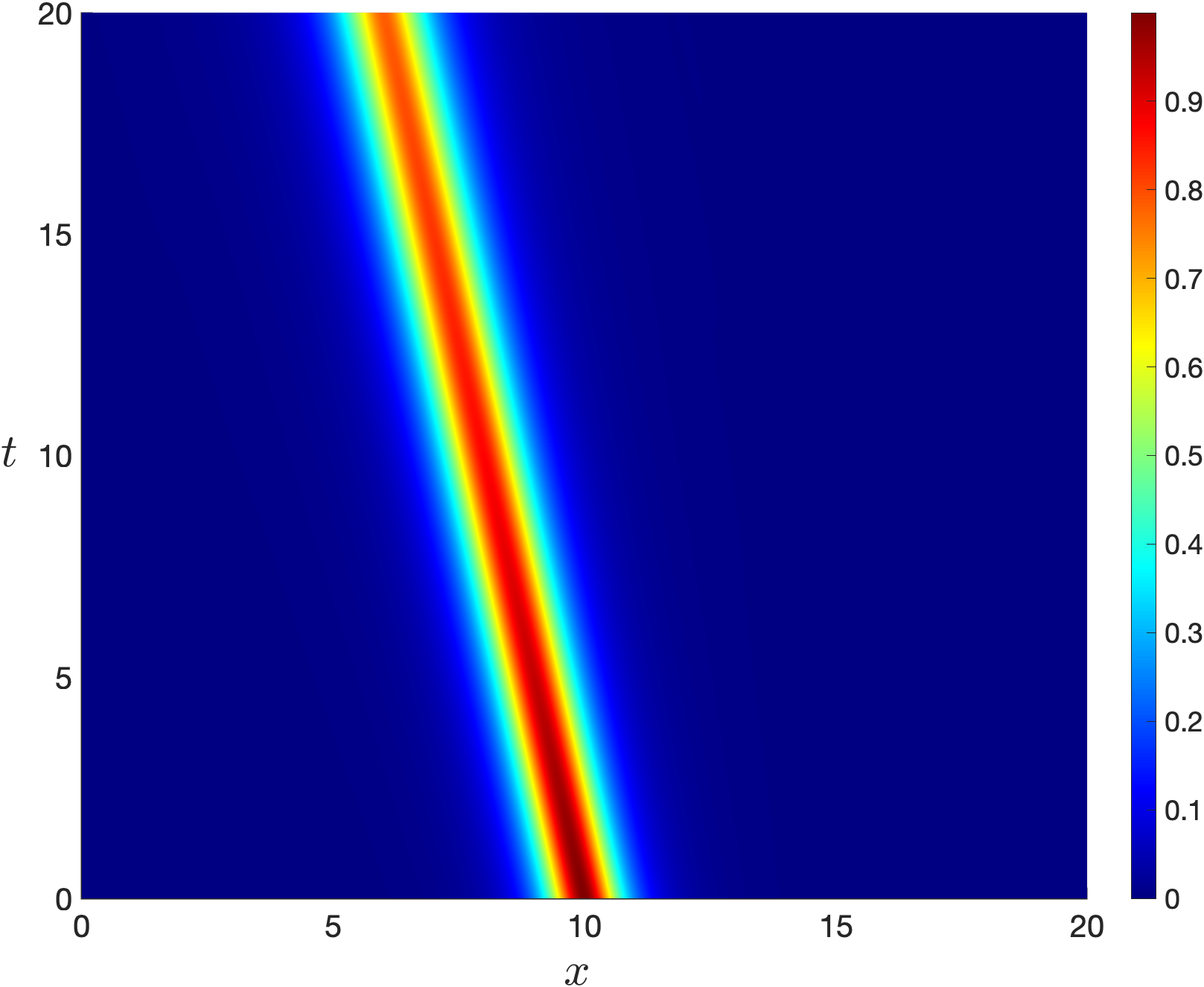}
    \end{minipage}
    \caption{The numerical solution of (\ref{diffusion_ux}) at time $T = 20$, computed with coefficients $\delta = 0.2,~\epsilon = 0.01$, using the preconditioned BVM with $\tau=0.0391$ and $h=0.0098$.}
    \label{sol_ux}
\end{figure}

\begin{figure}[htb]
    \centering    \begin{minipage}{0.45\linewidth}
        \includegraphics[width=\linewidth]{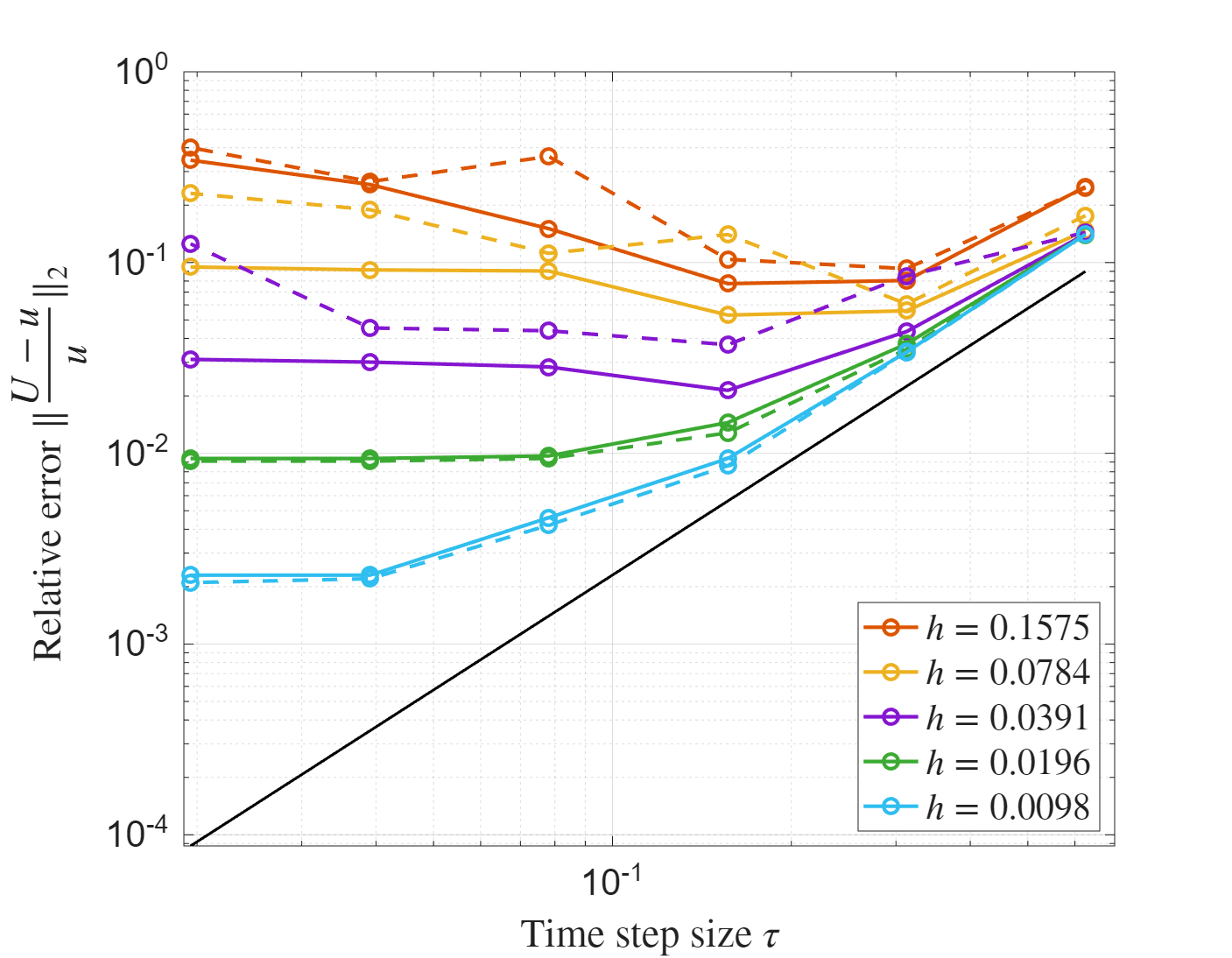}
    \end{minipage}
    \begin{minipage}{0.45\linewidth}
        \includegraphics[width=\linewidth]{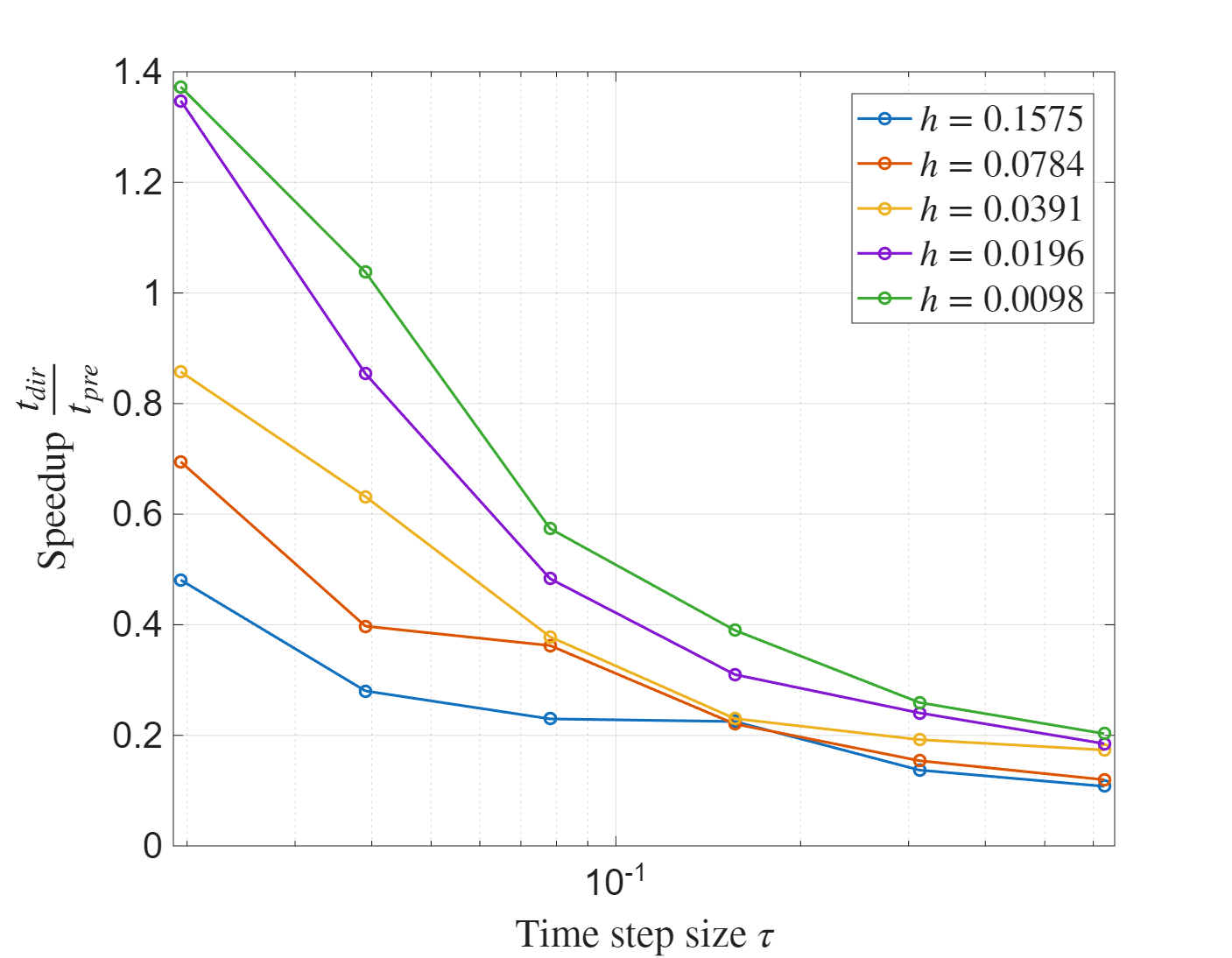}
    \end{minipage}
       \caption{Comparison at \(T=20\) for (\ref{diffusion_u}).
Left: relative \(L^2\)-errors versus \(\tau\) for the preconditioned BVM (solid lines) and the standard BVM (dashed lines) with different spatial steps $h=\frac{L}{127},\frac{L}{255},\frac{L}{511},\frac{L}{1023},\frac{L}{2047}$.
Right: speedup ratio for the same spatial step sizes \(h\).}
    \label{converge_ux}
\end{figure}

Using the same coefficients $\delta$ and $\epsilon$, we also present a numerical example in which neither the initial condition nor the source term admits a closed-form Hilbert transform. This example shows that the preconditioned BVM remains accurate and effective even when the Hilbert transforms are not available analytically. In this case, $\mathcal{H}(u_0)(x)$ and $\mathcal{H}(f)(x,t)$ are approximated by formula \eqref{compute_H}, and the corresponding numerical results are displayed in Figure~\ref{sol_ux2}. $u_0(x)=\frac{\exp(-(x-2)^4)}{1+(x-2)^2}$ and $f(x,t) = -\cos(t)\lb\frac{\exp(-(x-2)^4}{1+(x-2)^2}+\frac{\exp(-(x+2)^4}{1+(x+2)^2}\rb$ are adopted. 

\begin{figure}[htb]
    \centering
    \begin{minipage}{0.45\linewidth}
        \includegraphics[width=\linewidth]{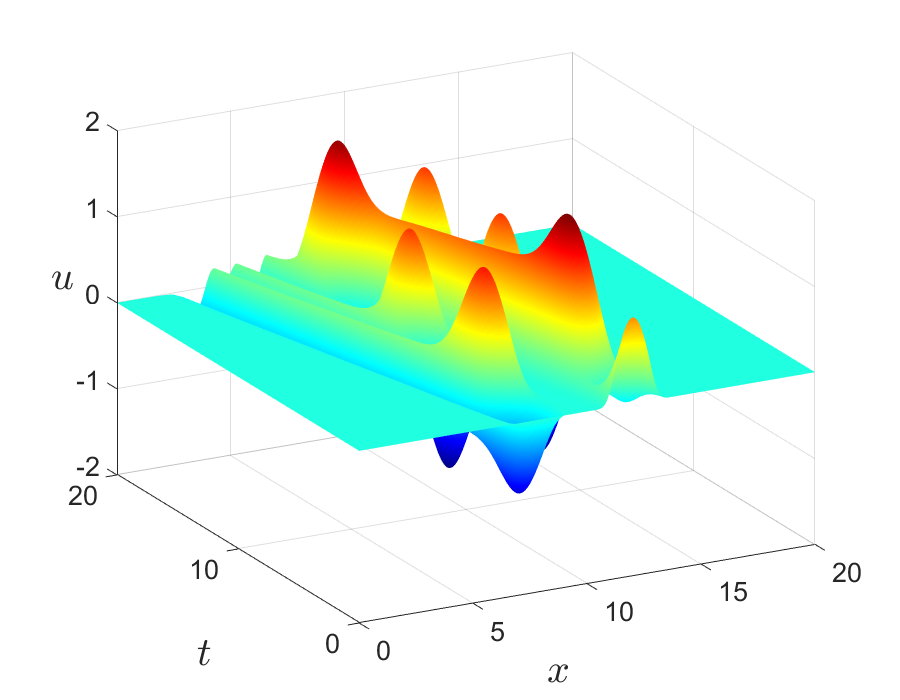}
    \end{minipage}
    \begin{minipage}{0.45\linewidth}
        \includegraphics[width=\linewidth]{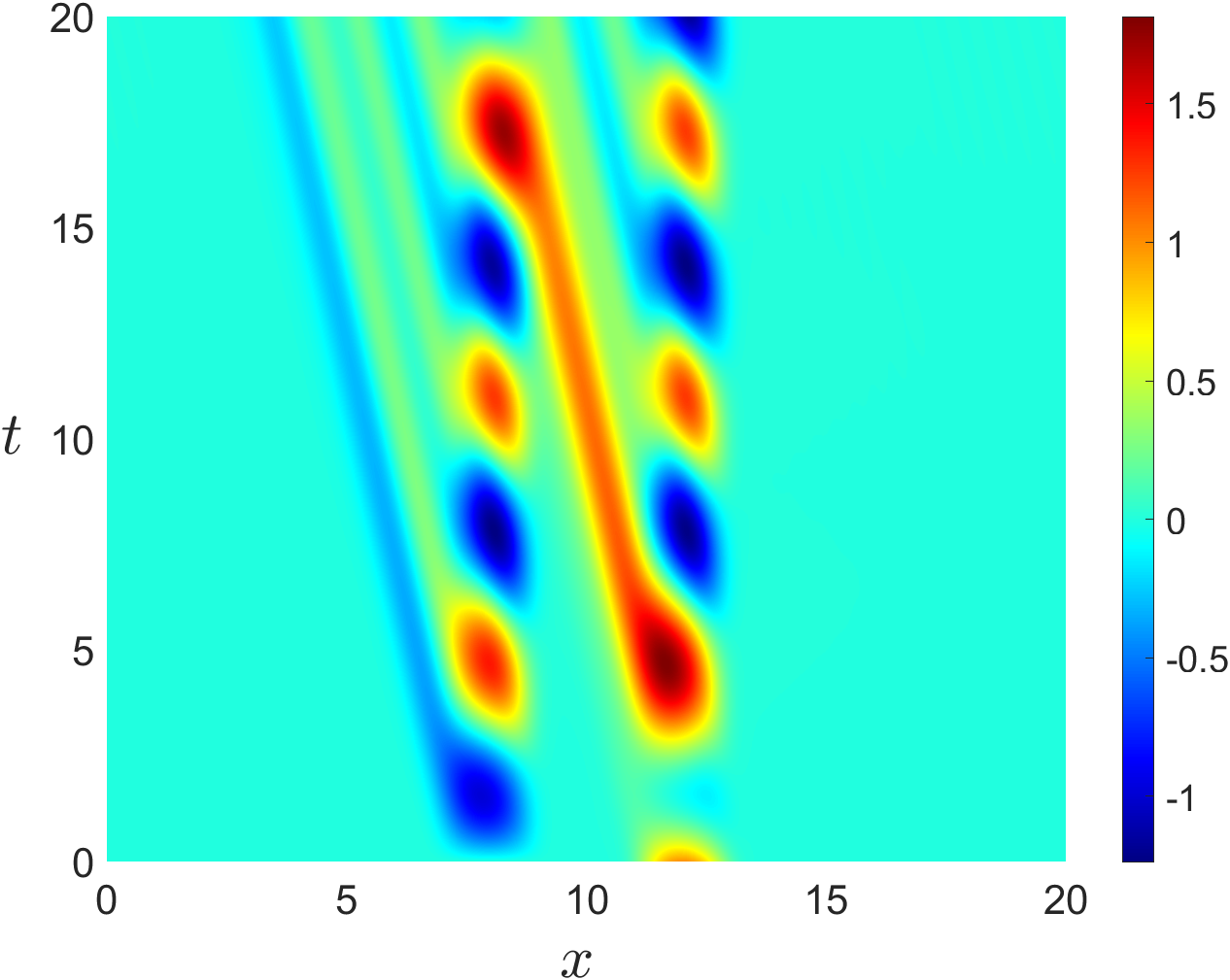}
    \end{minipage}
    \caption{Numerical solution of \eqref{diffusion_ux} at \(T=20\) with 
\(u_0(x)=\frac{e^{-(x-2)^4}}{1+(x-2)^2}\) and 
\(f(x,t)=-\cos t\bigl[\tfrac{e^{-(x-2)^4}}{1+(x-2)^2}+\tfrac{e^{-(x+2)^4}}{1+(x+2)^2}\bigr]\). 
Computed via preconditioned BVM with \(\delta=0.2\), \(\epsilon=0.01\), 
\(\tau=0.0391\), \(h=0.01953\), and relative \(L^2\)–error 0.0141.}
    \label{sol_ux2}
\end{figure}

\paragraph{Comparison to the standard transport equation $\partial_t u= \partial_x u$  ($\epsilon = 0$).}Here we should note that (\ref{diffusion_ux}) cannot degenerated as the normal transport equation $\partial_t u= \delta \partial_x u$ even if we set $\epsilon=0$. In fact, we have 
\begin{equation}
    \partial_t^2 u = -\delta^2 \Delta u - 2\delta \partial_t(\partial_x u) -\delta \partial_x f +\partial_t f.
\end{equation}
Therefore, we compare the numerical solution of (\ref{diffusion_ux}) with the transport equation for $\epsilon=0$ in Figure \ref{comapre_transport}. We take $\delta=1$ and run simulation for $t\in[0,2]$ and numerically confirmed \eqref{ds_2eqs} is compatible with the original problem \eqref{original_half}.
\begin{figure}[htb]
    \centering
    \begin{minipage}{0.45\linewidth} 
        \includegraphics[width=\linewidth]{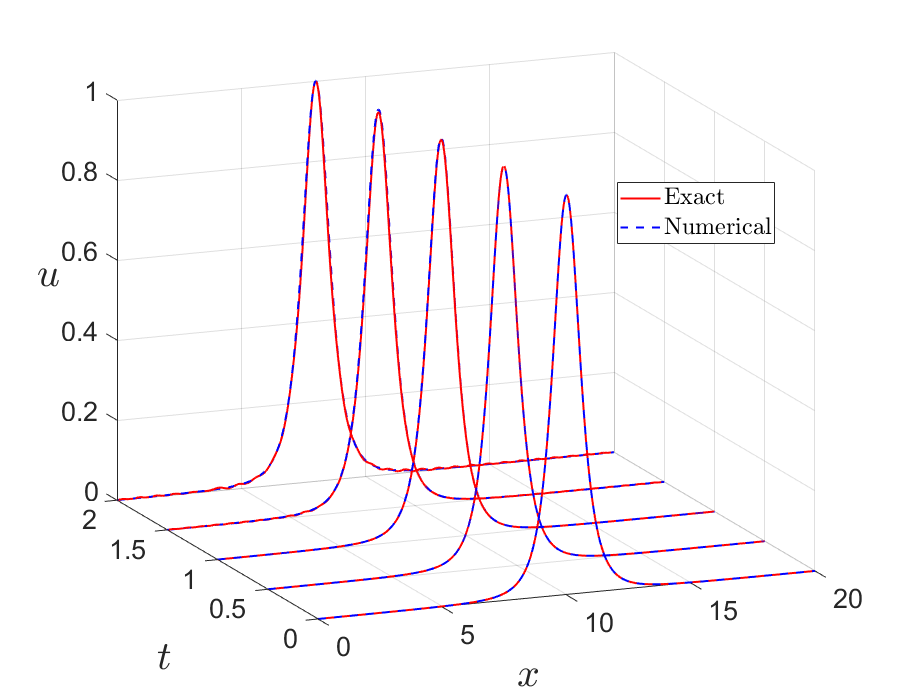}
    \end{minipage}
    \caption{Numerical comparison between zero-diffusion of (\ref{diffusion_ux}) and the standard transport equation $\partial_t u= \partial_x u$, using $h = \tau = 0.0391$ and initial condition $u_0(x) = \frac{1}{(1 + x^2)^2}$. }
    \label{comapre_transport}
\end{figure}

\section{A nonlocal Schrödinger equation}
\label{sec6}
As a special case we mentioned in section 3.3, $\epsilon$ is pure imaginary will lead a Schrödinger equation with the half-Laplacian. Suppose $\gamma = -\mathrm{i}\epsilon$ for $\epsilon\in\RR^+$, we consider 
\begin{equation}
    \left\{\begin{aligned}
   & \mathrm{i}~\partial_t u= \gamma(-\Delta)^\frac{1}{2}u + Vu,\qquad (x,t) \in \RR\times \RR^+,~ \gamma \in \RR,\\
   & u(x,0) = u_0(x),
\end{aligned}\right.\label{schrodinger}
\end{equation}
where $V$ is the potential term independent with $x$ and $t$. Here, we consider the case that $u\in H^1(\mathbb{T}_{2L})$ and $u\in H^1(\RR)$.
Physically, the propagator is unitary on $L^{2}(\mathbb{R})$ and the mass is conserved
\[
\|u(t)\|_{L^{2}(\mathbb{R})}=\|u_{0}\|_{L^{2}(\mathbb{R})}
\]
for $t\ge 0$.
The torus $\mathbb{T}_{2L}$ models a closed system on a ring of length $2L$.
There is no boundary but finite-size effects such as wrap-around and interference may arise over long times
due to periodicity and the nonlocal character of $(-\Delta)^{\frac12}$.  In fact, if $u\in H^1(\RR)$ then $\partial_x u\in L^2(\RR)$, and since $\Hh$ is bounded on $L^2(\RR)$,
it follows that $(-\Delta)^{\frac12}u\in L^2(\RR)$ \cite{ten_laplacian}. Therefore the right-hand side of \eqref{schrodinger} is well-defined in $L^{2}(\mathbb{R})$
and the Cauchy problem can be posed in $H^{1}(\mathbb{R})$.
A similar analysis applies to $H^{1}(\mathbb{T}_{2L})$.
In Theorem~\ref{wpd_srd} we prove well-posedness for \eqref{schrodinger} by first applying the doubling procedure and obtaining an equivalent wave system.

\begin{theorem}\label{wpd_srd}
    Suppose \(v_0, w_0 \in H^1(\RR) \text{ (or }H^1(\mathbb{T}_{2L}))\). Let $\widetilde{u} = e^{\mathrm{i}Vt}u$, then the solution to the following wave equation
    \begin{equation}
    \left\{\begin{aligned}
  & \widetilde{u}''=\gamma^2\partial_x ^2\widetilde{u},~~~~ (x,t) \in \RR\times \RR^+\\
  &  \widetilde{u}(x,0) = u_0(x),\widetilde{u}'(x,0) = -\mathrm{i}\gamma \mathcal{H}(\partial_x u_0(x))
\end{aligned}\right.\label{D_schrodinger}
\end{equation}
    is also solves (\ref{schrodinger}), that is,
    \begin{equation}
        u(x,t) = \frac{e^{\mathrm{-i}Vt}}{2}[u_0(x+\gamma t)+u_0(x-\gamma t)-\mathrm{i}~ \mathcal{H}(u_0(x+\gamma t))+\mathrm{i}~ \mathcal{H}(u_0(x-\gamma t))]\label{sol_schrodinger}
    \end{equation}
\end{theorem}

\begin{proof}
As similar in Appendix B.2, by letting $u = e^{-\mathrm{i}Vt}\widetilde{u}$ to cancel the term $Vu$ in (\ref{schrodinger}). We then use the SD procedure of (\ref{schrodinger}) and arrive at the wave system \eqref{D_schrodinger}. From \cite{acp_book}, the solution to \eqref{sol_schrodinger} is uniquely existed, i.e., $\tilde{u}\in C\big([0,T];H^2\big)\cap C^1\big([0,T];H^1\big)\cap C^2\big([0,T];L^2\big).$ Specificlly, writing $\widetilde{u}=v+\mathrm{i} w$ and \( u_0 = v_0+ \mathrm{i} w_0\) we arrive at two real wave system with respect to $v,w$. By using the identity $\mathcal{H}(\partial_x u_0)=\partial_x \mathcal{H}(u_0)$ along with d'Alembert's formula, we obtain the solution to these two system
\begin{align}
     &   v(x,t) = \frac{1}{2}\left(v_0(x+\gamma t)+v_0(x-\gamma t)\right) +\frac{1}{2}\left(\mathcal{H}\left(w_0(x+\gamma t)\right)-\mathcal{H}\left(w_0(x-\gamma t)\right)\right),\tag{6.5a}\label{v}\\
      &  w(x,t) = \frac{1}{2}\left(w_0(x+\gamma t)+w_0(x-\gamma t)\right) -\frac{1}{2}\left(\mathcal{H}\left(v_0(x+\gamma t)\right)-\mathcal{H}\left(v_0(x-\gamma t)\right)\right),\tag{6.5b}\label{w}
\end{align}
which directly give the solution of (\ref{D_schrodinger}):
\[\widetilde{u}(x,t) = \frac{1}{2}[u_0(x+\gamma t)+u_0(x-\gamma t)-\mathrm{i} \mathcal{H}(u_0(x+\gamma t))+\mathrm{i} \mathcal{H}(u_0(x-\gamma t))].\]
Finally multiplying by the factor $e^{-\mathrm{i}Vt}$ we obtain (\ref{sol_schrodinger}). Then we claim that the solution of (\ref{D_schrodinger}) also solves (\ref{schrodinger}). Indeed, for the equivalent Cauchy problem

\begin{equation}\label{eq:free_half_sch_proof}
i \partial_t w=\gamma(-\Delta)^{\frac12}w,\qquad w(x,0)=u_0(x),
\end{equation}
We take the Fourier transform in $x$ with $\widehat f(\xi)=\int_{\mathbb R}e^{-ix\xi}f(x) dx$ gives
\[
i \partial_t \widehat w(\xi,t)=\gamma|\xi| \widehat w(\xi,t),
\]
hence
\[
\widehat w(\xi,t)=e^{-i\gamma|\xi|t}\widehat u_0(\xi)
=
e^{-i\gamma\xi t}\mathbf 1_{\{\xi>0\}}\widehat u_0(\xi)
+
e^{ i\gamma\xi t}\mathbf 1_{\{\xi<0\}}\widehat u_0(\xi).
\]
From $\widehat{\mathcal H f}(\xi)= -i \mathrm{sgn}(\xi) \widehat f(\xi)$ we define the frequency projections
\[
\mathcal{P}^+:=\frac12(I+i\mathcal H),\qquad \mathcal{P}^-:=\frac12(I-i\mathcal H).
\]
Then $\widehat{\mathcal{P}^+f}=\mathbf 1_{\{\xi>0\}}\hat f$ and $\widehat{\mathcal{P}^-f}=\mathbf 1_{\{\xi<0\}}\hat f$, so with
$u_0^\pm:=\mathcal{P}^\pm u_0$ we can rewrite
\[
\widehat w(\xi,t)=e^{-i\gamma\xi t}\widehat{u_0^+}(\xi)+e^{ i\gamma\xi t}\widehat{u_0^-}(\xi),
\]
By the Fourier shift property,
$\mathcal F^{-1}(e^{-i\xi a}\hat f)(x)=f(x-a)$, it follows that
\begin{equation}
w(x,t)=u_0^+(x-\gamma t)+u_0^-(x+\gamma t).\label{sdg_solution}
\end{equation}
Substituting $u_0^\pm$ into \eqref{sdg_solution} and multiplying by $e^{-\mathrm{i}Vt}$, we obtain \eqref{sol_schrodinger}.
\end{proof}

\begin{remark}
The formula \eqref{sol_schrodinger} shows that in both models the solution is a superposition of two profiles translated at speeds $\pm\gamma$. In practice, if $u_0$ is well localized and supported away from the periodic junction where the endpoints are identified, and if $L$ is sufficiently large, then the Hilbert transform of $u_0$ is nearly the same on any compact subset of $(0,2L)$ whether $u_0$ is regarded as an element of $H^1(\mathbb{T}_{2L})$ or of $H^1(\mathbb{R})$.
Consequently the two evolutions are nearly indistinguishable up to the wrap-around time scale, namely until the shifts $x\pm\gamma t$ reach the junction modulo $2L$.
For larger times the periodic model exhibits wrap-around and self-interference, whereas the whole-line model continues to propagate on $\mathbb{R}$ without such effects.

\end{remark}

\begin{corollary}
    \(v_0, w_0 \in H^2(\mathbb{T}_{2L}) \), then (\ref{sol_schrodinger}) is equivalent to
    \begin{equation}
        u(x,t) = \frac{2}{L}\sum^\infty_{n=0}C_n\sin\left(\frac{\pi n x}{L}\right)e^{\mathrm{i} (\gamma\pi n/L-V) t},
    \end{equation}
    where
    \(C_n=\int^L_0 u_0(\xi)\sin\left(\frac{\pi n \xi}{L}\right)d\xi.\)
\end{corollary}
We present two numerical examples of (\ref{schrodinger}) in Figure \ref{fig:schrodinger} with $\epsilon=0.1$ and initial condition
\[u_0(x)= \frac{2}{1+(x+8)^2} - \frac{5i}{1+(x-8)^2}.\]
The solution is computed over the domain $x\in(0,48)$ from $T=0$ to $T=20$, comparing a zero potential and a constant potential $V=1$.
\begin{figure}[htb]
    \centering
    \begin{minipage}{0.4\linewidth}
        \includegraphics[width=\linewidth]{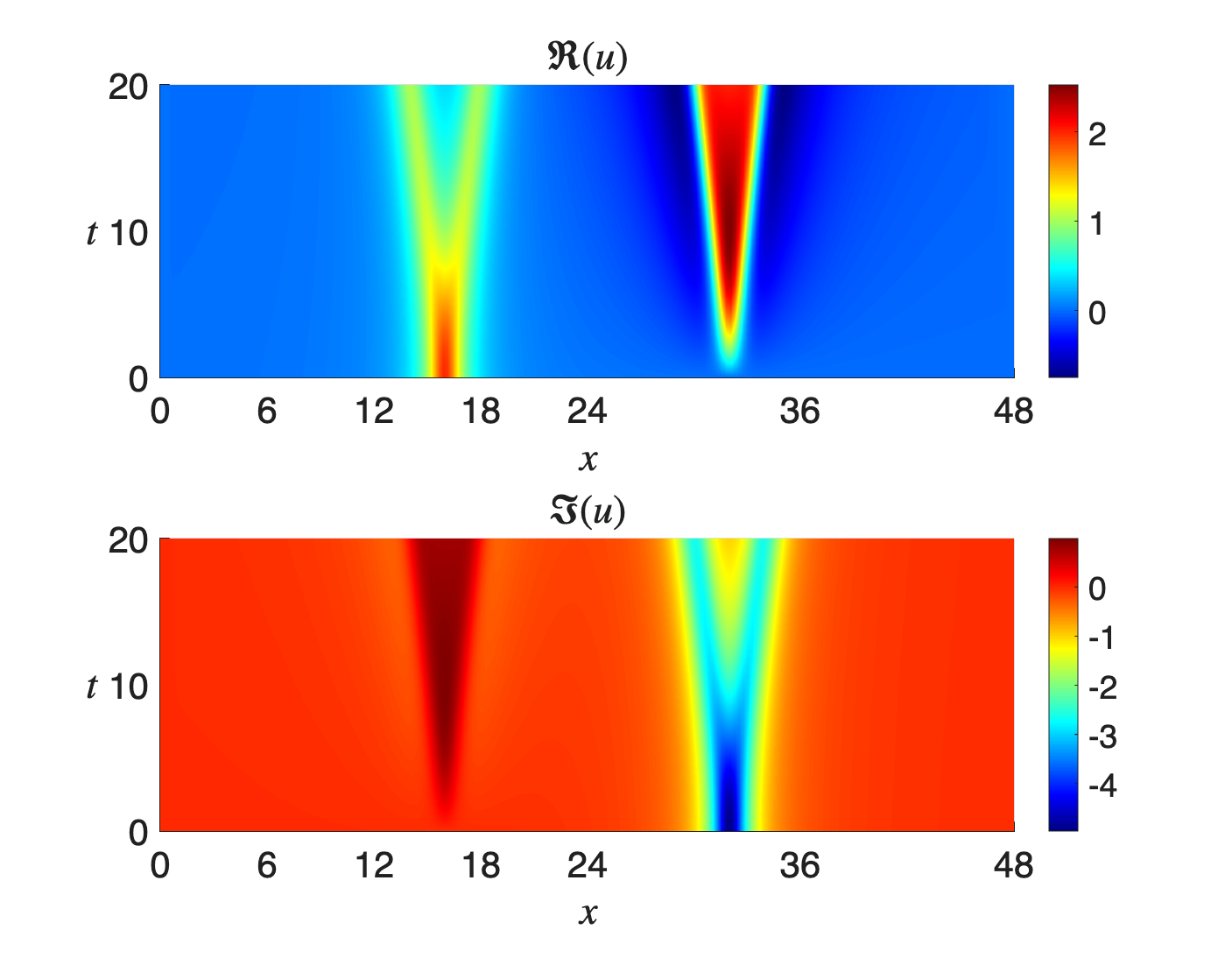}
    \end{minipage}
    \quad
    \begin{minipage}{0.4\linewidth}
        \includegraphics[width=\linewidth]{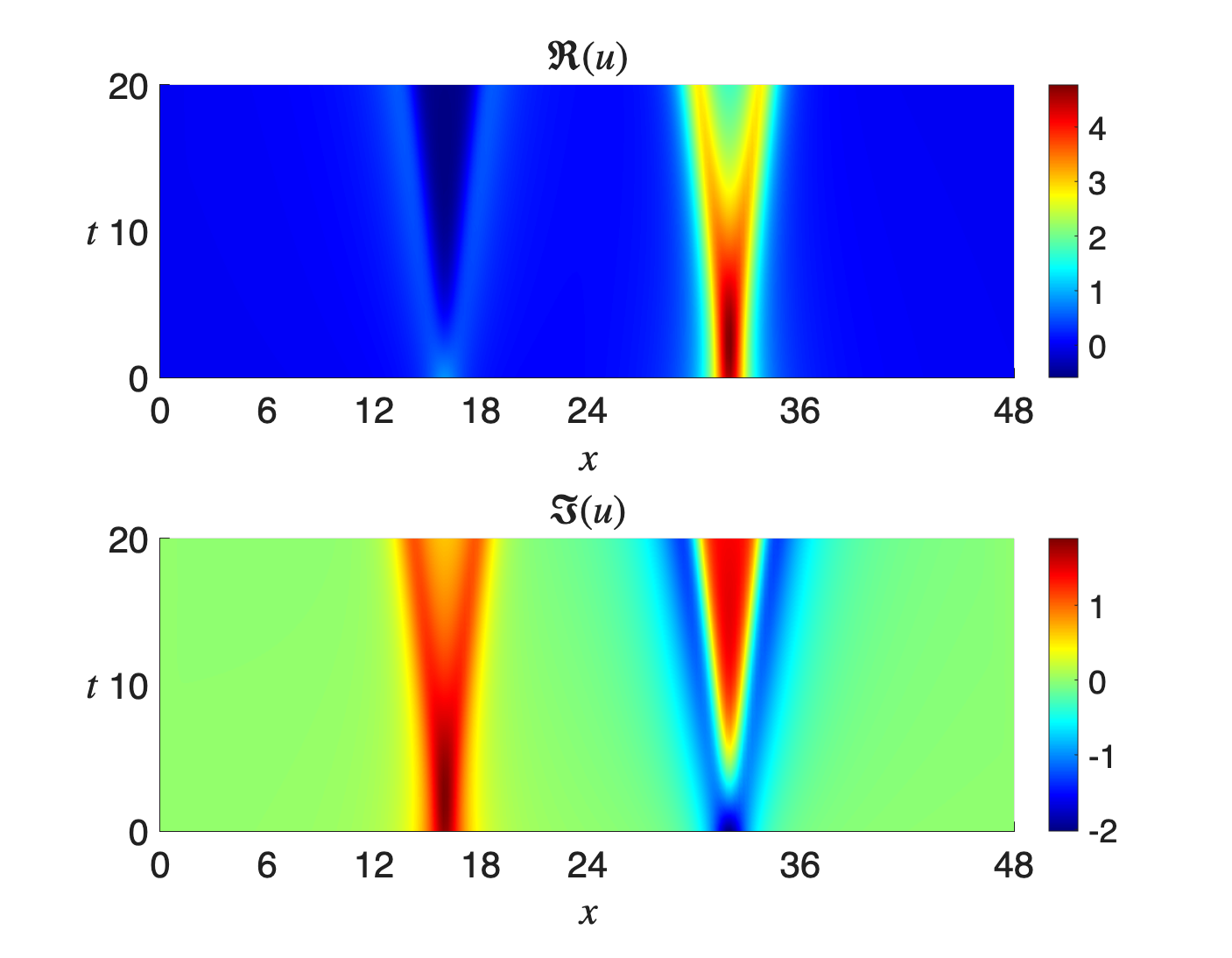}
    \end{minipage}
\caption{Solution for $\epsilon=0.1$ on $x\in(0,48)$ for $t\in[0,20]$. Left: No potential ($V=0$). Right: Constant potential ($V=1$).}
    \label{fig:schrodinger}
\end{figure}

\section{Conclusion}
\label{sec7}
In this paper, we introduced a Spectrum Doubling reformulation for a class of evolutionary PDEs involving the half-Laplacian. By exploiting the identity between the half-Laplacian and the Hilbert transform, the original first-order half-diffusion equation was recast as an equivalent doubled system. This reformulation reveals a  spectral splitting into stable and unstable branches. We showed that, under a suitable compatibility condition on the initial condition, the original half-diffusion dynamics is recovered precisely on a stable invariant subspace of the doubled system. In this sense, the SD framework provides not only an alternative representation of the problem, but also a clearer structural interpretation of the underlying dynamics. On the numerical side, the SD reformulation converts the evolution problem into an integer-order system, which makes it possible to apply time discretization techniques developed for classical PDEs. Since standard time integrators may suffer from stability difficulties for the resulting backward wave-type structure, we adopted Boundary Value Methods and established second-order temporal convergence together with eigenvalue-based stability properties. For the large Kronecker-structured linear systems arising from the full discretization, we further employed preconditioned iterative solvers, leading to an efficient implementation. The numerical experiments confirmed the theoretical analysis and demonstrated that the proposed approach provides accurate approximations for several representative half-diffusion models.

The present work suggests that reformulation-based approaches of this type may be useful for fractional evolutionary equations in the low-order regime, where nonlocality and singular behavior often make both the analysis and the numerical treatment considerably more delicate than in the classical or higher-order diffusive setting. A natural direction for future research is therefore to investigate whether the SD viewpoint can be extended beyond the half-Laplacian to more general operators of fractional order, as well as to higher-dimensional problems and more involved models such as fractional Schr\"odinger-type equations and fractional dispersion equations.
\subsection*{Funding}
This publication is supported by China Scholarship Council (Grant No. 202207720037).

\subsection*{Author Contributions Statement}
P.Yuan and P.Zegeling designed the study and developed the methodology. P.Yuan carried out the theoretical analysis and performed the numerical simulations. P.Yuan and XM.Gu contributed to the interpretation of results and validation. P.Yuan, P.Zegeling and XM.Gu wrote the main manuscript text. All authors discussed the results and reviewed the manuscript.

\begin{appendices}

\section{Some Functions and their Hilbert Transforms}\label{App_a}
This section lists several common functions together with their exact Hilbert transforms for quick reference.
\begin{table}[H]
    \centering
    \caption{Examples of functions and their Hilbert transforms}\label{table_hilbert_transform}%
    \begin{tabular}{@{}cc@{}}
        \toprule
        Function \( f(x),\ \Re(\alpha)>0 \) & Hilbert Transform \( \mathcal{H}(f(x)) \) \\ 
        \midrule
        \( \frac{1}{1+x^2} \) & \( \frac{x}{1+x^2} \) \\ 
        \( \frac{1}{1+x^4} \) & \( \frac{x(x^2+1)}{\sqrt{2}(x^4+1)} \) \\ 
        \( \frac{1}{(1+x^2)^2} \) & \( \frac{x(x^2+3)}{2(1+x^2)^2} \) \\ 
        \( e^{-\alpha x^2} \) & \( -\text{erf}(\mathrm{i}\sqrt{\alpha}x)e^{-\mathrm{i}\alpha x^2} \) \\ 
        \( \cos(\alpha x) \) & \( \sin(\alpha x) \) \\ 
        \( \sin(\alpha x) \) & \( -\cos(\alpha x) \) \\ 
        \( \frac{x}{x^2+\alpha^2} \) & \( -\frac{\alpha}{x^2+\alpha^2} \) \\
        \bottomrule
    \end{tabular}
\end{table}

\section{Analytical solutions for selected cases of \eqref{original_half}}
Appendix B provides explicit solutions for \eqref{original_half} on $\mathbb{R}$ for three specific forms of $\mathcal{L}$. By applying the spatial Fourier transform, we decouple the governing equation into a family of ODEs, yielding closed-form representations via Fourier multipliers. For the $2L$-periodic case, the same derivation holds by replacing the Fourier transform with Fourier series.

\subsection{The inhomogeneous half-diffusion equation: $\mathcal{L}=0$}

In Section 3, we already showed that the solution to the homogeneous equation corresponding to \eqref{original_half} with $\mathcal{L}=0$ and $f=0$ is
\[
u(x,t)=\frac1{2\pi}\int_{\mathbb R}\widehat u_0(\xi) e^{i\xi x}e^{-\epsilon |\xi|t} d\xi,
\qquad (x,t)\in\mathbb R\times[0,T],
\]
where
\[
\widehat u_0(\xi)=\int_{\mathbb R}u_0(y)e^{-i\xi y} dy.
\]
We now consider the inhomogeneous problem, namely $f\neq 0$. Taking the Fourier transform with respect to $x$, and applying the method of variation of constants, we obtain
\[
\widehat u(\xi,t)
=e^{-\epsilon |\xi|t}\widehat u_0(\xi)
+\int_0^t e^{-\epsilon |\xi|(t-s)}\widehat f(\xi,s) ds.
\]
Applying the inverse Fourier transform yields
\begin{equation}
\label{mild_half_solution}
u(x,t)
=\frac1{2\pi}\int_{\mathbb R}e^{i\xi x}e^{-\epsilon |\xi|t}\widehat u_0(\xi) d\xi
+\frac1{2\pi}\int_{\mathbb R}\int_0^t
e^{i\xi x}e^{-\epsilon |\xi|(t-s)}\widehat f(\xi,s) ds d\xi.
\end{equation}

As for the periodic case $u\in\mathbb{T}_{2L}$, it can be regarded as the discrete spectral counterpart of the whole-line problem, obtained by restricting the continuous spectrum to the admissible periodic frequencies. The same argument applies with the Fourier transform replaced by the Fourier series, so that the continuous frequency variable \(\xi\in\mathbb R\) is replaced by the discrete modes \(\xi_n=\frac{n\pi}{L}\), \(n\in\mathbb Z\). Thus, the solution for $\mathcal{L}=0$ to \eqref{original_half} with $2L-$periodic boundary condition is
\begin{equation}
\label{periodic_mild_solution}
u(x,t)
=
\sum_{n\in\mathbb Z}
\left(
e^{-\epsilon |n|\pi t/L}\widehat u_n(0)
+
\int_0^t e^{-\epsilon |n|\pi (t-s)/L}\widehat f_n(s) ds
\right)e^{i n\pi x/L},
\end{equation}
where $\widehat u_n(t)$ and $\widehat f_n(t)$ are defined via $\widehat \varphi_n(t)=\frac1{2L}\int_{-L}^{L}\varphi(x,t)e^{-in\pi x/L} dx$. In particular, suppose that $u_0$ and $f(\cdot,t)$, originally given on $(0,L)$, are extended to $(-L,L)$ as odd $2L$-periodic functions. Since the multiplier $e^{-\epsilon |n|\pi t/L}$ is even in $n$, the odd subspace is invariant under the evolution. Thus, \eqref{periodic_mild_solution} reduces to a sine-series expansion,
\begin{equation}
\label{classical_sol_0}
u(x,t)
=
\int_0^L u_0(\xi)G(x,\xi,t) d\xi
+
\int_0^t\int_0^L f(\xi,\tau)G(x,\xi,t-\tau) d\xi d\tau,
\end{equation}
with the kernel $G(x,\xi,t)=\frac{2}{L}\sum_{n=1}^{\infty}e^{-\epsilon n\pi t/L}\sin\!\left(\frac{n\pi x}{L}\right)\sin\!\left(\frac{n\pi \xi}{L}\right)$.

\subsection{The inhomogeneous reaction equation with half-diffusion: $\mathcal{L}=\delta\mathcal{I}$} Let $u(x,t)=e^{\delta t}v(x,t)$, then \(v\) satisfies the inhomogeneous equation \[ \partial_t v=-\epsilon(-\Delta)^\frac{1}{2}v+e^{-\delta t}f(x,t), \qquad (x,t)\in\mathbb R\times(0,T], \] with $v(x,0)=u(x,0)=u_0(x).$ Proceeding as in Section B.1, we obtain \[ \widehat u(\xi,t) =e^{\delta t}\widehat v(\xi,t) =e^{(\delta-\epsilon |\xi|)t}\widehat u_0(\xi) +\int_0^t e^{(\delta-\epsilon |\xi|)(t-s)}\widehat f(\xi,s) ds. \] Therefore, the solution to \eqref{diffusion_u} is 
\begin{equation} \label{mild_delta} 
u(x,t) =\frac1{2\pi}\int_{\mathbb R}e^{i\xi x}e^{(\delta-\epsilon |\xi|)t}\widehat u_0(\xi) d\xi +\frac1{2\pi}\int_{\mathbb R}\int_0^t e^{i\xi x}e^{(\delta-\epsilon |\xi|)(t-s)}\widehat f(\xi,s) ds d\xi. 
\end{equation}

The solution for $\mathcal{L}=\delta \mathcal{I}$ to \eqref{original_half} with $2L-$periodic boundary condition is
\begin{equation}
\label{periodic_mild_delta}
u(x,t)
=
\sum_{n\in\mathbb Z}
\left(
e^{(\delta-\epsilon |n|\pi/L)t}\widehat u_n(0)
+
\int_0^t e^{(\delta-\epsilon |n|\pi/L)(t-s)}\widehat f_n(s)\,ds
\right)e^{in\pi x/L}.
\end{equation}
\subsection{The inhomogeneous advection equation with half-diffusion: $\mathcal{L}=\delta\partial_x$}

Since the operator \((-\Delta)^{\frac12}\) is translation-invariant on $\RR$, we consider solutions of the form \( w(x,t) = u(x-\delta t,t) \) to eliminate the advection. By the chain rule,
\[
\partial_t w(x,t)=\partial_t u(x-\delta t,t)-\delta \partial_x u(x-\delta t,t).
\]
Substituting this into
\[
\partial_t u(x,t)=-\epsilon(-\Delta)^\frac12 u(x,t)+\delta \partial_x u(x,t)+f(x,t),
\qquad (x,t)\in\mathbb R\times(0,T],
\]
we obtain the inhomogeneous equation
\[
\partial_t w(x,t)
=-\epsilon(-\Delta)^\frac12 w(x,t)+f(x-\delta t,t),
\]
with $w(x,0)=u(x,0)=u_0(x).$
Again proceeding as in Section B.1, we obtain
\begin{equation}
\label{mild_advection}
u(x,t)
=\frac1{2\pi}\int_{\mathbb R}e^{i\xi x}e^{(i\xi\delta-\epsilon |\xi|)t}\widehat u_0(\xi) d\xi
+\frac1{2\pi}\int_{\mathbb R}\int_0^t
e^{i\xi x}e^{(i\xi\delta-\epsilon |\xi|)(t-s)}\widehat f(\xi,s) ds d\xi.
\end{equation}

The solution for $\mathcal{L}=\delta \partial_x$ to \eqref{original_half} with $2L-$periodic boundary condition is
\begin{equation}
\label{periodic_mild_advection}
u(x,t)
=
\sum_{n\in\mathbb Z}
\left(
e^{\left(i\delta n\pi/L-\epsilon |n|\pi/L\right)t}\widehat u_n(0)
+
\int_0^t
e^{\left(i\delta n\pi/L-\epsilon |n|\pi/L\right)(t-s)}
\widehat f_n(s)\,ds
\right)e^{in\pi x/L}.
\end{equation}

\section{Practical solutions of Eq. (\ref{sec4_19}) }
\label{appendixC}
We first describe the solution of the auxiliary systems in the general case. For simplicity, we explain how to solve one of the auxiliary systems in \eqref{sec4_19}, omitting the subscript $j$:
\begin{equation}
\begin{bmatrix}
\lambda^\omega I  &   -I \\
   -P      & \lambda^\omega I - Q
\end{bmatrix}
\begin{bmatrix}
{\bm v}^{(1)}_2  \\
{\bm v}^{(2)}_2
\end{bmatrix}
=
\begin{bmatrix}
{\bm v}^{(1)}_1\\
{\bm v}^{(2)}_1
\end{bmatrix}.
\end{equation}
Eliminating ${\bm v}^{(2)}_2$ from the first block equation gives
\begin{equation}
{\bm v}^{(2)}_2 = \lambda^\omega {\bm v}^{(1)}_2 - {\bm v}^{(1)}_1.
\label{eqB2_new}
\end{equation}
Substituting \eqref{eqB2_new} into the second block equation yields
\[
\left[\lambda^\omega(\lambda^\omega I - Q) - P\right]{\bm v}^{(1)}_2
=
{\bm v}^{(2)}_1 + (\lambda^\omega I - Q){\bm v}^{(1)}_1.
\]
Thus one only needs to solve a reduced linear system of half the original size. In the nonperiodic case, this reduced system can be solved efficiently by sparse direct solvers, for example, MUMPS. Once ${\bm v}^{(1)}_2$ is obtained, ${\bm v}^{(2)}_2$ follows from \eqref{eqB2_new}, and hence
\[
\widetilde{{\bm v}}_2 =
\begin{bmatrix}
{\bm v}^{(1)}_2\\
{\bm v}^{(2)}_2
\end{bmatrix}
\]
is recovered.

\paragraph{Accelerated implementation in the case $PQ = QP$}
Since $P,Q$ are simultaneously diagonalizable, then there exists an orthogonal transform matrix $U$ such that
\[
P = U^{*}\Lambda_P U,
\qquad
Q = U^{*}\Lambda_Q U,
\]
where $\Lambda_P=\operatorname{diag}(p_1,\ldots,p_m)$ and $\Lambda_Q=\operatorname{diag}(q_1,\ldots,q_m)$ are diagonal. Then the block matrix $\mathbf{D}$ is block-diagonalized by $U$ into $m$ independent $2\times 2$ blocks
\[
\widehat{\mathbf{D}}_\ell=
\begin{bmatrix}
0 & 1\\
p_\ell & q_\ell
\end{bmatrix},
\qquad \ell=1,\ldots,m.
\]
Therefore, each system in \eqref{sec4_19},
\[
(\lambda_j^\omega I_m-\tau \mathbf{D})\widetilde{\bm v}_{2,j}
=
\widetilde{\bm v}_{1,j},
\]
can be further decoupled into $m$ independent $2\times 2$ systems
\[
(\lambda_j^\omega I_2-\tau \widehat{\mathbf{D}}_\ell)\widehat{\bm v}_{2,j,\ell}
=
\widehat{\bm v}_{1,j,\ell},
\qquad \ell=1,\ldots,m.
\]
Since
\[
\lambda_j^\omega I_2-\tau \widehat D_\ell
=
\begin{bmatrix}
\lambda_j^\omega & -\tau\\
-\tau p_\ell & \lambda_j^\omega-\tau q_\ell
\end{bmatrix},
\]
the corresponding solution is obtained explicitly by inverting these $2\times 2$ matrices mode by mode.

Typical examples include periodic boundary conditions, where $U$ is the discrete Fourier transform and the fast algorithm is FFT-based, and homogeneous Dirichlet-type settings, where $U$ is given by a DST. Therefore, the acceleration mechanism depends only on the boundary conditions through the transform that simultaneously diagonalizes $P$ and $Q$.

\end{appendices}
\bibliographystyle{IEEEtran}
\bibliography{ref}
\end{document}